\newtheorem{thm}{Theorem}[section]
\newtheorem{theorem}[thm]{Theorem}
\newtheorem{lemma}[thm]{Lemma}
\newtheorem{prop}[thm]{Proposition}
\newtheorem{cor}[thm]{Corollary}
\newtheorem{rmk}[thm]{Remark}
\newtheorem{exm}[thm]{Example}
\title[Stability of nets of quadrics in $\mathbb{P}^5$]
{Stability of nets of quadrics in $\mathbb{P}^5$ and associated discriminants}
\author{Sangho Byun}
\date{}
\address{Department of Mathematical Sciences, KAIST, 291 Daehak-ro, Yuseong-gu, Daejeon 305-701, Korea}
\email{capqus@kaist.ac.kr}
\subjclass[]{}
\begin{document}
\maketitle

\begin{abstract} Let $S$ be a complete intersection surface defined by a net $\Lambda$ of quadrics in $\mathbb P^5$. In this paper we analyze GIT stability of nets of quadrics in $\mathbb P^5$ up to projective equivalence, and discuss some connections between a net of quadrics and the associated discriminant sextic curve. In particular, we prove that if $S$ is normal and the discriminant $\Delta(S)$ of $S$ is stable then $\Lambda$ is stable. And we prove that if $S$ has the reduced discriminant and $\Delta(S)$ is stable then $\Lambda$ is stable. Moreover, we prove that if $S$ has simple singularities then $\Delta(S)$ has simple singularities.
\end{abstract}

\section{Introduction}

One of central theme in algebraic geometry is to construct compact moduli
spaces with geometric meaning. There are two early successes of the moduli theory - the
construction and compactification of the moduli spaces of curves $\overline{\mathcal{M}}_g$ and principally
polarized abelian varieties (ppavs) $\overline{\mathcal{A}}_g$. While
very few other examples are so well understood.

One of the oldest approach to moduli problems is Geometric Invariant Theory (GIT). The GIT stability analysis for pencils of quadrics appear in \cite{AM},\cite{AL} and \cite{MM}, it is shown that a pencil of quadrics in $\mathbb{P}^n$ is stable (resp. semistable) if and only if the associated discriminant binary $(n+1)$-form is non-zero and is GIT stable (resp. semistable) with respect to the natural $SL(2)$-action. So the GIT stability of a pencil of quadrics can be read off the GIT stability of the associated discriminant locus. But the GIT analysis for nets of quadrics turns out to be more involved. In particular, as Example~\ref{exm1} shows, there is no natural correspondence between $SL(6)$-stability of a net and $SL(3)$-stability of the associated discriminant curve. Moreover, the complete analysis of stable locus is complicated. For example, see \cite{FS} for discussion of GIT stability of net of quadrics in $\mathbb{P}^4$. However, we know the following facts: if a net defines a complete intersection surface with simple singularities then the net is stable with respect to $SL(6)$-action (\cite{OSS},\cite{Odaka1},\cite{Odaka2} and \cite{LiTian}). And if a net defines a smooth complete intersection surface, then the associated discriminant curve is stable with respect to $SL(3)$-action \cite{Be2}.

Firstly, we find GIT stability criterion of net $\Lambda$ of quadrics in $\mathbb{P}^5$ via studying special one-parameter subgroups. Our GIT analysis follows the method in Section 2 of \cite{FS}.

\begin{theorem}
(=Theorem ~\ref{1-PS})
Suppose that $\Lambda$ is stable with respect to every one-parameter subgroup of the following numerical types:
\begin{align*}
&{\rm(1)} \ \rho_1=(1,1,1,1,1,-5). && {\rm(8)} \ \rho_8=(4,1,1,-2,-2,-2). \\
&{\rm(2)} \ \rho_2=(1,1,1,1,-2,-2). && {\rm(9)} \ \rho_9=(3,1,1,-1,-1,-3). \\
&{\rm(3)} \ \rho_3=(1,1,1,-1,-1,-1). && {\rm(10)} \ \rho_{10}=(2,1,0,0,-1,-2). \\
&{\rm(4)} \ \rho_4=(2,2,-1,-1,-1,-1). && {\rm(11)} \ \rho_{11}=(5,5,-1,-1,-1,-7). \\
&{\rm(5)} \ \rho_5=(5,-1,-1,-1,-1,-1). && {\rm(12)} \ \rho_{12}=(1,1,0,0,-1,-1). \\
&{\rm(6)} \ \rho_6=(2,2,2,-1,-1,-4). && {\rm(13)} \ \rho_{13}=(5,3,1,-1,-3,-5). \\
&{\rm(7)} \ \rho_7=(7,1,1,1,-5,-5).
\end{align*}
Then $\Lambda$ is stable.
\end{theorem}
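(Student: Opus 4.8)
The plan is to apply the Hilbert--Mumford numerical criterion and to reduce the verification of stability, a priori a condition on the infinite family of all one-parameter subgroups (1-PS) of $SL(6)$, to the finite list above. A net $\Lambda$ is a point of $\mathrm{Gr}(3,\mathrm{Sym}^2 V^\ast)$ with $V=\mathbb{C}^6$, embedded by Pl\"ucker into $\mathbb{P}(\wedge^3 \mathrm{Sym}^2 V^\ast)$; by Hilbert--Mumford, $\Lambda$ is stable iff $\mu(\Lambda,\rho)>0$ for every nontrivial 1-PS $\rho$. First I would conjugate $\rho$ into the diagonal maximal torus and use the Weyl group $S_6$ together with scaling to assume the weights satisfy $r_0\ge r_1\ge\cdots\ge r_5$ and $\sum_i r_i=0$, so that $\rho$ ranges over the rational points of the closed positive Weyl chamber $C$, a polyhedral cone.

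Next I would compute $\mu(\Lambda,\rho)$ combinatorially. The monomial $x_ix_j$ is a $\rho$-eigenvector of weight $r_i+r_j$, so a basis vector $m_I=x_{i_1}x_{j_1}\wedge x_{i_2}x_{j_2}\wedge x_{i_3}x_{j_3}$ of $\wedge^3\mathrm{Sym}^2V^\ast$ has weight $\sum_k (r_{i_k}+r_{j_k})$, and $\mu(\Lambda,\rho)=-\min\{\,\mathrm{wt}(m_I):c_I(\Lambda)\ne0\,\}$, where the $c_I(\Lambda)$ are the Pl\"ucker coordinates. Thus $\rho\mapsto\mu(\Lambda,\rho)$ is piecewise linear and convex, and its sign on all of $C$ is governed by the combinatorial \emph{state} of $\Lambda$, that is, by which Pl\"ucker coordinates survive, together with the geometry of the monomial weight system. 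Following the method of Section~2 of \cite{FS}, the condition that a fixed 1-PS $\rho$ fail to make $\Lambda$ stable, namely $\mu(\Lambda,\rho)\le 0$, is equivalent to $\Lambda$ lying in the linear span of the monomial triples of nonnegative $\rho$-weight; this cuts out a linear subvariety $N_\rho$ of the Pl\"ucker space, and the non-stable locus is $\bigcup_\rho N_\rho$.

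The heart of the argument is the contrapositive together with a finiteness reduction. If $\Lambda$ is not stable there is a destabilizing $\rho$, and since $\mu$ is piecewise linear with rational breaks one may take $\rho$ to be rational and \emph{extremal}, meaning that the inequalities forcing $\mu(\Lambda,\rho)\le 0$ hold as tightly as possible; such an extremal direction is pinned down, up to scaling, by its instability pattern (the order ideal of monomials of nonnegative weight, equivalently the flag it induces on $V$). I would enumerate these patterns inside $C$, discard those whose non-stable locus $N_\rho$ is contained in another, and show that the maximal ones are represented exactly by the primitive weight vectors $\rho_1,\dots,\rho_{13}$. Granting this, if $\Lambda$ satisfies $\mu(\Lambda,\rho_i)>0$ for all thirteen types, then $\Lambda$ lies in no $N_{\rho}$ and is therefore stable.

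The step I expect to be the main obstacle is precisely this enumeration and completeness check: verifying that every destabilizing direction in $C$ reduces, without enlarging the associated non-stable locus, to one of the thirteen listed numerical types, with none missing and none redundant. Because $\mu(\Lambda,\cdot)$ is only convex, hence subadditive, it is \emph{not} enough to test the five extremal rays of the Weyl chamber --- this is the source of the extra types and of the subtlety noted in the introduction --- so the analysis must carefully classify the facet normals of the generic state polytope of a net of quadrics, a finite but error-prone convex-geometric computation.
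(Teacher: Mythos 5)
Your setup is correct as far as it goes: reduction to normalized diagonal one-parameter subgroups in the positive Weyl chamber, the combinatorial formula for the Hilbert--Mumford index in terms of Pl\"ucker weights, and the observation that testing only the extremal rays of the chamber is insufficient. But the proposal stops exactly where the theorem begins. The entire content of the statement is the completeness of the list $\rho_1,\dots,\rho_{13}$, and you explicitly defer this (``verifying that every destabilizing direction \dots reduces \dots to one of the thirteen listed numerical types'') without carrying out the enumeration or even describing a finite procedure that provably terminates in this list. As written, the argument is a restatement of the problem, not a proof: nothing in the proposal certifies that no fourteenth type is needed, nor that the thirteen given vectors are the right ones.

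There is also a structural mismatch with how the paper actually closes this gap, which matters because your proposed route is strictly harder than what is needed. You aim to show that every non-stable locus $N_\chi$ is contained in some single $N_{\rho_i}$ (a classification of ``maximal'' destabilizing patterns). The paper instead argues in the contrapositive on the level of a fixed net: assuming $\Lambda$ is $\rho_i$-stable for all $i$, it chooses a normalized basis $(Q_1,Q_2,Q_3)$ adapted to the lexicographic order, translates $\rho_1,\dots,\rho_5$-stability into a list of monomials that must occur in the $Q_j$ (Lemma~\ref{lemma1}), and then runs a case analysis on the initial monomials $M_1,M_2,M_3$ (Cases I--III and their subcases). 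In each case, the assumption that an arbitrary normalized $\chi=(a,\dots,f)$ destabilizes $\Lambda$ yields a system of linear inequalities whose only solutions are $\chi=0$ or $\chi$ proportional to one of the $\rho_j$, contradicting $\rho_j$-stability; several $\rho_i$ are used in concert within a single case (e.g.\ $\rho_7$-stability is invoked to produce an extra monomial before $\rho_8$ or $\rho_{11}$ delivers the contradiction). In particular the paper never needs, and does not prove, that each destabilizing $\chi$ is dominated by one listed type. To salvage your approach you would have to either perform the convex-geometric facet enumeration in full, or switch to the paper's direct case analysis; either way the missing step is the theorem.
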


On the basis of this partial analysis, we may already conclude the important fact that a stable net has a pure two-dimensional intersection, and hence defines a connected surface with local complete intersection singularities.

\begin{cor}
(=Theorem ~\ref{ci})
If a net of quadrics in $\mathbb{P}^5$ is stable, then the corresponding intersection is connected and purely two-dimensional.
\end{cor}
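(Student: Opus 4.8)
The plan is to show that stability forces the base locus $X=\bigcap_i Q_i$ to be a complete intersection, from which both conclusions are immediate. First I would record the elementary reduction: by Krull's Hauptidealsatz every component of $X$ has dimension $\ge 5-3=2$, and if $\dim X=2$ then the three quadrics form a regular sequence (the polynomial ring is Cohen--Macaulay and the ideal has codimension $3$), so $X$ is a complete intersection. A positive-dimensional complete intersection in projective space is automatically equidimensional of the expected dimension and connected (for connectedness one computes $H^0(X,\Oh_X)=k$ from the Koszul resolution). Hence the corollary reduces to the single implication ``$\Lambda$ stable $\Rightarrow \dim X\le 2$'', which I would prove in contrapositive form: if $\dim X\ge 3$ then $\Lambda$ is not stable.

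Next I would analyze how $\dim X\ge 3$ can occur. Assuming $Q_1,Q_2,Q_3$ linearly independent, the key observation is that no pairwise intersection can be an irreducible threefold while $\dim X\ge 3$: if $Q_1\cap Q_2$ were an irreducible threefold contained in $Q_3$, then $Q_3$ would vanish on it, and since such a threefold is a complete intersection with homogeneous ideal $(Q_1,Q_2)$, this forces $Q_3\in\langle Q_1,Q_2\rangle$, contradicting independence. Therefore $\dim X\ge 3$ forces some pairwise intersection, say $Q_1\cap Q_2$, to have dimension $\ge 4$ or to be a reducible threefold. In the first case $Q_1$ and $Q_2$ share a common hyperplane factor; in the second the pencil $\langle Q_1,Q_2\rangle$ contains a quadric of rank $\le 2$, again a product of two hyperplanes. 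After an $SL(6)$ change of coordinates I may thus assume that a common linear form, say $x_5$, divides at least two generators of $\Lambda$; and tracking the reducible splittings of $Q_1\cap Q_2$ (degrees $(1,3)$ or $(2,2)$) shows that in the deepest cases all three generators lie in the ideal of a common linear subspace $\{x_4=x_5=0\}$ or larger.

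With the net in such a normal form, I would exhibit a destabilizing one-parameter subgroup drawn from the list in Theorem~\ref{1-PS}. If two generators lie in $(x_5)$, the subgroup $\rho_5=(5,-1,-1,-1,-1,-1)$ gives these generators strictly positive weight while the third contributes a weight bounded below, so the minimal weighted basis of $\Lambda$ has total weight of the wrong sign and $\Lambda$ fails the Hilbert--Mumford criterion for $\rho_5$. If all three generators lie in $(x_4,x_5)$, so that $X$ contains a $\PP^3$, the subgroup $\rho_2=(1,1,1,1,-2,-2)$ plays the same role, and for containment of a larger linear space $\rho_1$ or $\rho_3$ do. In every case some $\rho_i$ violates stability, which establishes the contrapositive and hence the corollary.

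The step I expect to be the main obstacle is the structural dichotomy of the second paragraph: controlling precisely how a $\ge 3$-dimensional base locus arises, in particular verifying that a reducible $(2,2)$ complete-intersection threefold always forces a rank-$\le 2$ member in the pencil, and cataloguing the resulting splittings together with the linear spaces they push into $X$ (including the non-reduced degenerations, which need separate care). This geometric bookkeeping is exactly what selects which of the thirteen numerical types of Theorem~\ref{1-PS} is the operative destabilizer, and it is where I would concentrate the real work.
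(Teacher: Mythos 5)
Your reduction of the corollary to the single implication ``stable $\Rightarrow\dim X\le 2$'' is sound: once the base locus has the expected dimension, the three quadrics form a regular sequence, and equidimensionality plus connectedness (which the paper instead gets from Fulton--Hansen) follow as you say. The problem lies in your second paragraph, i.e.\ exactly where you predicted the main work would be, and the specific claims you make there are false. First, ``$Q_1\cap Q_2$ an irreducible threefold contained in $Q_3$ forces $Q_3\in\langle Q_1,Q_2\rangle$'' requires the complete intersection scheme to be integral, not merely irreducible as a set (if it is non-reduced, $Q_3\in\sqrt{(Q_1,Q_2)}$ does not put $Q_3$ in the degree-two part of $(Q_1,Q_2)$). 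More seriously, the claim that a reducible quartic threefold $Q_1\cap Q_2$ forces a rank $\le 2$ member of the pencil --- and hence, after a coordinate change, a linear form dividing two generators --- is simply wrong. The net $(x_0x_3-x_1x_2,\ x_0x_5-x_1x_4,\ x_2x_5-x_3x_4)$, which appears in the paper's own proof, is a counterexample: its base locus contains the three-dimensional Segre variety $\mathbb{P}^1\times\mathbb{P}^2$, the intersection of the first two quadrics is the reducible union of the $\mathbb{P}^3=V(x_0,x_1)$ and a cubic scroll, yet every nonzero member of the net has rank $4$ (a rank $\le 2$ quadric containing the nondegenerate irreducible Segre threefold would force it into a hyperplane). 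So none of the normal forms in your third paragraph applies, and your argument produces no destabilizing one-parameter subgroup for a net whose intersection is visibly not purely two-dimensional.

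This missing case --- a degree-$3$ component $Y'\subset Q_1\cap Q_2$ contained in $Q_3$ --- is precisely where the paper concentrates its effort: one classifies $Y'$ as a rational normal scroll $S_{1,1,1}$, $S_{0,1,2}$ or $S_{0,0,3}$, and destabilizes not through divisibility of generators by linear forms but through the singularity conditions of Lemma~\ref{1to5}: $\rho_4$ (the net contains a line $F_1$ along which some member is singular, or a pencil is singular along $F_1$) for the smooth scroll and $S_{0,0,3}$, and $\rho_5$ (the net contains a point $F_0$ at which a pencil is singular) for the cone over $S_{1,2}$. Your treatment of the degree-$1$ and degree-$2$ components via $\rho_2$ and $\rho_1$ is roughly in line with the paper, but without the scroll analysis the proof does not close.
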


And our analysis makes us possible to discuss some connections between a net of quadrics and the associated discriminant sextic curve.

More precisely, we prove the following theorems.

\begin{theorem}
(=Theorem ~\ref{normal})
Suppose a net $\Lambda$ of quadrics in $\mathbb{P}^5$ defines a complete intersection normal surface $S$. If the discriminant $\Delta(S)$ of $S$ is stable then $\Lambda$ is stable.
\end{theorem}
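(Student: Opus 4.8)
The plan is to argue by contraposition through the criterion of Theorem~\ref{1-PS}: it suffices to show that if $\Lambda$ fails to be stable, then either the discriminant $\Delta(S)$ is not stable or $S$ is not normal. By Theorem~\ref{1-PS} the failure of stability is witnessed by one of the thirteen diagonal one-parameter subgroups, so I fix such a $\rho=\rho_k=\mathrm{diag}(t^{a_1},\dots,t^{a_6})$ with $\mu(\Lambda,\rho)\le 0$ and aim to extract from it a destabilization of $\Delta(S)$ or a contradiction with normality.

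First I would set up the dictionary between the two Hilbert--Mumford problems. Write $\Lambda=\langle Q_0,Q_1,Q_2\rangle$ with symmetric matrices $A_0,A_1,A_2$, so that $\Delta(S)$ is the sextic form $D(\lambda)=\det(\lambda_0A_0+\lambda_1A_1+\lambda_2A_2)\in\mathrm{Sym}^6\Lambda^{*}$, well defined up to the $GL(\Lambda)\cong GL(3)$ reparametrization of the net. The subgroup $\rho$ acts on $\Lambda$ and, after subtracting its trace on $\Lambda$, induces a one-parameter subgroup $\bar\rho$ of $SL(3)$ on $\PP(\Lambda)$. The key structural point I would establish is that the degeneration $\Lambda_0=\lim_{t\to 0}\rho(t)\cdot\Lambda$ in $\Gr(3,\mathrm{Sym}^2V^{*})$ carries all of its discriminantal motion through $\bar\rho$: since $\det(\rho(t)A\rho(t))=t^{2\sum a_i}\det A=\det A$ for $\rho\in SL(6)$, the sextic in the original $\lambda$-coordinates is $\rho$-invariant, and the only thing that moves as $t\to 0$ is the compensating basis change $\bar\rho$, whose limit realizes the initial form $\mathrm{in}_{\bar\rho}(D)$.

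Next I would prove the comparison inequality that is the technical heart: for each $\rho_k$ one relates the Pl\"ucker weight $\mu(\Lambda,\rho_k)$ to the extremal $\bar\rho_k$-weight $\mu(D,\bar\rho_k)$ of the sextic, showing that $\mu(\Lambda,\rho_k)\le 0$ forces $\mu(D,\bar\rho_k)\le 0$. This is monomial bookkeeping: an adapted basis making $\Lambda$ $\rho$-unstable concentrates the $A_i$ on low-weight monomials $x_ix_j$, and each term of the Leibniz expansion of $D$ then carries $\bar\rho$-weight controlled by the same quantities. Provided $\bar\rho_k$ is nontrivial, $\mu(D,\bar\rho_k)\le 0$ contradicts the stability of $\Delta(S)$, which demands $\mu(D,\sigma)>0$ for every nontrivial one-parameter subgroup $\sigma$ of $SL(3)$, and a single such $\bar\rho_k$ already suffices.

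It remains to treat the degenerate cases in which this comparison is vacuous, namely when the induced $\bar\rho_k$ is trivial or when $\Lambda_0$ becomes an all-singular net so that the geometric discriminant degenerates and the equality $\mathrm{in}_{\bar\rho_k}(D)=\det|_{\Lambda_0}$ breaks down. This is exactly where normality enters, and I expect it to be the main obstacle. Because $S$ is a complete intersection it is Cohen--Macaulay, hence satisfies $S_2$, so by Serre's criterion $S$ is normal if and only if it is regular in codimension one, i.e. has only finitely many singular points. In the excluded cases the destabilizing subgroup $\rho_k$ forces the base locus either to contain a coordinate linear subspace or to be singular along a curve (the members of $\Lambda$ acquiring a common singular locus of positive dimension), violating $R_1$. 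The delicate part is to carry out this dichotomy uniformly over the thirteen numerical types: for each $\rho_k$ one must show that $\rho_k$-instability either moves $D$ in a destabilizing direction for $SL(3)$ or else produces a positive-dimensional singular locus on $S$, and organizing the case analysis so that no configuration escapes both conclusions is where the real work lies.
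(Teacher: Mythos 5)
Your overall architecture --- contraposition through Theorem~\ref{1-PS}, with a dichotomy ``either the destabilizing $\rho_k$ destabilizes $\Delta(S)$ or it violates normality'' --- is exactly the paper's. Your ``comparison inequality'' is in fact correct and is provable in three lines: if $(Q_1,Q_2,Q_3)$ is the $\rho$-normalized basis with initial weights $\omega_1\ge\omega_2\ge\omega_3$, then every nonzero term of the Leibniz expansion of the coefficient of $\lambda_1^{e_1}\lambda_2^{e_2}\lambda_3^{e_3}$ in $\det(\sum\lambda_mA_m)$ is a product $\prod_{p}(A_{\tau(p)})_{p\sigma(p)}$ whose total $x$-weight is $\sum_p(a_p+a_{\sigma(p)})=2\sum_pa_p=0$ and is bounded above by $\sum_pe_{\tau(p)}\cdot 1=\sum_me_m\omega_m$; hence every monomial of $\Delta$ satisfies $\sum_me_m\omega_m\ge 0$, and since $\sum\omega_m\le 0$ the normalized induced one-parameter subgroup renders $\Delta$ non-stable whenever the $\omega_m$ are not all equal. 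This is genuinely more uniform than what the paper does (the paper instead uses the explicit normal forms of Lemmas~\ref{1to5} and \ref{6to13} to place $\Delta(S)$, case by case, inside an explicitly destabilized span of monomials, or to write it as $-(\det A)^2$ or $-q_1^2q_2$), and it would shorten that half of the argument.

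The genuine gap is that the other half --- the degenerate cases, which are precisely where the normality hypothesis enters --- is only announced, not proved; you yourself flag it as ``where the real work lies.'' These cases cannot be dismissed. When $\omega_1=\omega_2=\omega_3=0$ the induced subgroup of $SL(3)$ is trivial and the inequality says nothing: for $\rho_3$ with weights $(0,0,0)$ one has $\Lambda\subset(x_3,x_4,x_5)$ and $\Delta=-(\det B)^2$ for a $3\times 3$ matrix $B$ of linear forms, and a double smooth cubic is a \emph{stable} sextic (its Newton polytope is twice that of a stable cubic), so one must argue instead that $S$ contains a plane and is therefore reducible, hence not normal; for $\rho_{12}$ with weights $(0,0,0)$ one must actually compute that the Jacobian of $(Q_1,Q_2,Q_3)$ drops rank along the line $x_2=\cdots=x_5=0$, so that $R_1$ fails; and for $\rho_5$ with weights $(-2,-2,-2)$ the surface is a cone over a canonical genus-$5$ curve and can be perfectly normal, so the conclusion there must be that $\Delta\equiv 0$ rather than that $S$ is non-normal. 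Your blanket assertion that in the excluded cases ``the base locus\dots\ is singular along a curve, violating $R_1$'' is thus false as stated in at least one configuration and unverified in the others. Until each all-equal-weight configuration arising from the thirteen types is checked to yield either $\Delta\equiv 0$, a non-stable $\Delta$, or a positive-dimensional singular locus of $S$, the proof is not complete; this verification is the actual content of the paper's proof of Theorem~\ref{normal}.
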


\begin{theorem}
(=Theorem ~\ref{reduce})
Suppose a net $\Lambda$ of quadrics in $\mathbb{P}^5$ defines a complete intersection surface $S$ with the reduced discriminant $\Delta(S)$. If $\Delta(S)$ is stable then $\Lambda$ is stable.
\end{theorem}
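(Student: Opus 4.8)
The plan is to prove the contrapositive through the numerical criterion of Theorem~\ref{1-PS}. Suppose $\Lambda$ is not stable. Then $\Lambda$ is not stable with respect to one of the thirteen subgroups $\rho_1,\dots,\rho_{13}$, and after an $SL(6)$ coordinate change I may take this $\rho_i=\mathrm{diag}(t^{a_0},\dots,t^{a_5})$ in the stated diagonal form. By the Hilbert--Mumford criterion on the Pl\"ucker embedding of $\Gr(3,\mathrm{Sym}^2(\CC^6)^*)$, the instability is witnessed by a $\rho_i$-adapted ordered basis $A_0,A_1,A_2$ of $\Lambda$ whose leading weights $w_0\le w_1\le w_2$ satisfy $w_0+w_1+w_2\ge 0$; concretely each $A_l$ is then supported only on monomials $z_jz_k$ with $a_j+a_k\ge w_l$. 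The key observation is that this same ordered basis supplies the bridge to the sextic: after recentering the $w_l$ to sum to zero it defines a one-parameter subgroup $\sigma_i=\mathrm{diag}(t^{b_0},t^{b_1},t^{b_2})$ of $SL(3)$ acting on the net variables $x_0,x_1,x_2$, and it is $\sigma_i$ --- not $\rho_i$ --- that I will use to destabilize $\Delta(S)=\{\det M(x)=0\}$, where $M(x)=x_0A_0+x_1A_1+x_2A_2$. This detour is forced on us because $\det(\rho^{\mathsf T}M\rho)=\det M$ for $\rho\in SL(6)$, so the discriminant polynomial is $SL(6)$-invariant and $\rho_i$ itself acts trivially on the sextic.

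The heart of the argument is a dichotomy obtained by expanding $\det M(x)$ against the combined bigrading, by $\rho_i$-weight on the $z$-variables and $\sigma_i$-weight on the $x$-variables, that the adapted basis induces. For the most degenerate subgroups the support condition collapses the determinant: when $\rho_i$ forces the lowest-weight block of $M(x)$ to vanish the matrix acquires a zero diagonal block and $\det M(x)$ becomes $\pm\det(C(x))^2$ for the surviving off-diagonal block $C(x)$ --- a perfect square, hence a non-reduced sextic --- while for subgroups such as $\rho_1$ and $\rho_5$ an entire row and column of $M(x)$ are killed and $\det M(x)$ vanishes identically. Every such outcome contradicts the hypothesis that $\Delta(S)$ is reduced (in the vanishing case, that $\Delta(S)$ is even a sextic curve), so these subgroups cannot have destabilized $\Lambda$. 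In the complementary cases the determinant stays reduced, and there I would read off the Newton polygon of $\det M(x)$ in $x_0,x_1,x_2$ from the surviving minors and verify, by a weight count on the determinant expansion, that $\mu(\Delta(S),\sigma_i)\le 0$; since $\sigma_i\in SL(3)$ this contradicts the stability of $\Delta(S)$. As both horns contradict a hypothesis, no $\rho_i$ can destabilize $\Lambda$, and Theorem~\ref{1-PS} then gives stability.

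The step I expect to be the main obstacle is the reduced, non-degenerate case for the balanced subgroups such as $\rho_{10}=(2,1,0,0,-1,-2)$ and $\rho_{13}=(5,3,1,-1,-3,-5)$: here the vanishing pattern on $M(x)$ is mild, the discriminant is genuinely reduced, and the non-reduced escape is unavailable, so I must actually produce the weight inequality for $\sigma_i$. The real work is to identify exactly which minors of $M(x)$ can survive under the support condition, to rule out accidental cancellations in the determinant expansion, and to check that the recentered weights $b_0,b_1,b_2$ both sum to zero and dominate every monomial of the sextic with the correct sign. This is precisely the place where reducedness does the job that normality did in Theorem~\ref{normal}: where the proof of that theorem used isolated singularities of $S$ to exclude the collapsed configurations, here the reducedness of $\Delta(S)$ excludes the perfect-power degenerations directly, and the remaining configurations are pushed onto the stability of the sextic.
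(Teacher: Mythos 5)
Your strategy is essentially the paper's: argue the contrapositive through Theorem~\ref{1-PS}, and for each destabilizing $\rho_i$ show that the structure forced on an adapted basis $A_1,A_2,A_3$ makes $\Delta(S)=\det(xA_1+yA_2+zA_3)$ either non-reduced or $SL(3)$-non-stable. Your idea of extracting a single subgroup $\sigma_i$ from the recentered initial weights is in fact a cleaner, more uniform packaging of what the paper does case by case (listing the monomials that can survive in $\det M$ and citing Mumford 1.9). Moreover, the step you defer as ``the real work'' is a one-line weight count: if $(w_1,w_2,w_3)$ are the initial $\rho_i$-weights with $w_1+w_2+w_3\le 0$ and each $A_l$ is supported on entries $z_jz_k$ with $a_j+a_k\le w_l$, then any monomial $x^{e_1}y^{e_2}z^{e_3}$ of $\det M$ comes from a permutation product whose total $z$-weight is $2\sum_j a_j=0$, whence $e_1w_1+e_2w_2+e_3w_3\ge 0$; recentering gives every monomial non-negative $\sigma_i$-weight, so $\Delta(S)$ is non-stable whenever the $w_l$ are not all equal. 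This also dissolves your worry about cancellations and surviving minors: for non-stability you only need an upper bound on the support of $\det M$, and cancellation can only shrink the support.

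Two points do need repair. First, your sign conventions are inverted: $\rho_i$-non-stability is witnessed by an adapted basis whose \emph{maximal} weights satisfy $w_1+w_2+w_3\le 0$, with each $Q_l$ supported on monomials of weight $\le w_l$; as written you have set up non-stability with respect to $\rho_i^{-1}$, which is not what Theorem~\ref{1-PS} delivers. Second, your sorting of cases into the two horns is wrong where it is most specific: for $\rho_1$ and $\rho_5$ the determinant does \emph{not} vanish identically (for $\rho_5$, say, the first row of $M$ becomes $x\cdot(0,\ast,\dots,\ast)$, so $\det M$ is merely divisible by $x^2$), and these cases must go through the $\sigma_i$ horn --- which they do, exactly as in the paper's monomial-support argument. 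The genuinely degenerate cases, where all recentered weights vanish and $\sigma_i$ is trivial, are precisely $\rho_{12}$ (initial weights $(0,0,0)$ by Lemma~\ref{6to13}) and the first alternative of $\rho_3$ ($\Lambda$ contains $F_2$); there the paper computes $\det M=-q_1^2q_2$ and $\det M=-(\det A)^2$ respectively, and only there is the reducedness hypothesis actually consumed. With the signs fixed, the $\rho_1/\rho_5$ claim withdrawn, and the weight count written out, your argument closes and coincides in substance with the paper's proof.
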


Let $S$ be a K3 surface of degree $8$ in $\mathbb{P}^5$, given by the complete intersection of three quadrics. Associated to $S$ is a K3 surface $S'$ which is a double cover of $\mathbb{P}^2$ ramified over a sextic. And there is a dominant rational map $\phi: \mathcal{N}^{ss}//SL(6)\dashrightarrow \mathcal{C}_6^{ss}//SL(3)$ where $\mathcal{N}$ the space of nets of quadrics in $\mathbb{P}^5$ and $\mathcal{C}_6$ the space of plane sextic curves. The degree of this map is equal to the number of non-effective theta characteristics on a general sextic curve which is $2^9(2^{10}+1)$ (cf. \cite{Do} and \cite{Sh}). Our original motivation in this paper is to compare the moduli space of algebraic K3 surfaces with degree $8$ with the moduli space of K3 surfaces as a double cover of $\mathbb{P}^2$, ramified over a sextic curve. It is well known that if $S$ is nonsingular then $\Delta(S)$ is stable \cite{Be2}.

\begin{theorem}\label{int-simple}
(=Theorem ~\ref{simple})
Suppose a net $\Lambda$ of quadrics in $\mathbb{P}^5$ defines a complete intersection surface $S$. If $S$ has simple singularities, then $\Delta(S)$ has simple singularities.
\end{theorem}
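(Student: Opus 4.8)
The plan is to argue locally. Since the simple singularities of $S$ are isolated, $\mathrm{Sing}(S)$ is finite, and it suffices to show $\Delta(S)$ has a simple singularity at each of its singular points. Writing $M_\lambda=\lambda_1M_1+\lambda_2M_2+\lambda_3M_3$ for the symmetric matrix of $Q_\lambda$, we have $\Delta=\{\det M_\lambda=0\}$ and $\partial_{\lambda_i}\det M_\lambda=\mathrm{tr}\bigl(\mathrm{adj}(M_\lambda)M_i\bigr)$. At a corank-one point with vertex $p$ (so $\ker M_\lambda=\langle p\rangle$) one has $\mathrm{adj}(M_\lambda)=c\,pp^{T}$ with $c\neq0$, whence the gradient is $c\,(Q_1(p),Q_2(p),Q_3(p))$, which vanishes exactly when $p\in S$; at corank $\ge 2$ the adjugate vanishes identically. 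Thus $\mathrm{Sing}(\Delta)$ consists of the corank-one quadrics whose vertex lies on $S$ — precisely the data recording $\mathrm{Sing}(S)$ — together with the corank $\ge 2$ quadrics of the net. I would first dispose of a corank-two point whose kernel line $\ell=\mathbb{P}\langle u_1,u_2\rangle$ is disjoint from $S$: restricting $M_1,M_2$ to $\langle u_1,u_2\rangle$ gives symmetric $2\times2$ matrices $P_1,P_2$, the tangent cone of $\Delta$ is $\det(sP_1+tP_2)$, and $\ell\cap S=\varnothing$ forces this binary quadratic to be nondegenerate (degeneracy would produce a vector isotropic for the whole pencil, i.e. a point of $\ell\cap S$), so $\Delta$ has a node there.

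The heart of the matter is the points where $\ker M_\lambda$ meets $S$ in a singular point $p$. The organizing principle I would use is that a simple surface singularity is the suspension of the plane-curve simple singularity of the same name, so I expect the germ of $\Delta$ at $[Q_\lambda]$ to have the same ADE type as that of $S$ at $p$. To compute, place $p=e_6$ and the singular quadric as $Q_3$ with $M_3=\mathrm{diag}(A,0)$; since $p\in S$, the forms $Q_1,Q_2$ have affine shape $\ell_i+q_i$ with $(M_i)_{66}=Q_i(p)=0$, so $M_i=\left(\begin{smallmatrix}B_i&b_i\\ b_i^{T}&0\end{smallmatrix}\right)$. The local equation of $\Delta$ is read off from the bordered determinant
\[
g(s,t)=\det\begin{pmatrix}A+sB_1+tB_2 & sb_1+tb_2\\ (sb_1+tb_2)^{T}&0\end{pmatrix}=-\,w^{T}\,\mathrm{adj}(A+sB_1+tB_2)\,w,\qquad w=sb_1+tb_2,
\]
while the equation of $S$ comes from eliminating the two variables dual to $\ell_1,\ell_2$, giving $f(x_1,x_2,x_3)=q_3|_{\ell_1=\ell_2=0}+(\text{higher order})$. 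Comparing lowest-order terms already matches the two pictures in the node case: $S$ has an $A_1$ iff $q_3|_{\ell_1=\ell_2=0}$ has rank $3$, which one checks is equivalent to $\mathrm{adj}(A)$ remaining nondegenerate on $\langle b_1,b_2\rangle$, i.e. to the binary quadratic tangent cone of $\Delta$ being nondegenerate.

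For the remaining types I would run a case analysis through normal forms. For $A_n$ ($n\ge2$), $D_n$, $E_6$, $E_7$, $E_8$ the quadratic part of $f$ degenerates (rank $2$, respectively rank $1$), which both bounds the corank of $Q_3$ and pins down the leading behaviour of $A,b_1,b_2$; I would then expand $g(s,t)$ to the relevant order, applying the splitting lemma and finite determinacy to both $f$ and $g$ so that only finitely many jets matter, and check that the resulting plane-curve germ is the de-suspension of the surface germ, hence the same ADE type. The main obstacle I anticipate is exactly this degenerate-tangent-cone bookkeeping: when the tangent cone of $\Delta$ is a double line one must extract the subleading terms of $-\,w^{T}\mathrm{adj}(A+sB_1+tB_2)w$, which couple the $(s,t)$-dependence of $w$ with that of the $4\times4$ minors in $\mathrm{adj}$, and show they reproduce the $y^{\,n+1}$, $D_n$, or $E_n$ tail. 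A secondary obstacle is the corank-two points whose kernel line meets $S$, where the two sources of singularity coincide and one must verify that the degeneration of $\det(sP_1+tP_2)$ is governed by the simple singularity of $S$ on $\ell$ and so stays simple. Controlling these finitely many degenerate configurations uniformly across the ADE list is where the real work lies.
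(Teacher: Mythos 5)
Your setup is sound and runs parallel to the paper's: both identify $\mathrm{Sing}(\Delta)$ via the adjugate, both compute the local equation of $\Delta$ by a Schur-complement/bordered-determinant expansion (the paper's $\det M=\det C\cdot\det(A-B^{T}C^{-1}B)$), and both proceed by a case analysis over the corank of the distinguished quadric and the normal form of the net restricted to its vertex. Your treatment of a corank-two point whose kernel line misses $S$ (a node of $\Delta$) is correct and matches the paper's first subcase.

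The gap is your organizing principle. The expectation that the germ of $\Delta$ at $[Q_\lambda]$ has the same ADE type as that of $S$ at the corresponding point is only valid when $Q_\lambda$ has corank one (this is Theorem 1.4 of Wall, which the paper simply cites for the case $k=0$); for corank $\ge 2$ it is false, and those are exactly the cases you defer as ``secondary.'' For instance, a corank-three member whose pencil of conics on the vertex plane has discriminant with three distinct roots gives a single $D_4$ point of $\Delta$ while $S$ has up to four separate nodes; and in the corank-two analysis the paper exhibits a configuration where $S$ has a $D_m$ singularity but $\Delta$ acquires an $E_6$. Consequently ``check that the plane-curve germ is the de-suspension of the surface germ'' cannot be the verification step. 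What is actually needed, and is absent from your plan, is a dichotomy: for each degenerate jet of $\det(A-U)$ one must either identify a simple type for $\Delta$ or show that the corresponding singularity of the total space $V$ (hence of $S$, via Wall's Proposition 1.3 together with tameness of the vertex, which in turn uses the stability of $\Lambda$ guaranteed by the simple-singularity hypothesis --- another ingredient you never invoke) is itself non-simple, contradicting the assumption. That last step is where the paper spends most of its effort: explicit formal coordinate changes in $\mathbb{C}[[x_0,x_1,\mu,\nu]]$ reducing $F$ to $x_0^2+x_1^2+G(\mu,\nu)$ and computing the $3$-jet and the $\nu^4$, $\mu\nu^3$, $\nu^5$ coefficients to show $G\in\langle\mu,\nu^2\rangle^3$. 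Your proposal has no mechanism for producing a non-simple singularity on $S$ when the expansion of $\Delta$ degenerates past $E_8$, so as it stands it cannot close the boundary cases.
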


The morphism from the moduli space of degree $8$ K3 surfaces to degree $2$ K3 surfaces has been studied from many points of view, starting from Mukai's paper \cite{Mukai}. His result implies directly that if $S$ is a smooth K3 surface which is a complete intersection of three quadrics in $\mathbb{P}^5$ then $\Delta(S)$ has simple singularities.

It has an interpretation in Hodge-theoretic terms that should yield Theorem~\ref{int-simple} fairly quickly. One can compare the Hodge structures on the two K3 surfaces and interprets what it means for their images to have simple singularities in terms of (-2)-class in the Picard group \cite{Hassett}. Our approach is rather direct via GIT analysis even though it involves complicated computations.

In Section 2, we describe a finite set of numerical types of one parameter subgroups $\{\rho_i\}_{i=1}^N$ such that the union of the $\rho_i$-nonstable points is $Gr(3,W)\backslash Gr(3,W)^{s}$. In Section 3, we use this result to prove our main theorems.
In this paper, we work on the field of complex numbers.

\section{Stability of nets of quadrics}

Our notations and GIT analysis follow Section 2 of \cite{FS}.

Let $V=H^0(\mathbb{P}^5,\mathcal{O}(1))$ and let $W=H^0(\mathbb{P}^5,\mathcal{O}(2))$ be the space of quadratic forms. A net of quadrics in $\mathbb{P}^5$ is by definition a plane in $\mathbb{P}(\bigwedge ^3W)$. So the space of nets of quadrics is by definition the Grassmannian $Gr(3,W)\subset \mathbb{P}(\bigwedge ^3W)$.
%
%

Let $\rho=(a_0,a_1,a_2,a_3,a_4,a_5):\mathbb{C}^*\rightarrow SL(6)$ be a normalized one-parameter subgroup(1-PS), i.e. $\rho$ is a one-parameter subgroup, acting diagonally on a basis $\{x_0,x_1,\dots,x_5\}$ of $V$ with weights $\{a_0,a_1,a_2,a_3,a_4,a_5\}$ satisfying $a_0\ge a_1\ge a_2\ge a_3\ge a_4\ge a_5$ and $\sum_{k=0}^{5} a_k=0$.

Then the $\rho$-weight of a quadratic monomial $x_ix_j$ is
$$
\omega_{\rho}(x_ix_j)=a_i+a_j
$$
and the $\rho$-weight of the Pl\"{u}cker coordinate $x_{i_1}x_{j_1}\wedge x_{i_2}x_{j_2}\wedge x_{i_3}x_{j_3}$ is simply $\sum_{k=1}^3 \omega_{\rho}(x_{i_k}x_{j_k})$.

By the Hilbert-Mumford numerical criterion (Theorem 2.1 in \cite{GIT}), a net $\Lambda$ is stable (resp., semistable) with respect to $\rho$ if there exists a Pl\"{u}cker coordinate that does not vanish on $\Lambda$ with positive (resp., non-negative) $\rho$-weight. And $\Lambda$ is stable (resp., semistable) if and only if $\Lambda$ is stable (resp., semistable) with respect to all one-parameter subgroups.

A priori, the numerical criterion requires one to check $\rho$-stability for all one-parameter subgroups. However, there necessarily exists a finite set of numerical types of one-parameter subgroups $\{\rho_i\}_{i=1}^N$ such that the union of the $\rho_i$-nonstable points is $Gr(3,W)\backslash Gr(3,W)^{s}$. The first main result of this section, Theorem~\ref{1-PS}, describes such a set of one-parameter subgroups explicitly.

Throughout this section, we use the following notations. Given a basis $\{x_0,x_1,\dots,x_5\}$ of $V$ and a normalized 1-PS $\rho$ acting on $\{x_0,x_1,\dots,x_5\}$, we can define two complete orderings on the set of quadratic monomials:

(1) The lexicographic ordering ``$\succ_{lex}$",

(2) ``$\succ_{\rho}$": $m_1\succ_{\rho} m_2$ if either $\omega_{\rho}(m_1)> \omega_{\rho}(m_2)$ or $\omega_{\rho}(m_1)= \omega_{\rho}(m_2)$ and $m_1\succ_{lex} m_2$.

And there is another ordering ``$\geqslant$", according to which $m_1\geqslant m_2$ if and only if $\omega_{\rho}(m_1)\ge \omega_{\rho}(m_2)$ for any normalized 1-PS acting diagonally on $\{x_0,x_1,\dots,x_5\}$. Note that $m_1\geqslant m_2$ implies $m_1\succeq_{lex} m_2$.

For any quadric $Q\in W$, we denote by $in_{lex}(Q)$ the initial monomial of $Q$ with respect to $\succ_{lex}$ and if $\rho$ is normalized 1-PS acting on $\{x_0,x_1,\dots,x_5\}$, we denote $in_{\rho}(Q)$ by the initial monomial of $Q$ with respect to $\succ_{\rho}$.

For any net $\Lambda=(Q_1,Q_2,Q_3)$, we can always choose a basis $(Q'_1,Q'_2,Q'_3)$ with $in_{lex}(Q'_1)\succ_{lex} in_{lex}(Q'_2)\succ_{lex} in_{lex}(Q'_3)$ by replacing $Q_1,Q_2,Q_3$ with a linear combination of the three polynomials. We call such a basis $(Q'_1,Q'_2,Q'_3)$ normalized basis of $\Lambda$.

Finally, given a basis $\{x_0,x_1,\dots,x_5\}$ of $V$, we define the distinguished flag $F_0\subset F_1\subset F_2\subset F_3\subset F_4\subset \mathbb{P}(V)$ as follows:
\begin{equation*}
\begin{aligned}
&F_0: x_1=x_2=x_3=x_4=x_5=0 , \\
&F_1: x_2=x_3=x_4=x_5=0 , \\
&F_2: x_3=x_4=x_5=0 , \\
&F_3: x_4=x_5=0 , \\
&F_4: x_5=0 . \\
\end{aligned}
\end{equation*}

\begin{theorem}\label{1-PS}
Suppose that $\Lambda$ is stable with respect to every one-parameter subgroup of the following numerical types:
\begin{align*}
&{\rm(1)} \ \rho_1=(1,1,1,1,1,-5). && {\rm(8)} \ \rho_8=(4,1,1,-2,-2,-2). \\
&{\rm(2)} \ \rho_2=(1,1,1,1,-2,-2). && {\rm(9)} \ \rho_9=(3,1,1,-1,-1,-3). \\
&{\rm(3)} \ \rho_3=(1,1,1,-1,-1,-1). && {\rm(10)} \ \rho_{10}=(2,1,0,0,-1,-2). \\
&{\rm(4)} \ \rho_4=(2,2,-1,-1,-1,-1). && {\rm(11)} \ \rho_{11}=(5,5,-1,-1,-1,-7). \\
&{\rm(5)} \ \rho_5=(5,-1,-1,-1,-1,-1). && {\rm(12)} \ \rho_{12}=(1,1,0,0,-1,-1). \\
&{\rm(6)} \ \rho_6=(2,2,2,-1,-1,-4). && {\rm(13)} \ \rho_{13}=(5,3,1,-1,-3,-5). \\
&{\rm(7)} \ \rho_7=(7,1,1,1,-5,-5).
\end{align*}
Then $\Lambda$ is stable.
\end{theorem}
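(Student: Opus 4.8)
The plan is to reduce the infinite family of stability conditions required by the Hilbert--Mumford criterion to the finite list of thirteen numerical types displayed in the theorem. The key observation is that stability with respect to a 1-PS $\rho$ depends only on the weight function $\omega_\rho$ restricted to the quadratic monomials, and more precisely only on the partial order ``$\geqslant$'' that the 1-PS induces. Two distinct normalized 1-PS's can induce the same linear order on monomials, and in that case they impose the same stability condition on any net. So the first step is to parametrize the space of normalized weight vectors $(a_0,\dots,a_5)$ with $a_0 \ge \cdots \ge a_5$ and $\sum a_k = 0$ as a rational polyhedral cone, and to subdivide it into the finitely many open chambers on which the induced order ``$\succ_\rho$'' on quadratic monomials is constant. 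Within a single chamber the combinatorics of ``which Pl\"ucker coordinate has maximal weight'' is fixed, so one only needs a single representative 1-PS per chamber.

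First I would set up the combinatorial dictionary: for a normalized basis $(Q_1', Q_2', Q_3')$ of $\Lambda$, stability with respect to $\rho$ is controlled by the initial monomials $in_\rho(Q_i')$, and a net fails to be stable with respect to $\rho$ precisely when the maximal-weight non-vanishing Pl\"ucker coordinate has non-positive $\rho$-weight. I would translate this into a condition that the three leading monomials $in_\rho(Q_1'), in_\rho(Q_2'), in_\rho(Q_3')$ all lie in a prescribed ``destabilizing region'' determined by the sign of $\omega_\rho$. The heart of the argument is then the reduction: if $\Lambda$ is non-stable, there exists some destabilizing 1-PS $\rho$; I want to show that one can always replace $\rho$ by one of the thirteen listed types $\rho_i$ while preserving the property that $\Lambda$ is $\rho_i$-nonstable. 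Following the method in Section 2 of \cite{FS}, this is done by arguing that the closure of the union of the destabilizing loci (one locus per chamber) is covered by the loci of these thirteen distinguished 1-PS's, because each of the finitely many chambers either already appears on the list or has a destabilizing locus contained in the destabilizing locus of a listed type.

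The concrete mechanism for this containment is a maximality/dominance argument on weight vectors: for a fixed assignment of leading monomials to $Q_1', Q_2', Q_3'$, the set of normalized weight vectors for which that assignment is destabilizing is itself a polyhedral cone, and the extreme rays of that cone are exactly the 1-PS's one must test. I would enumerate the possible triples of leading monomials, compute for each the associated cone of destabilizing weights, and identify its extreme rays; the claim is that after clearing denominators and normalizing, every extreme ray that arises is (up to the ordering convention $a_0 \ge \cdots \ge a_5$ and the use of the distinguished flag) one of $\rho_1,\dots,\rho_{13}$. Any chamber whose destabilizing cone has all its extreme rays among the thirteen is then automatically covered, since destabilization is governed by the vanishing of a finite set of Pl\"ucker coordinates whose weights are linear in the $a_k$, and a linear function non-positive at every vertex of a cone is non-positive throughout.

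The main obstacle I anticipate is the sheer combinatorial bookkeeping in the enumeration of destabilizing triples and the verification that no extreme ray falls outside the list of thirteen. There are $\binom{6+1}{2} = 21$ quadratic monomials, hence a large number of candidate leading-monomial triples, and for each one must check both that the corresponding flag-genericity conditions are consistent with a normalized basis and that the resulting cone's extreme rays reduce to a listed type. The subtlety is that some candidate 1-PS's are redundant (their destabilizing locus is contained in that of another) and must be pruned, while one must be certain no genuinely new destabilizing direction has been missed; this is exactly the step where an error would produce either a spurious extra type or, worse, an omitted one that leaves a gap in the covering. I expect the proof to proceed by carefully organizing the monomials via the weight inequalities $a_i + a_j$ and exploiting the normalization $a_0 \ge \cdots \ge a_5$, $\sum a_k = 0$ to cut the enumeration down to a manageable case analysis, with the thirteen types emerging as the vertices of the relevant secondary fan.
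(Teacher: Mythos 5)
Your high-level strategy --- reduce the Hilbert--Mumford criterion to a finite list by polyhedral considerations on the cone of normalized weight vectors, following Section 2 of \cite{FS} --- is indeed the strategy of the paper. But the proposal stops at the strategy and never performs the verification that \emph{is} the proof. The content of the theorem is precisely the claim that these thirteen types suffice; your outline would read identically if the list had twelve entries or twenty, so it cannot certify the statement. Concretely, what is missing is: (i) the structural lemma extracted from $\rho_1,\dots,\rho_5$-stability (the paper's Lemma~\ref{lemma1}), which guarantees that a normalized basis $(Q_1,Q_2,Q_3)$ contains specific monomials bounded below in the order ``$\geqslant$'' (e.g.\ $Q_3\notin(x_4,x_5)^2$ forces a term $\geqslant x_4^2$ in $Q_3$); (ii) the case division by the position of $F_0$ and $F_1$ relative to the base locus and by the lex-leading monomials $M_1,M_2$; and (iii) in each case, the check that the linear inequalities forced by non-$\chi$-stability, together with $a\ge b\ge\cdots\ge f$ and $\sum a_k=0$, have as solution set either $\{0\}$ or a single ray proportional to one of the listed $\rho_i$ (whence a contradiction with the assumed $\rho_i$-stability). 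The paper's proof is this bootstrap: stability for the first few types constrains the net enough that an arbitrary destabilizing $\chi$ is forced onto the list; later types ($\rho_7$--$\rho_{13}$) are invoked mid-case to guarantee further monomials. None of this appears in the proposal.

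Two of your justifying assertions are also wrong as stated. First, ``within a single chamber the ordering is fixed, so one only needs a single representative 1-PS per chamber'' is false: fixing the ordering $\succ_\rho$ on quadratic monomials fixes \emph{which} Pl\"ucker coordinate is maximal, but not the \emph{sign} of its weight, which varies across the chamber; so a single representative does not detect all destabilizing directions in its chamber. Second, ``a linear function non-positive at every vertex of a cone is non-positive throughout'' argues the wrong implication. The destabilizing locus $\{\chi:\omega_\chi(P)\le 0\ \text{for all Pl\"ucker coordinates } P \text{ not vanishing on }\Lambda\}$ is an intersection of half-spaces, hence its own polyhedral cone $D_\Lambda$; what one needs is that if $D_\Lambda\ne\{0\}$ then $D_\Lambda$ contains an extreme ray lying on the list, and the extreme rays of $D_\Lambda$ are in general \emph{not} the extreme rays of the ambient chamber (a net can be destabilized by an interior $\chi$ of a chamber while being stable for both of its edges). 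Identifying which extreme rays can actually occur is exactly the computation the paper carries out case by case, and it is the step your proposal defers.
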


\begin{rmk}
By the above Theorem, $\Lambda$ is stable with respect to a fixed torus $T$ if and only if it is stable with respect to all one-parameter subgroups in $T$ of the numerical types $\{\rho_i\}_{i=1}^{13}$.
\end{rmk}

Fix a net $\Lambda$ which is $\rho_i$-stable for each $1\le i\le 13$. By the Hilbert-Mumford numerical criterion, to prove that $\Lambda$ is stable, we must show that $\Lambda$ is stable with respect to an arbitrary 1-PS $\chi:\mathbb{C}^*\to SL(6)$. Without loss of generality, we may assume that $\chi$ is normalized, acting diagonally on the basis $\{x_0,\cdots , x_5\}$ with weights $(a,b,c,d,e,f)$. To prove the theorem, we must exhibit a Pl\"{u}cker coordinates that does not vanish on $\Lambda$ with positive $\chi$-weight. More explicitly, if $(Q_1,Q_2,Q_3)$ is a normalized basis of $\Lambda$, we must exhibit non-zero quadratic monomials $m_1,m_2,m_3$ in the variables $\{x_0,\cdots, x_5\}$ which appear with non-zero coefficient in $Q_1\wedge Q_2\wedge Q_3$ with $\sum_{k=1}^3 \omega_{\chi}(m_k)>0$. We begin with a preparatory lemma.

\begin{lemma}\label{lemma1}
If a net $\Lambda$ is $\rho_i$-stable for each $1\le i\le 5$,
then for a basis $\{x_0,\cdots , x_5\}$ of $V$, the normalized basis $(Q_1,Q_2,Q_3)$ of $\Lambda$ satisfies the following:

$(1)$ $Q_1,Q_2,Q_3\notin (x_5)$.

$(2)$ $(Q_2,Q_3)\nsubseteq (x_4,x_5)$ and $Q_3\notin (x_4,x_5)^2$.

$(3)$ $(Q_1,Q_2,Q_3)\nsubseteq (x_3,x_4,x_5)$ and either $(Q_2,Q_3)\nsubseteq (x_3,x_4,x_5)$ or $Q_3\notin (x_3,x_4,x_5)^2$.

$(4)$ $(Q_2,Q_3)\nsubseteq (x_2,x_3,x_4,x_5)^2$ and either $(Q_1,Q_2,Q_3)\nsubseteq (x_2,x_3,x_4,x_5)$ or $Q_3\notin (x_2,x_3,x_4,x_5)^2$.

$(5)$ $(Q_1,Q_2,Q_3)\nsubseteq (x_1,x_2,x_3,x_4,x_5)$ or $(Q_2,Q_3)\nsubseteq (x_1,x_2,x_3,x_4,x_5)^2$.

(i.e. $in_{lex}(Q_1)=x_0^2$ or $in_{lex}(Q_1),in_{lex}(Q_2)\in (x_0)$.)
\end{lemma}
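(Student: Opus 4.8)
The plan is to prove the five assertions one at a time, pairing condition $(i)$ with the subgroup $\rho_i$, and in each case to argue the contrapositive via the Hilbert--Mumford criterion: if the stated membership condition fails, then $\Lambda$ is $\rho_i$-nonstable. The organizing observation is that $\rho_i$ is adapted to the flag member $F_{6-i}$, so its weight on a quadratic monomial takes only two or three values and the resulting weight filtration of $W$ is cut out precisely by the ideals $I_k=(x_k,\dots,x_5)$ and their squares. For example $\rho_2=(1,1,1,1,-2,-2)$ gives weight $2$ to the monomials in $x_0,\dots,x_3$, weight $-1$ to $x_ix_4,x_ix_5$ with $i\le 3$, and weight $-4$ to $(x_4,x_5)^2$, and the union of the two lower weight classes spans exactly $(x_4,x_5)\cap W$.

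The first thing I would set up is a formula for the maximal $\rho_i$-weight of a nonvanishing Pl\"ucker coordinate. Since Pl\"ucker coordinates are the $3\times 3$ minors of the coefficient matrix of $(Q_1,Q_2,Q_3)$ in the monomial basis, the maximal weight is computed by a greedy (matroid) selection of monomials in order of decreasing weight: at each threshold $w$ the number of chosen monomials of weight $\ge w$ equals the rank of the weight-$\ge w$ columns, namely $\dim P_{\ge w}(\Lambda)=3-\dim(\Lambda\cap W_{<w})$, where $P_{\ge w}$ is the coordinate projection onto the weight-$\ge w$ monomials and $W_{<w}$ is the span of the monomials of weight $<w$. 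Writing $c=\dim\bigl(\Lambda\cap (I_{6-i}\cap W)\bigr)$ and $e=\dim\bigl(\Lambda\cap (I_{6-i}^2\cap W)\bigr)$, the maximal $\rho_i$-weight is an explicit affine function of $c$ and $e$; for $\rho_2$ one gets $2(3-c)-(c-e)-4e=6-3c-3e$, so $\Lambda$ is $\rho_2$-stable iff $c+e\le 1$. The analogous arithmetic yields $c=e=0$ for $\rho_1$, $c+e\le 2$ for $\rho_3$, $c+e\le 3$ for $\rho_4$, and for $\rho_5$ the split condition $\Lambda\nsubseteq I_1$ or $e\le 1$.

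The last step is to convert these dimension bounds into the stated membership assertions, and this direction is elementary and basis-free. Since $Q_2,Q_3$ (resp.\ $Q_1,Q_2,Q_3$) are linearly independent, $\dim(\Lambda\cap I)\le 1$ forces $\langle Q_2,Q_3\rangle\nsubseteq I$, $\dim(\Lambda\cap I)\le 2$ forces $\langle Q_1,Q_2,Q_3\rangle\nsubseteq I$, and $\dim(\Lambda\cap I^2)=0$ forces $Q_3\notin I^2$; in the two places where $Q_3\notin I^2$ is asserted, the stability inequality indeed forces $e=0$. Matching each stability inequality against the admissible region of pairs $(c,e)$ with $e\le c$ then reproduces the disjunctions verbatim: for instance $\rho_4$-stability ($c+e\le 3$) excludes exactly $(c,e)\in\{(2,2),(3,1),(3,2),(3,3)\}$, which is precisely what condition $(4)$ forbids.

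I expect the main difficulty to be bookkeeping rather than conceptual, in two respects. First, the whole argument rests on the greedy weight formula, so I would take care to justify that ties among equal-weight monomials are irrelevant and that $\dim P_{\ge w}(\Lambda)=3-\dim(\Lambda\cap W_{<w})$. Second, I would have to run the weight arithmetic for all five subgroups and check, term by term, that each inequality on $(c,e)$ matches the intricate disjunctive phrasing of $(3)$, $(4)$ and $(5)$; the parenthetical reformulation of $(5)$ is a useful consistency check, since $\Lambda\nsubseteq I_1$ is the same as $in_{lex}(Q_1)=x_0^2$, while $\dim(\Lambda\cap I_1^2)\le 1$ is the same as $in_{lex}(Q_1),in_{lex}(Q_2)\in(x_0)$.
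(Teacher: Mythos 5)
Your proposal is correct and follows the same route the paper takes: the paper's proof of this lemma is the one-line assertion that each condition $(i)$ ``follows immediately'' from $\rho_i$-stability, and your greedy computation of the maximal Pl\"ucker weight in terms of $c=\dim(\Lambda\cap I_{6-i})$ and $e=\dim(\Lambda\cap I_{6-i}^2)$ is exactly the Hilbert--Mumford bookkeeping that assertion leaves implicit (compare the enumeration of initial-weight triples in the proofs of Lemmas~\ref{1to5} and~\ref{6to13}). Your weight formulas and the matching of the excluded $(c,e)$ pairs against the disjunctions in $(1)$--$(5)$ all check out, so this is a valid and more explicit write-up of the paper's argument rather than a different one.
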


\begin{proof}
$(1),(2),(3),(4),(5)$ follows immediately from $\rho_1, \rho_2, \rho_3, \rho_4, \rho_5$-stability of $\Lambda$, respectively.
\end{proof}

Let $M_i=in_{lex}(Q_i)$ for $i=1,2,3$. We can now begin the proof of Theorem~\ref{1-PS}.

\begin{proof}[Proof of Theorem~\ref{1-PS}]
We consider separately the following three cases:
\begin{equation*}
\begin{aligned}
&\rm{I}. \ && F_0 \ \text{is not in the base locus of} \ \Lambda ; \\
&\rm{II}. \ && F_0 \ \text{is in the base locus of} \ \Lambda \ \text{but} \ F_1 \ \text{is not}; \\
&\rm{III}. \ && F_1 \ \text{is in the base locus of} \ \Lambda .
\end{aligned}
\end{equation*}

$\odot$\textbf{Case I: $F_0$ is not a base point.} We have $M_1=x_0^2$.

$\bullet$\textbf{Case I.1:} $Q_2$ has a term $m_2\geqslant x_2^2$.
Then $Q_3$ has a term $m_3\geqslant x_4^2$, $M_3\geqslant x_3x_5$ and $M_2\geqslant x_1x_5$ by Lemma 0.1(1),(2) and (4), respectively. Suppose $\Lambda$ is not $\chi$-stable, then
\begin{small}\begin{equation*}
\omega_{\chi}(M_1)+ \left(\begin{array}{rl}
\omega_{\chi}(M_2) \\
\omega_{\chi}(m_2)
\end{array}\right)
+ \left(\begin{array}{rl}
\omega_{\chi}(M_3) \\
\omega_{\chi}(m_3)
\end{array}\right)
\le 0
\Longleftrightarrow 2a + \left(\begin{array}{rl}
b+f \\
2c
\end{array}\right)
+ \left(\begin{array}{rl}
d+f \\
2e
\end{array}\right)
\le 0.
\end{equation*}\end{small}

Then $a=b=c=d=e=f=0$. So we get a contradiction.
\vskip 2mm

$\bullet$\textbf{Case I.2:} $Q_2$ has no term $\geqslant x_2^2$(i.e. $M_2<x_2^2$).
Then $Q_3$ has a term $m_3\geqslant x_4^2$, $Q_2$ has a term $m_2\geqslant x_3^2$, $M_3\geqslant x_2x_5$ and $M_2\geqslant x_1x_5$ by Lemma 0.1(1),(2),(3) and (4), respectively. Suppose $\Lambda$ is not $\chi$-stable, then
\begin{small}\begin{equation*}
\omega_{\chi}(M_1)+ \left(\begin{array}{rl}
\omega_{\chi}(M_2) \\
\omega_{\chi}(m_2)
\end{array}\right)
+ \left(\begin{array}{rl}
\omega_{\chi}(M_3) \\
\omega_{\chi}(m_3)
\end{array}\right)
\le 0
\Longleftrightarrow 2a + \left(\begin{array}{rl}
b+f \\
2d
\end{array}\right)
+ \left(\begin{array}{rl}
c+f \\
2e
\end{array}\right)
\le 0.
\end{equation*}\end{small}

Then $\chi=(2k,2k,2k,-k,-k,-4k)$. This contradicts to $\rho_6$-stability.
\vskip 3mm

$\odot$\textbf{Case II: $F_0$ is a base point but $F_1$ is not in the base locus.}
By Lemma 0.1, we have the following conditions:

$(1)$ $Q_3$ has a term $\geqslant x_4^2$.

$(2)$ $Q_2$ has a term $\geqslant x_3^2$ and $M_3\geqslant x_3x_5$.

$(3)$ Either $Q_2$ has a term $\geqslant x_2^2$ or $M_3\geqslant x_2x_5$.

$(4)$ $Q_1$ has a term $\geqslant x_1^2$.

$(5)$ $M_1, M_2\in \{x_0x_1,x_0x_2,x_0x_3,x_0x_4,x_0x_5 \}$.
\vskip 1mm

$\bullet$\textbf{Case II.1:} $Q_2$ contains a term $m_2\geqslant x_2^2$. Then

$(1)$ $Q_3$ has a term $\geqslant x_4^2$.

$(2)$ $M_3\geqslant x_3x_5$.

$(3)$ $Q_2$ has a term $\geqslant x_2^2$.

$(4)$ $Q_1$ has a term $\geqslant x_1^2$.

$(5)$ $M_1, M_2\in \{x_0x_1,x_0x_2,x_0x_3,x_0x_4,x_0x_5 \}$.
\vskip 2mm

$\cdot$\textbf{Case II.1.a:} $M_1=x_0x_4$ and $M_2=x_0x_5$.

By $\rho_7=(7,1,1,1,-5,-5)$-stability, $Q_3$ has a term $\geqslant x_3^2$. Suppose $\Lambda$ is not $\chi$-stable, then
\begin{equation*}
\left(\begin{array}{rl}
a+e \\
2b
\end{array}\right)
+\left(\begin{array}{rl}
a+f \\
2c
\end{array}\right)
+ 2d \le 0.
\end{equation*}

Then $\chi=(4k,k,k,-2k,-2k,-2k)$. This contradicts to $\rho_8$-stability.
\vskip 2mm

$\cdot$\textbf{Case II.1.b:} $M_1=x_0x_3$ and $M_2=x_0x_5$.

By $\rho_8=(4,1,1,-2,-2,-2)$-stability, $Q_3$ has a term $\geqslant x_2x_5$. Suppose $\Lambda$ is not $\chi$-stable, then
\begin{equation*}
\left(\begin{array}{rl}
a+d \\
2b
\end{array}\right)
+ \left(\begin{array}{rl}
a+f \\
2c
\end{array}\right)
+ \left(\begin{array}{rl}
c+f \\
2e
\end{array}\right)
\le 0
\end{equation*}

Then $\chi=(2k,k,0,0,-k,-2k)$. This contradicts to $\rho_{10}$-stability.
\vskip 2mm

$\cdot$\textbf{Case II.1.c:} $M_1=x_0x_3$ and $M_2=x_0x_4$.

By $\rho_8=(4,1,1,-2,-2,-2)$-stability, $Q_3$ has a term $\geqslant x_2x_5$. Suppose $\Lambda$ is not $\chi$-stable, then
\begin{equation*}
\left(\begin{array}{rl}
a+d \\
2b
\end{array}\right)
+ \left(\begin{array}{rl}
a+e \\
2c
\end{array}\right)
+ \left(\begin{array}{rl}
c+f \\
2e
\end{array}\right)
\le 0
\end{equation*}

Then $a=b=c=d=e=f=0$. So we get a contradiction.
\vskip 2mm

$\cdot$\textbf{Case II.1.d:} $M_1=x_0x_2$ and $M_2=x_0x_5$.

Suppose $\Lambda$ is not $\chi$-stable, then
\begin{equation*}
\left(\begin{array}{rl}
a+c \\
2b
\end{array}\right)
+ \left(\begin{array}{rl}
a+f \\
2c
\end{array}\right)
+ \left(\begin{array}{rl}
d+f \\
2e
\end{array}\right)
\le 0
\end{equation*}

Then $\chi=(2k,k,0,0,-k,-2k)$. This contradicts to $\rho_{10}$-stability.
\vskip 2mm

$\cdot$\textbf{Case II.1.e:} $M_1=x_0x_2$ and $M_2=x_0x_4$.

Suppose $\Lambda$ is not $\chi$-stable, then
\begin{equation*}
\left(\begin{array}{rl}
a+c \\
2b
\end{array}\right)
+ \left(\begin{array}{rl}
a+e \\
2c
\end{array}\right)
+ \left(\begin{array}{rl}
d+f \\
2e
\end{array}\right)
\le 0
\end{equation*}

Then $\chi=(k,k,k,k,-2k,-2k)$. This contradicts to $\rho_2$-stability.
\vskip 2mm

$\cdot$\textbf{Case II.1.f:} $M_1=x_0x_2$ and $M_2=x_0x_3$.

Suppose $\Lambda$ is not $\chi$-stable, then
\begin{equation*}
\left(\begin{array}{rl}
a+c \\
2b
\end{array}\right)
+ \left(\begin{array}{rl}
a+d \\
2c
\end{array}\right)
+ \left(\begin{array}{rl}
d+f \\
2e
\end{array}\right)
\le 0
\end{equation*}

Then $a=b=c=d=e=f=0$. We get a contradiction.
\vskip 2mm

$\cdot$\textbf{Case II.1.g:} $M_1=x_0x_1$ and $M_2=x_0x_5$.

Suppose $\Lambda$ is not $\chi$-stable, then
\begin{equation*}
(a+b) + \left(\begin{array}{rl}
a+f \\
2c
\end{array}\right)
+ \left(\begin{array}{rl}
d+f \\
2e
\end{array}\right)
\le 0
\end{equation*}

Then $a=b=c=d=e=f=0$. We get a contradiction.
\vskip 2mm

$\cdot$\textbf{Case II.1.h:} $M_1=x_0x_1$ and $M_2=x_0x_4$.

Suppose $\Lambda$ is not $\chi$-stable, then
\begin{equation*}
(a+b) + \left(\begin{array}{rl}
a+e \\
2c
\end{array}\right)
+ \left(\begin{array}{rl}
d+f \\
2e
\end{array}\right)
\le 0
\end{equation*}

Then $a=b=c=d=e=f=0$. We get a contradiction.
\vskip 2mm

$\cdot$\textbf{Case II.1.i:} $M_1=x_0x_1$ and $M_2=x_0x_3$.

Suppose $\Lambda$ is not $\chi$-stable, then
\begin{equation*}
(a+b) + \left(\begin{array}{rl}
a+d \\
2c
\end{array}\right)
+ \left(\begin{array}{rl}
d+f \\
2e
\end{array}\right)
\le 0
\end{equation*}

Then $a=b=c=d=e=f=0$. We get a contradiction.
\vskip 2mm

$\cdot$\textbf{Case II.1.j:} $M_1=x_0x_1$ and $M_2=x_0x_2$.

Suppose $\Lambda$ is not $\chi$-stable, then
\begin{equation*}
(a+b) + (a+c) + \left(\begin{array}{rl}
d+f \\
2e
\end{array}\right)
\le 0
\end{equation*}

Then $a=b=c=d=e=f=0$. We get a contradiction.
\vskip 2mm

$\bullet$\textbf{Case II.2:} $Q_2$ has no term $m_2\geqslant x_2^2$ (i.e. $(Q_2,Q_3)\subset (x_3,x_4,x_5)$). Then

$(1)$ $Q_3$ has a term $\geqslant x_4^2$.

$(2)$ $Q_2$ has a term $\geqslant x_3^2$.

$(3)$ $M_3\geqslant x_2x_5$.

$(4)$ $Q_1$ has a term $\geqslant x_1^2$.

$(5)$ $M_1, M_2\in \{x_0x_1,x_0x_2,x_0x_3,x_0x_4,x_0x_5 \}$.
\vskip 2mm

$\cdot$\textbf{Case II.2.a:} $M_1=x_0x_4$ and $M_2=x_0x_5$.

By $\rho_7=(7,1,1,1,-5,-5)$-stability, $Q_3$ has a term $\geqslant x_3^2$. And by $\rho_9=(3,1,1,-1,-1,-3)$-stability, $Q_3$ has a term $\geqslant x_2x_4$. Suppose $\Lambda$ is not $\chi$-stable, then
\begin{equation*}
\left(\begin{array}{rl}
a+e \\
2b
\end{array}\right)
+ \left(\begin{array}{rl}
a+f \\
2d
\end{array}\right)
+ \left(\begin{array}{rl}
c+e \\
2d
\end{array}\right)
\le 0
\end{equation*}

Then $a=b=c=d=e=f=0$. We get a contradiction.
\vskip 2mm

$\cdot$\textbf{Case II.2.b:} $M_1=x_0x_3$ and $M_2=x_0x_5$.

By $\rho_9=(3,1,1,-1,-1,-3)$-stability, $Q_3$ has a term $\geqslant x_2x_4$. Suppose $\Lambda$ is not $\chi$-stable, then
\begin{equation*}
\left(\begin{array}{rl}
a+d \\
2b
\end{array}\right)
+ \left(\begin{array}{rl}
a+f \\
2d
\end{array}\right)
+ (c+e) \le 0
\end{equation*}

Then $a=b=c=d=e=f=0$. We get a contradiction.
\vskip 2mm

$\cdot$\textbf{Case II.2.c:} $M_1=x_0x_3$ and $M_2=x_0x_4$.

Suppose $\Lambda$ is not $\chi$-stable, then
\begin{equation*}
\left(\begin{array}{rl}
a+d \\
2b
\end{array}\right)
+ \left(\begin{array}{rl}
a+e \\
2d
\end{array}\right)
+ \left(\begin{array}{rl}
c+f \\
2e
\end{array}\right)
\le 0
\end{equation*}

Then $a=b=c=d=e=f=0$. We get a contradiction.
\vskip 2mm

$\cdot$\textbf{Case II.2.d:} $M_1=x_0x_2$ and $M_2=x_0x_5$.

If $Q_2$ has a term $\geqslant x_1x_3$, then $\Lambda$ is $\chi$-stable.
Indeed, if $\Lambda$ is not $\chi$-stable, then
\begin{equation*}
\left(\begin{array}{rl}
a+c \\
2b
\end{array}\right)
+ \left(\begin{array}{rl}
a+f \\
b+d
\end{array}\right)
+ \left(\begin{array}{rl}
c+f \\
2e
\end{array}\right)
\le 0
\end{equation*}

Then $a=b=c=d=e=f=0$. We get a contradiction.
\vskip 2mm

Now assume $Q_2$ has no term $\geqslant x_1x_3$. Then by $\rho_{10}=(2,1,0,0,-1,-2)$-stability, $Q_3$ has a term $\geqslant x_1x_5$ or $\geqslant x_3x_4$. If $Q_3$ has a term $\geqslant x_1x_5$, then $\Lambda$ is $\chi$-stable.
Indeed, if $\Lambda$ is not $\chi$-stable, then
\begin{equation*}
\left(\begin{array}{rl}
a+c \\
2b
\end{array}\right)
+ \left(\begin{array}{rl}
a+f \\
2d
\end{array}\right)
+ \left(\begin{array}{rl}
b+f \\
2e
\end{array}\right)
\le 0
\end{equation*}

Then $\chi=(2k,2k,2k,-k,-k,-4k)$. This contradicts to $\rho_6$-stability.
\vskip 2mm

If $Q_3$ has a term $\geqslant x_3x_4$, then $\Lambda$ is $\chi$-stable.
Indeed, if $\Lambda$ is not $\chi$-stable, then
\begin{equation*}
\left(\begin{array}{rl}
a+c \\
2b
\end{array}\right)
+ \left(\begin{array}{rl}
a+f \\
2d
\end{array}\right)
+ \left(\begin{array}{rl}
c+f \\
d+e
\end{array}\right)
\le 0
\end{equation*}

Then $\chi=(2k,2k,2k,-k,-k,-4k)$. This contradicts to $\rho_6$-stability.
\vskip 2mm

$\cdot$\textbf{Case II.2.e:} $M_1=x_0x_2$ and $M_2=x_0x_4$.

Suppose $\Lambda$ is not $\chi$-stable, then
\begin{equation*}
\left(\begin{array}{rl}
a+c \\
2b
\end{array}\right)
+ \left(\begin{array}{rl}
a+e \\
2d
\end{array}\right)
+ \left(\begin{array}{rl}
c+f \\
2e
\end{array}\right)
\le 0
\end{equation*}

Then $a=b=c=d=e=f=0$. We get a contradiction.
\vskip 2mm

$\cdot$\textbf{Case II.2.f:} $M_1=x_0x_2$ and $M_2=x_0x_3$.

Suppose $\Lambda$ is not $\chi$-stable, then
\begin{equation*}
\left(\begin{array}{rl}
a+c \\
2b
\end{array}\right)
+ (a+d)
+ \left(\begin{array}{rl}
c+f \\
2e
\end{array}\right)
\le 0
\end{equation*}

Then $a=b=c=d=e=f=0$. We get a contradiction.
\vskip 2mm

$\cdot$\textbf{Case II.2.g:} $M_1=x_0x_1$ and $M_2=x_0x_5$.

Suppose $\Lambda$ is not $\chi$-stable, then
\begin{equation*}
(a+b)
+ \left(\begin{array}{rl}
a+f \\
2d
\end{array}\right)
+ \left(\begin{array}{rl}
c+f \\
2e
\end{array}\right)
\le 0
\end{equation*}

Then $\chi=(2k,2k,2k,-k,-k,-4k)$. This contradicts to $\rho_6$-stability.
\vskip 2mm

$\cdot$\textbf{Case II.2.h:} $M_1=x_0x_1$ and $M_2=x_0x_4$.

Suppose $\Lambda$ is not $\chi$-stable, then
\begin{equation*}
(a+b)
+ \left(\begin{array}{rl}
a+e \\
2d
\end{array}\right)
+ \left(\begin{array}{rl}
c+f \\
2e
\end{array}\right)
\le 0
\end{equation*}

Then $a=b=c=d=e=f=0$. We get a contradiction.
\vskip 2mm

$\cdot$\textbf{Case II.2.i:} $M_1=x_0x_1$ and $M_2=x_0x_3$.

Suppose $\Lambda$ is not $\chi$-stable, then
\begin{equation*}
(a+b)+(a+d)
+ \left(\begin{array}{rl}
c+f \\
2e
\end{array}\right)
\le 0
\end{equation*}

Then $a=b=c=d=e=f=0$. We get a contradiction.
\vskip 3mm

$\odot$\textbf{Case III: $F_1$ is in the base locus.}
By Lemma 0.1, we have the following conditions:

$(1)$ $Q_3$ has a term $\geqslant x_4^2$.

$(2)$ $Q_2$ has a term $\geqslant x_3^2$.

$(3)$ $Q_1$ has a term $\geqslant x_2^2$ and either $Q_2$ has a term $\geqslant x_2^2$ or $M_3\geqslant x_2x_5$.

$(4)$ $Q_1$ has no term $\geqslant x_1^2$(i.e. $Q_1,Q_2,Q_3\in (x_2,x_3,x_4,x_5)$) and $M_3\geqslant x_1x_5$.

$(5)$ $M_1, M_2\in \{x_0x_2,x_0x_3,x_0x_4,x_0x_5 \}$.
\vskip 1mm

$\bullet$\textbf{Case III.1:} $Q_2$ has a term $\geqslant x_2^2$. Then

$(1)$ $Q_3$ has a term $\geqslant x_4^2$.

$(2)$ $\times$

$(3)$ $Q_1$ has a term $\geqslant x_2^2$ and $Q_2$ has a term $\geqslant x_2^2$.

$(4)$ $Q_1,Q_2,Q_3\in (x_2,x_3,x_4,x_5)$ and $M_3\geqslant x_1x_5$.

$(5)$ $M_1, M_2\in \{x_0x_2,x_0x_3,x_0x_4,x_0x_5 \}$.
\vskip 2mm

$\cdot$\textbf{Case III.1.a:} $M_1=x_0x_4$ and $M_2=x_0x_5$.

By $\rho_7=(7,1,1,1,-5,-5)$-stability, $Q_3$ has a term $\geqslant x_3^2$.
If $Q_2$ has a term $\geqslant x_1x_4$, then $\Lambda$ is $\chi$-stable.
Indeed, if $\Lambda$ is not $\chi$-stable, then
\begin{equation*}
\left(\begin{array}{rl}
a+e \\
2c
\end{array}\right)
+ \left(\begin{array}{rl}
a+f \\
2c \\
b+e
\end{array}\right)
+\left(\begin{array}{rl}
b+f \\
2d
\end{array}\right)
\le 0
\end{equation*}

Then $\chi=(k,k,0,0,-k,-k)$. This contradicts to $\rho_{12}$-stability.
\vskip 2mm

Now assume $Q_2$ has no term $\geqslant x_1x_4$, then by $\rho_{11}=(5,5,-1,-1,-1,-7)$-stability, $Q_3$ has a term $\geqslant x_1x_4$. Suppose $\Lambda$ is not $\chi$-stable, then
\begin{equation*}
\left(\begin{array}{rl}
a+e \\
2c
\end{array}\right)
+ \left(\begin{array}{rl}
a+f \\
2c
\end{array}\right)
+\left(\begin{array}{rl}
b+e \\
2d
\end{array}\right)
\le 0
\end{equation*}

Then $\chi=(k,k,0,0,-k,-k)$. This contradicts to $\rho_{12}$-stability.
\vskip 2mm

$\cdot$\textbf{Case III.1.b:} $M_1=x_0x_3$ and $M_2=x_0x_5$.

Suppose $\Lambda$ is not $\chi$-stable, then
\begin{equation*}
\left(\begin{array}{rl}
a+d \\
2c
\end{array}\right)
+ \left(\begin{array}{rl}
a+f \\
2c
\end{array}\right)
+\left(\begin{array}{rl}
b+f \\
2e
\end{array}\right)
\le 0
\end{equation*}

Then $\chi=(5k,5k,-k,-k,-k,-7k)$. This contradicts to $\rho_{11}$-stability.
\vskip 2mm

$\cdot$\textbf{Case III.1.c:} $M_1=x_0x_3$ and $M_2=x_0x_4$.

Suppose $\Lambda$ is not $\chi$-stable, then
\begin{equation*}
\left(\begin{array}{rl}
a+d \\
2c
\end{array}\right)
+ \left(\begin{array}{rl}
a+e \\
2c
\end{array}\right)
+\left(\begin{array}{rl}
b+f \\
2e
\end{array}\right)
\le 0
\end{equation*}

Then $a=b=c=d=e=f=0$. We get a contradiction.
\vskip 2mm

$\cdot$\textbf{Case III.1.d:} $M_1=x_0x_2$ and $M_2=x_0x_5$.

Suppose $\Lambda$ is not $\chi$-stable, then
\begin{equation*}
(a+c)
+ \left(\begin{array}{rl}
a+f \\
2c
\end{array}\right)
+\left(\begin{array}{rl}
b+f \\
2e
\end{array}\right)
\le 0
\end{equation*}

Then $\chi=(5k,5k,-k,-k,-k,-7k)$. This contradicts to $\rho_{11}$-stability.
\vskip 2mm

$\cdot$\textbf{Case III.1.e:} $M_1=x_0x_2$ and $M_2=x_0x_4$.

Suppose $\Lambda$ is not $\chi$-stable, then
\begin{equation*}
(a+c)
+ \left(\begin{array}{rl}
a+e \\
2c
\end{array}\right)
+\left(\begin{array}{rl}
b+f \\
2e
\end{array}\right)
\le 0
\end{equation*}

Then $a=b=c=d=e=f=0$. We get a contradiction.
\vskip 2mm

$\cdot$\textbf{Case III.1.f:} $M_1=x_0x_2$ and $M_2=x_0x_3$.

Suppose $\Lambda$ is not $\chi$-stable, then
\begin{equation*}
(a+c)
+ \left(\begin{array}{rl}
a+d \\
2c
\end{array}\right)
+\left(\begin{array}{rl}
b+f \\
2e
\end{array}\right)
\le 0
\end{equation*}

Then $a=b=c=d=e=f=0$. We get a contradiction.
\vskip 2mm

$\bullet$\textbf{Case III.2:} $Q_2$ has no term $\geqslant x_2^2$.(i.e. $(Q_2,Q_3)\subset (x_3,x_4,x_5)$).
Then

$(1)$ $Q_3$ has a term $\geqslant x_4^2$.

$(2)$ $Q_2$ has a term $\geqslant x_3^2$.

$(3)$ $Q_1$ has a term $\geqslant x_2^2$.

$(4)$ $Q_1,Q_2,Q_3\in (x_2,x_3,x_4,x_5)$ and $M_3\geqslant x_1x_5$.

$(5)$ $M_1, M_2\in \{x_0x_2,x_0x_3,x_0x_4,x_0x_5 \}$.
\vskip 2mm

$\cdot$\textbf{Case III.2.a:} $M_1=x_0x_4$ and $M_2=x_0x_5$.

By $\rho_7=(7,1,1,1,-5,-5)$-stability, $Q_3$ has a term $\geqslant x_3^2$. And by $\rho_9=(3,1,1,-1,-1,-3)$-stability, $Q_3$ has a term $\geqslant x_2x_4$.
If $Q_1$ has a term $\geqslant x_1x_2$, then $\Lambda$ is $\chi$-stable.
Indeed, if $\Lambda$ is not $\chi$-stable, then
\begin{equation*}
\left(\begin{array}{rl}
a+e \\
b+c
\end{array}\right)
+\left(\begin{array}{rl}
a+f \\
2d
\end{array}\right)
+\left(\begin{array}{rl}
b+f \\
c+e \\
2d
\end{array}\right)
\le 0
\end{equation*}

Then $\chi=(5k,5k,-k,-k,-k,-7k)$. This contradicts to $\rho_{11}$-stability.
\vskip 2mm

Now assume that $Q_1$ has no term $\geqslant x_1x_2$. If $Q_3$ has a term $\geqslant x_1x_4$, then $\Lambda$ is $\chi$-stable.
Indeed, if $\Lambda$ is not $\chi$-stable, then
\begin{equation*}
\left(\begin{array}{rl}
a+e \\
2c
\end{array}\right)
+\left(\begin{array}{rl}
a+f \\
2d
\end{array}\right)
+\left(\begin{array}{rl}
b+e \\
2d
\end{array}\right)
\le 0
\end{equation*}

Then $\chi=(k,k,0,0,-k,-k)$. This contradicts to $\rho_{12}$-stability.
\vskip 2mm

If $Q_3$ has no term $\geqslant x_1x_4$, then by $\rho_{11}=(5,5,-1,-1,-1,-7)$-stability, $Q_2$ has a term $\geqslant x_1x_4$.

If $Q_3$ also has a term $\geqslant x_2x_3$. Then $\Lambda$ is $\chi$-stable.
Indeed, if $\Lambda$ is not $\chi$-stable, then
\begin{equation*}
\left(\begin{array}{rl}
a+e \\
2c
\end{array}\right)
+\left(\begin{array}{rl}
a+f \\
2d \\
b+e
\end{array}\right)
+\left(\begin{array}{rl}
b+f \\
c+d
\end{array}\right)
\le 0
\end{equation*}

Then $\chi=(k,k,0,0,-k,-k)$. This contradicts to $\rho_{12}$-stability.
\vskip 2mm

Now assume $Q_3$ has no term $\geqslant x_2x_3$. Then by $\rho_{13}=(5,3,1,-1,-3,-5)$-stability, $Q_2$ has a term $\geqslant x_1x_3$. Suppose $\Lambda$ is not $\chi$-stable, then
\begin{equation*}
\left(\begin{array}{rl}
a+e \\
2c
\end{array}\right)
+\left(\begin{array}{rl}
a+f \\
b+d
\end{array}\right)
+\left(\begin{array}{rl}
b+f \\
2d \\
c+e
\end{array}\right)
\le 0
\end{equation*}

Then $a=b=c=d=e=f=0$. We get a contradiction.
\vskip 2mm

$\cdot$\textbf{Case III.2.b:} $M_1=x_0x_3$ and $M_2=x_0x_5$.

By $\rho_9=(3,1,1,-1,-1,-3)$-stability, $Q_3$ has a term $\geqslant x_2x_4$.
If $Q_1$ has a term $\geqslant x_1x_2$, then $\Lambda$ is $\chi$-stable.
Indeed, if $\Lambda$ is not $\chi$-stable, then
\begin{equation*}
\left(\begin{array}{rl}
a+d \\
b+c
\end{array}\right)
+\left(\begin{array}{rl}
a+f \\
2d
\end{array}\right)
+\left(\begin{array}{rl}
b+f \\
c+e
\end{array}\right)
\le 0
\end{equation*}

Then $\chi=(5k,5k,-k,-k,-k,-7k)$. This contradicts to $\rho_{11}$-stability.
\vskip 2mm

Now assume $Q_1$ has no term $\geqslant x_1x_2$. If $Q_3$ has a term $\geqslant x_1x_4$, then $\Lambda$ is $\chi$-stable.
Indeed, if $\Lambda$ is not $\chi$-stable, then
\begin{equation*}
\left(\begin{array}{rl}
a+d \\
2c
\end{array}\right)
+\left(\begin{array}{rl}
a+f \\
2d
\end{array}\right)
+ (b+e) \le 0
\end{equation*}

Then $a=b=c=d=e=f=0$. We get a contradiction.
\vskip 2mm

Assume now $Q_3$ has no term $\geqslant x_1x_4$. By $\rho_{11}=(5,5,-1,-1,-1,-7)$-stability, $Q_2$ has a term $\geqslant x_1x_4$. Suppose $\Lambda$ is not $\chi$-stable, then
\begin{equation*}
\left(\begin{array}{rl}
a+d \\
2c
\end{array}\right)
+\left(\begin{array}{rl}
a+f \\
2d \\
b+e
\end{array}\right)
+\left(\begin{array}{rl}
b+f \\
c+e
\end{array}\right)
\le 0
\end{equation*}

Then $a=b=c=d=e=f=0$. We get a contradiction.
\vskip 2mm

$\cdot$\textbf{Case III.2.c:} $M_1=x_0x_3$ and $M_2=x_0x_4$.

Suppose $\Lambda$ is not $\chi$-stable, then
\begin{equation*}
\left(\begin{array}{rl}
a+d \\
2c
\end{array}\right)
+\left(\begin{array}{rl}
a+e \\
2d
\end{array}\right)
+\left(\begin{array}{rl}
b+f \\
2e
\end{array}\right)
\le 0
\end{equation*}

Then $a=b=c=d=e=f=0$. We get a contradiction.
\vskip 2mm

$\cdot$\textbf{Case III.2.d:} $M_1=x_0x_2$ and $M_2=x_0x_5$.

If $Q_2$ has a term $\geqslant x_1x_4$. Then $\Lambda$ is $\chi$-stable.
Indeed, if $\Lambda$ is not $\chi$-stable, then
\begin{equation*}
(a+c)
+\left(\begin{array}{rl}
a+f \\
2d \\
b+e
\end{array}\right)
+\left(\begin{array}{rl}
b+f \\
2e
\end{array}\right)
\le 0
\end{equation*}

Then $a=b=c=d=e=f=0$. We get a contradiction.
\vskip 2mm

Now assume $Q_2$ has no term $\geqslant x_1x_4$. By $\rho_{11}=(5,5,-1,-1,-1,-7)$-stability, $Q_3$ has a term $\geqslant x_1x_4$. Suppose $\Lambda$ is not $\chi$-stable, then
\begin{equation*}
(a+c)
+\left(\begin{array}{rl}
a+f \\
2d
\end{array}\right)
+ (b+e) \le 0
\end{equation*}

Then $a=b=c=d=e=f=0$. We get a contradiction.
\vskip 2mm

$\cdot$\textbf{Case III.2.e:} $M_1=x_0x_2$ and $M_2=x_0x_4$.

Suppose $\Lambda$ is not $\chi$-stable, then
\begin{equation*}
(a+c)
+\left(\begin{array}{rl}
a+e \\
2d
\end{array}\right)
+\left(\begin{array}{rl}
b+f \\
2e
\end{array}\right)
\le 0
\end{equation*}

Then $a=b=c=d=e=f=0$. We get a contradiction.
\vskip 2mm

$\cdot$\textbf{Case III.2.f:} $M_1=x_0x_2$ and $M_2=x_0x_3$.

Suppose $\Lambda$ is not $\chi$-stable, then
\begin{equation*}
(a+c)+(a+d)
+\left(\begin{array}{rl}
b+f \\
2e
\end{array}\right)
\le 0
\end{equation*}

Then $a=b=c=d=e=f=0$. We get a contradiction.
\end{proof}

\begin{lemma}\label{1to5}
A net $\Lambda$ is not stable with respect to one of $\{\rho_i \}_{i=1}^5$ if and only if it satisfies one of the following conditions with respect to a distinguished flag $F_0\subset F_1\subset F_2\subset F_3\subset F_4\subset \mathbb{P}^5$.
\vskip 2mm

$(1)$ $\rho_1=(1,1,1,1,1,-5):$

$\ \ (a)$ An element of $\Lambda$ contains $F_4$.
\vskip 1mm

$(2)$ $\rho_2=(1,1,1,1,-2,-2):$

$\ \ (a)$ A pencil of $\Lambda$ contains $F_3$, or

$\ \ (b)$ An element of $\Lambda$ is singular along $F_3$.
\vskip 1mm

$(3)$ $\rho_3=(1,1,1,-1,-1,-1):$

$\ \ (a)$ $\Lambda$ contains $F_2$, or

$\ \ (b)$ A pencil of $\Lambda$ contains $F_2$, and an element of the pencil is singular along $F_2$.
\vskip 1mm

$(4)$ $\rho_4=(2,2,-1,-1,-1,-1):$

$\ \ (a)$ $\Lambda$ contains $F_1$, and an element of $\Lambda$ is singular along $F_1$, or

$\ \ (b)$ A pencil of $\Lambda$ is singular along $F_1$.
\vskip 1mm

$(5)$ $\rho_5=(5,-1,-1,-1,-1,-1):$

$\ \ (a)$ $\Lambda$ contains $F_0$, and a pencil of $\Lambda$ is singular at $F_0$.

\end{lemma}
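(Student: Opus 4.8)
The plan is to prove the five biconditionals separately, one for each numerical type $\rho_j$ ($1\le j\le 5$); since ``$\Lambda$ is not stable with respect to one of $\{\rho_i\}_{i=1}^5$'' is precisely the disjunction of the statements ``$\Lambda$ is not $\rho_j$-stable,'' the lemma follows by taking the union. For a fixed $\rho_j$ the Hilbert--Mumford criterion reduces everything to one number: $\Lambda$ is $\rho_j$-stable if and only if the maximal $\rho_j$-weight among the nonzero Pl\"ucker coordinates of $Q_1\wedge Q_2\wedge Q_3$ is strictly positive. So ``not $\rho_j$-stable'' means this maximal weight is $\le 0$, and my task is to compute it and read off its geometric content.

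First I would record the dictionary between the algebra and the flag geometry, valid for every $k$: writing $I_k=(x_{k+1},\dots,x_5)$ for the ideal of $F_k$, a quadric $Q$ contains $F_k$ (that is, $\{Q=0\}\supseteq F_k$) exactly when $Q\in I_k$, because $Q|_{F_k}$ is the sum of the monomials of $Q$ lying in $x_0,\dots,x_k$; and $Q$ is singular along $F_k$ exactly when $Q\in I_k^2$, since vanishing of all the partials $\partial Q/\partial x_i$ on $F_k$ forces every monomial of $Q$ to carry two factors among $x_{k+1},\dots,x_5$. Consequently, setting $p=\dim(\Lambda\cap I_k)$ and $q=\dim(\Lambda\cap I_k^2)$ (so $0\le q\le p\le 3$), the inequalities $p\ge 1,2,3$ say respectively that an element, a pencil, or all of $\Lambda$ contains $F_k$, while $q\ge 1,2$ say that an element, resp.\ a pencil, of $\Lambda$ is singular along $F_k$.

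Next I would compute the maximal weight. For each $j\le 5$ the subgroup $\rho_j$ takes only two values on the coordinates, a positive weight $h$ on $x_0,\dots,x_{5-j}$ and a negative weight $\ell$ on $x_{6-j},\dots,x_5$, with $(6-j)h+j\ell=0$, so the quadratic monomials acquire just three weights $2h>h+\ell>2\ell$ according to whether they have $0$, $1$, or $2$ factors in $x_{6-j},\dots,x_5$; for $k=5-j$ these three strata are the monomials outside $I_k$, those in $I_k$ but not $I_k^2$, and those in $I_k^2$. The maximal weight of a nonzero Pl\"ucker coordinate equals the sum of the $\rho_j$-weights of the initial monomials $in_{\rho_j}(Q_i')$ of a $\rho_j$-adapted (echelon) basis, and the number of these initial monomials of weight $\ge w$ is $3-\dim(\Lambda\cap G^{<w})$, where $G^{<w}$ is the span of the monomials of weight $<w$ — this is the greedy/echelon description of the top nonzero minor. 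Applying it at the two thresholds yields $3-p$ initial monomials of weight $2h$, then $p-q$ of weight $h+\ell$, then $q$ of weight $2\ell$, so the maximal weight is
\begin{equation*}
2h(3-p)+(h+\ell)(p-q)+2\ell q=6h-(h-\ell)(p+q).
\end{equation*}
As $h-\ell>0$, non-$\rho_j$-stability is the single inequality $p+q\ge 6h/(h-\ell)$, and the five types give the thresholds $6h/(h-\ell)=1,2,3,4,5$ for $j=1,\dots,5$.

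Finally I would translate each threshold into the listed conditions using $0\le q\le p\le 3$ and the dictionary: $p+q\ge 1\iff p\ge 1$ gives (1)(a); $p+q\ge 2\iff (p\ge 2)$ or $(q\ge 1)$ gives (2)(a),(b); $p+q\ge 3\iff (p=3)$ or $(p\ge 2,\ q\ge 1)$ gives (3)(a),(b); $p+q\ge 4\iff (p=3,\ q\ge 1)$ or $(q\ge 2)$ gives (4)(a),(b); and $p+q\ge 5\iff (p=3,\ q\ge 2)$ gives (5)(a). I expect the genuine content to sit in the weight computation — verifying that the maximal Pl\"ucker weight is governed solely by the two intersection numbers $p$ and $q$ — whereas the per-type translation is a short finite check; the only place demanding care is matching the boundary cases (for instance $(p,q)=(2,2)$ for $\rho_4$, which is ``a pencil singular along $F_1$'' without $F_1$ being in the base locus) to the precise ``element/pencil/all'' wording of each condition.
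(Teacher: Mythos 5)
Your argument is correct and rests on the same mechanism as the paper's proof, namely the Hilbert--Mumford criterion applied to the triple of initial $\rho_j$-weights of a basis of $\Lambda$ adapted to the weight filtration; the difference is in the organization. The paper, for each $\rho_j$, enumerates by hand the triples of initial weights with non-positive sum and translates each into a flag condition (it writes this out only for $\rho_5$, where the admissible triples are $(4,-2,-2)$ and $(-2,-2,-2)$, and declares cases $(1)$--$(4)$ similar). You instead exploit the fact that each of $\rho_1,\dots,\rho_5$ takes only two values $h>0>\ell$ on the coordinates, so the whole triple is determined by the two intersection numbers $p=\dim(\Lambda\cap I_k)$ and $q=\dim(\Lambda\cap I_k^2)$, and the maximal Pl\"ucker weight collapses to the closed form $6h-(h-\ell)(p+q)$; non-stability then becomes the single inequality $p+q\ge 6h/(h-\ell)$ with thresholds $1,\dots,5$, and the finite check against $0\le q\le p\le 3$ reproduces exactly the stated element/pencil/net conditions (for instance $(p,q)\in\{(3,2),(3,3)\}$ for $\rho_5$ matches the paper's two triples). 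This uniform formula is a genuine gain: it certifies at once that the listed configurations are \emph{all} destabilizing configurations, which the paper's ``the proofs of cases $(1)$--$(4)$ are similar'' leaves to the reader. The only step you should spell out in a final write-up is the standard fact that the maximal weight of a nonzero Pl\"ucker coordinate equals the sum of the initial weights of an echelon basis (nonzero because the corresponding minor is triangular with nonzero diagonal; maximal because any nonzero minor selects, after a permutation, one monomial of weight at most that of each initial monomial); your counting identity ``$3-\dim(\Lambda\cap G^{<w})$ initial monomials of weight $\ge w$'' is correct and, together with that fact, closes the argument.
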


\begin{proof}
In case $(5)$, $(4,-2,-2)$ and $(-2,-2,-2)$ are all triples of initial $\rho_5$-weights with non-positive sum. Any quadric of weight $4$ contains $F_0$ and any quadric of weight $-2$ is singular at $F_0$. So the net with initial $\rho_5$-weights $(4,-2,-2)$ or $(-2,-2,-2)$ has a base point at $F_0$ and contains a pencil of quadrics singular at $F_0$. The proofs of cases $(1)-(4)$ are similar.
\end{proof}

On the basis of this partial analysis, we may already conclude the important fact that a stable net has a pure two-dimensional intersection, and hence defines a connected surface with local complete intersection singularities.

\begin{cor}\label{ci}
If a net of quadrics in $\mathbb{P}^5$ is stable, then the corresponding intersection is connected and purely two-dimensional.
\end{cor}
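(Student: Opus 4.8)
The plan is to deduce everything from the partial stability information already recorded in Lemma~\ref{1to5}, together with elementary dimension and B\'ezout bookkeeping. First I would dispose of connectedness by reducing it to pure dimensionality: the base locus $S=Q_1\cap Q_2\cap Q_3$ is cut out by three forms in $\mathbb{P}^5$, so by Krull's height theorem every nonempty component has dimension $\ge 5-3=2$, and $S\neq\varnothing$ by the projective dimension theorem. Hence it suffices to prove that no component has dimension $\ge 3$: once $S$ is purely two-dimensional the three quadrics form a regular sequence, so $S$ is a positive-dimensional complete intersection in $\mathbb{P}^5$ and is therefore connected (its affine cone is Cohen--Macaulay of dimension $\ge 3$, hence connected). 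Throughout I use that stability forces $\rho_i$-stability for every $SL(6)$-translate of each $\rho_i$, so by Lemma~\ref{1to5} none of the listed degeneracy conditions holds for any flag.

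Next I would rule out a four-dimensional component $Z$. Being a hypersurface contained in each $Q_i$, such a $Z$ is a common component of all three quadrics; if $Z$ is a hyperplane $V(\ell)$ then each $Q_i\in(\ell)$ contains $V(\ell)$, violating $\rho_1$-stability (condition $(1)$ of Lemma~\ref{1to5}), while if $Z$ is an irreducible quadric then $Q_1=Q_2=Q_3=Z$ up to scalars, contradicting that $\Lambda$ is a genuine net. I would then reduce the three-dimensional case to a pair of members: since $Z\subseteq Q_i\cap Q_j$ for any two independent $Q_i,Q_j\in\Lambda$, either $\dim(Q_i\cap Q_j)\ge 4$ --- forcing a common linear factor and hence, as above, a $\rho_1$-violation (a common irreducible quadric factor is impossible for a net) --- or $Q_i\cap Q_j$ is a proper intersection of dimension $3$ and degree $4$, of which $Z$ is an irreducible (reduced) component with $1\le \deg Z\le 4$.

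Finally I would split by $\deg Z$. If $\deg Z=1$ then $Z$ is a $\mathbb{P}^3$ lying in every member, so the pencil $\langle Q_1,Q_2\rangle$ contains a $\mathbb{P}^3$, violating $\rho_2$-stability (condition $(2a)$). If $\deg Z=3$, then $Q_1\cap Q_2=Z\cup Z'$ with $Z'$ a reduced residual component of degree $1$, i.e. again a $\mathbb{P}^3$ contained in the pencil $\langle Q_1,Q_2\rangle$, violating $(2a)$. If $\deg Z=4$ then $Z$ is the whole reduced complete intersection $V(Q_1,Q_2)$, and $Q_3$ vanishes on it; since the complete intersection ideal is saturated with degree-two part $\langle Q_1,Q_2\rangle$, this forces $Q_3\in\langle Q_1,Q_2\rangle$, contradicting $\dim\Lambda=3$. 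The delicate case, and the one I expect to be the main obstacle, is $\deg Z=2$: here $Z$ is a quadric threefold and therefore spans a hyperplane $H=V(\ell)$, on which it is an irreducible quadric $V(q)$. Because $Z\subseteq Q_i$ for all $i$, each restriction $Q_i|_H$ is a degree-two form on $H\cong\mathbb{P}^4$ vanishing on the irreducible hypersurface $V(q)$, hence a scalar multiple of $q$; thus the restriction map $\Lambda\to H^0(H,\mathcal{O}(2))$ has rank $\le 1$, and its kernel is a pencil of members of $\Lambda$ each divisible by $\ell$ and so containing the hyperplane $H$, which again violates $\rho_1$-stability. Assembling the four subcases shows that no three-dimensional component can occur, which combined with the four-dimensional analysis completes the proof.
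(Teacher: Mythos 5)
Your proof is correct, and in the key subcase it takes a genuinely different and arguably cleaner route than the paper. For connectedness the paper invokes the Fulton--Hansen connectedness theorem, whereas you reduce connectedness to pure dimensionality and then use that a positive-dimensional complete intersection is connected (Cohen--Macaulayness of the affine cone); both work, though your version makes connectedness logically dependent on the dimension count you then carry out. The dimension analysis agrees with the paper for a $4$-dimensional component and for $3$-dimensional components of degree $1$ (a $\rho_2$-violation via Lemma~\ref{1to5}(2a)) and degree $2$ (a $\rho_1$-violation; your hyperplane-span argument supplies the justification the paper leaves implicit), and your degree-$4$ case, disposed of by saturation of the complete intersection ideal, is simply excluded without comment in the paper (which only allows $\deg Y'\le 3$). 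The real divergence is at degree $3$: the paper identifies the component as a rational normal scroll, splits into $\mathbb{P}^1\times\mathbb{P}^2$, $S_{0,1,2}$ and $S_{0,0,3}$, and destabilizes each with $\rho_4$ or $\rho_5$; you instead observe that since $Q_1\cap Q_2$ is Cohen--Macaulay of degree $4$, a degree-$3$ component forces a residual component of degree $1$, i.e.\ a linear $\mathbb{P}^3$ contained in the whole pencil $\langle Q_1,Q_2\rangle$, which is exactly condition (2a) of Lemma~\ref{1to5} and hence a $\rho_2$-violation. This bypasses the scroll classification entirely and uses only $\rho_1$ and $\rho_2$ for the entire corollary; the paper's finer analysis, on the other hand, records explicitly \emph{which} one-parameter subgroups destabilize each geometric configuration (e.g.\ that the Segre threefold is exactly $\rho_4$-nonstable), information it reuses in the subsequent remark on strict semistability.
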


\begin{proof}
The proof is basically same as the proof of Corollary 2.8. in \cite{FS} and so we omit details.

Connectedness follows from Fulton-Hansen connectedness theorem \cite{FH}.
%
%
%
Suppose the intersection fails to be purely 2-dimensional. Then either a pencil of quadrics in the net contains a hyperplane, in which case the net is not $\rho_1$-stable, or we may choose a basis $\{Q_1,Q_2,Q_3\}$ of the net such that $Y:=Q_1\cap Q_2$ is a quartic 3-fold and there is an irreducible component $Y'\subset Y$ of degree at most 3 which is contained in $Q_3$. The net is not $\rho_2$-stable (resp., not $\rho_1$-stable) if the degree of $Y'$ is $1$ (resp., $2$).

If $degY'=3$, then $Y'$ is a rational normal scroll. If $Y'$ is smooth, then the net is projectively equivalent to $(x_0x_3-x_1x_2,x_0x_5-x_1x_4,x_2x_5-x_3x_4)$ and is not $\rho_4$-stable. If $Y'$ is singular, then it is either $S_{0,0,3}$ or $S_{0,1,2}$. If $Y'=S_{0,0,3}$, then it is singular along a line. So we must have a pencil of quadrics singular along $F_1$ after the coordinate change. Such a net is not $\rho_4$-stable. If $Y'$ is a cone over $S_{1,2}$ and $F_0$ denotes the vertex of the cone, then we must have a pencil of quadrics singular at $F_0$ and the net contains $F_0$. Such a net is not $\rho_5$-stable.
\end{proof}

\begin{rmk}
The Segre 3-fold $(x_0x_3-x_1x_2,x_0x_5-x_1x_4,x_2x_5-x_3x_4)$ is strictly semistable. It is stabilized by a certain 1-PS acting diagonally with respect to the distinguished basis $\{x_0,x_1,\cdots ,x_5\}$. Indeed, it is stabilized by $(4,2,1,-1,-2,-4)$. By the Kempf-Morisson criterion (Proposition 2.4 in \cite{AFS}), it therefore suffices to check that it is semistable with respect to 1-PS's acting diagonally with respect to this basis.

Now, let $\lambda =(a,b,c,d,e,f)$ be a 1-PS with $a+b+c+d+e+f=0$. Suppose it is unstable with respect to $\lambda$.
Then $\omega_{\lambda}(x_0x_3-x_1x_2)+\omega_{\lambda}(x_0x_5-x_1x_4)+\omega_{\lambda}(x_2x_5-x_3x_4)=max\{a+d,b+c\}+max\{a+f,b+e\}+max\{c+f,d+e\}<0$.
In particular, $0>(a+d)+(b+e)+(c+f)=a+b+c+d+e+f=0$. It is a contradiction.
%
%
%
%
%
%
%
%
%
%
%
%
%
%
%
%
Thus the Segre 3-fold is semistable. But it is not $\rho_4$-stable and so is strictly semistable.
\end{rmk}

\begin{lemma}\label{6to13}
Suppose $\Lambda$ is not $\rho_i$-stable for $i\in \{6,\cdots ,13\}$ but is $\rho_j$-stable for $1\le j\le i-1$. Let $m_1, m_2, m_3$ be the initial monomials of $\Lambda$ with respect to $\rho_i$. Then $(\omega_{\rho_i}(m_1), \omega_{\rho_i}(m_2), \omega_{\rho_i}(m_3))$ must be one of the following triples:
\vskip 2mm

$(6)$ $\rho_6=(2,2,2,-1,-1,-4):$

$\ \ \ \bullet$ $(4,-2,-2)$
\vskip 1mm

$(7)$ $\rho_7=(7,1,1,1,-5,-5):$

$\ \ \ \bullet$ $(2,2,-4)$
\vskip 1mm

$(8)$ $\rho_8=(4,1,1,-2,-2,-2):$

$\ \ \ \bullet$ $(2,2,-4)$
\vskip 1mm

$(9)$ $\rho_9=(3,1,1,-1,-1,-3):$

$\ \ \ \bullet$ $(2,0,-2)$
\vskip 1mm

$(10)$ $\rho_{10}=(2,1,0,0,-1,-2):$

$\ \ \ \bullet$ $(2,0,-2)$
\vskip 1mm

$(11)$ $\rho_{11}=(5,5,-1,-1,-1,-7):$

$\ \ \ \bullet$ $(4,-2,-2)$
\vskip 1mm

$(12)$ $\rho_{12}=(1,1,0,0,-1,-1):$

$\ \ \ \bullet$ $(0,0,0)$
\vskip 1mm

$(13)$ $\rho_{13}=(5,3,1,-1,-3,-5):$

$\ \ \ \bullet$ $(2,0,-2)$

\end{lemma}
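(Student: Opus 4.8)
The plan is to convert the hypothesis ``$\Lambda$ is not $\rho_i$-stable'' into a purely numerical statement about the $\rho_i$-initial monomials, enumerate the finitely many admissible weight triples, and then delete every one except the stated triple using $\rho_j$-stability for $j<i$. The basic dictionary I would record first is the one implicit throughout Section 2: if $(Q_1,Q_2,Q_3)$ is a $\succ_{\rho}$-normalized basis of $\Lambda$ and $m_k=in_{\rho}(Q_k)$, then $m_1\succ_{\rho}m_2\succ_{\rho}m_3$ are distinct, the coefficient matrix on $(m_1,m_2,m_3)$ is unitriangular, so the Plücker coordinate indexed by $(m_1,m_2,m_3)$ is nonzero, and since $\succ_{\rho}$ orders first by weight, $m_1\wedge m_2\wedge m_3$ is the $\succ_{\rho}$-leading term of $Q_1\wedge Q_2\wedge Q_3$; hence no non-vanishing Plücker coordinate has $\rho$-weight exceeding $\sum_k\omega_{\rho}(m_k)$. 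By the Hilbert--Mumford criterion this gives the equivalence that $\Lambda$ is $\rho$-stable if and only if $\omega_{\rho}(m_1)+\omega_{\rho}(m_2)+\omega_{\rho}(m_3)>0$. Consequently the hypothesis is exactly the inequality $\omega_{\rho_i}(m_1)+\omega_{\rho_i}(m_2)+\omega_{\rho_i}(m_3)\le 0$, and it suffices to decide which descending triples $(w_1,w_2,w_3)$ with non-positive sum actually occur.

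Second, for each $i$ I would list the finite set of $\rho_i$-weights of the quadratic monomials $x_ax_b$ (a one-line computation from the numerical type), and write down every descending triple from that set with $w_1+w_2+w_3\le 0$. For instance the $\rho_6$-weights lie in $\{4,1,-2,-5,-8\}$, the $\rho_7$- and $\rho_8$-weights in $\{14,8,2,-4,-10\}$ and $\{8,5,2,-1,-4\}$, and the $\rho_{12}$-weights in $\{2,1,0,-1,-2\}$; in every case the advertised triple has sum exactly $0$, so the nets in question are strictly $\rho_i$-semistable rather than merely unstable. This bounds the problem to a short finite list of competitors for each of the eight subgroups.

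The heart of the argument is the elimination step. For a competing triple the bound $\omega_{\rho_i}(m_k)\le w_k$ forces $Q_k$ into the span of the monomials of $\rho_i$-weight at most $w_k$, which is an explicit monomial subspace and hence an incidence of $Q_k$ (or of a pencil, or of the whole net) with a flag member $F_\ell$ --- containment in $F_\ell$ or singularity along $F_\ell$. I would then bound the $\rho_j$-leading weights of $Q_1,Q_2,Q_3$ for a suitable earlier index $j<i$ and check that these again sum to $\le 0$, contradicting the assumed $\rho_j$-stability; when $j\le 5$ the incidences produced are precisely those catalogued in Lemma~\ref{1to5}. For example, for $\rho_6$ the competitor $(4,1,-5)$ puts $Q_3\in(x_5)$, which is $\rho_1$-unstable, while $(1,1,-2)$ puts $Q_1,Q_2\in(x_3,x_4,x_5)$ and $Q_3\in(x_5)+(x_3,x_4)^2$, forcing all three $\rho_3$-leading weights to be $\le 0$ and hence contradicting $\rho_3$-stability. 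The systematic pattern is that triples with strictly negative sum, or with weight concentrated on the low levels, push the $Q_k$ into large flag-containments already detected by $\rho_1,\dots,\rho_5$, whereas the remaining sum-zero competitors are killed by the intermediate subgroups $\rho_6,\dots,\rho_{i-1}$; only the stated triple survives, and it is genuinely realized by the configurations appearing in the proof of Theorem~\ref{1-PS}.

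The main obstacle is entirely the bookkeeping of this elimination: there is no single clean inequality, and the finer types --- especially $\rho_{10}$ and $\rho_{13}$, whose weight sets are large --- generate many non-positive-sum triples, each of which must be matched individually to a destabilizing $\rho_j$. The work is finite and mechanical once the weight-to-subspace dictionary above is set up, but the delicate point is proving that the advertised triple is the \emph{unique} undominated one rather than merely a possible value, which is what forces one to run through the whole list for each $i$.
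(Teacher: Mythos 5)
Your proposal is correct and follows essentially the same route as the paper: both reduce $\rho_i$-non-stability to non-positivity of the sum of the three $\rho_i$-initial weights, enumerate the descending weight triples with non-positive sum, and eliminate all but the stated one using the constraints that $\rho_j$-stability for $j<i$ imposes on the initial monomials. The paper merely packages the $\rho_1,\dots,\rho_5$ constraints of Lemma~\ref{lemma1} once and for all as numerical lower bounds $(C1)$--$(C5)$ on $(\omega_1,\omega_2,\omega_3)$ and runs the elimination purely at the level of weights (writing out only the $\rho_{13}$ case in full), which is precisely the weight-to-subspace dictionary you describe.
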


\begin{proof}
Consider $\rho_i=(a,b,c,d,e,f)$ for $6\le i\le 13$ and suppose $\omega_1\ge \omega_2\ge \omega_3$ is the triple of $\rho_i$-initial weights of a $\rho_i$-nonstable net $\Lambda$. We can translate Lemma~\ref{lemma1} into the following conditions:
\vskip 1mm

$(C1)$ $\omega_3\ge 2e$.

$(C2)$ $\omega_2\ge 2d$ and $\omega_3\ge d+f$.

$(C3)$ $\omega_1\ge 2c$. Moreover, if $\omega_2 < 2c$, then $\omega_3\ge c+f$.

$(C4)$ $\omega_2\ge b+f$. Moreover, if $\omega_1 < 2b$, then $\omega_3\ge b+f$.

$(C5)$ If $\omega_1\neq 2a$, then $\omega_1\ge a+e$ and $\omega_2\ge a+f$.
\vskip 1mm

Now for each $\rho_i$, we list all triples of $\rho_i$-initial weights with non-positive sum and satisfy $(C1)-(C5)$. We will do only case (13), by far the most involved, and the remaining cases can be obtained more easily.

The set of possible $\rho_{13}=(5,3,1,-1,-3,-5)$-weights of quadratic monomials is
\[\{10,8,6,4,2,0,-2,-4,-6,-8,-10\}.\]
Suppose $\omega_1\ge \omega_2\ge \omega_3$ are initial $\rho_{13}$-weights of $\rho_{13}$-nonstable net $\Lambda$ and $\Lambda$ is $\rho_i$-stable for $1\le i\le 12$. By $(C2)$, $\omega_2\ge -2$ and $\omega_3\ge -6$. If $\omega_1=10$, then there is no triple with non-positive sum.
Suppose $\omega_1<10$, then $\omega_1\ge 2$ and $\omega_2\ge 0$ by $(C5)$. The triples with non-positive sum satisfying these conditions are

$\ \ \bullet$ $(6,0,-6)$, which violates $(C3)$;

$\ \ \bullet$ $(4,2,-6)$, which violates $(C4)$;

$\ \ \bullet$ $(4,0,-6)$, which violates $(C4)$;

$\ \ \bullet$ $(4,0,-4)$, which violates $(C4)$;

$\ \ \bullet$ $(2,2,-4)$, which violates $(C4)$;

$\ \ \bullet$ $(2,2,-6)$, which violates $(C4)$;

$\ \ \bullet$ $(2,0,-4)$, which violates $(C4)$;

$\ \ \bullet$ $(2,0,-6)$, which violates $(C4)$;

$\ \ \bullet$ $(2,0,-2)$.
\end{proof}

\section{A net of quadrics and the associated discriminant}

To a net $\Lambda=(Q_1, Q_2, Q_3)$ of quadrics in $\mathbb{P}^5$, we associate the sextic polynomial $\Delta(\Lambda)=det(xQ_1+yQ_2+zQ_3)$ which is called the discriminant of $\Lambda$.

While the GIT stability of a pencil of quadrics can be read off the GIT stability of the associated discriminant locus (cf. \cite{AM}), the GIT analysis for nets of quadrics turns out to be more involved. In particular, as following example shows, there is no natural correspondence between $SL(6)$-stability of a net and $SL(3)$-stability of the associated discriminant curve.

\begin{exm}\label{exm1}
Let $\Lambda=(2 x_0x_4+2 x_1x_3,2 x_0x_5+2 x_1x_4+2 x_2x_3,2 x_1x_5-2 x_2x_4)$. Then $\Lambda$ is strictly semistable. It is stabilized by a certain 1-PS acting diagonally with respect to the distinguished basis $\{x_0,x_1,\cdots ,x_5\}$. Indeed, it is stabilized by $(3,2,1,-1,-2,-3)$. By the Kempf-Morisson criterion (Proposition 2.4 in \cite{AFS}), it therefore suffices to check that it is semistable with respect to 1-PS's acting diagonally with respect to this basis. Suppose it is unstable with respect to a 1-PS $\lambda=(a,b,c,d,e,f)$ with $a+b+c+d+e+f=0$. Then $\omega_{\lambda}(2 x_0x_4+2 x_1x_3)+\omega_{\lambda}(2 x_0x_5+2 x_1x_4+2 x_2x_3)+\omega_{\lambda}(2 x_1x_5-2 x_2x_4)=max\{a+e,b+d\}+max\{a+f,b+e,c+d\}+max\{b+f,c+e\}<0$.
In particular, $0>(a+e)+(c+d)+(b+f)=a+b+c+d+e+f=0$. It is a contradiction.

Thus $\Lambda$ is semistable. But the discriminant sextic $\Delta(\Lambda)$ of $\Lambda$ is $-y^6$, which is unstable under the natural $SL(3)$-action on the space of plane sextic since $\omega_{\chi}(-y^6)=-6<0$ for $\chi=(2,-1,-1)$.
\end{exm}

\begin{prop}
If the discriminant of $\Lambda$ is identically zero then $\Lambda$ is unstable.
\end{prop}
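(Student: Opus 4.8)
The plan is to translate the hypothesis into a statement about singular members and then force $\Lambda$ into one of the destabilizing configurations catalogued in Lemma~\ref{1to5}. Writing $Q_i=x^{T}A_i x$ with $A_i$ symmetric and $M(x,y,z)=xA_1+yA_2+zA_3$, the hypothesis $\Delta(\Lambda)=\det M\equiv 0$ means exactly that every quadric in the net is singular, i.e. $M$ is singular over $\mathbb{C}(x,y,z)$. Clearing denominators and removing the content, I get a homogeneous $v(x,y,z)\in\mathbb{C}[x,y,z]^{6}$ of some minimal degree $d\ge 0$ with coprime entries and $M\,v\equiv 0$; this is the Steinerian, so that $Q_{(x:y:z)}$ is singular at the point $[v(x:y:z)]$. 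One should read ``unstable'' here as ``not stable'': this is the sharp conclusion, since a net whose base locus contains a plane already meets condition $(3)$ of Lemma~\ref{1to5} yet can be strictly semistable.

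The low-degree cases are transparent and I would treat them first. If $d=0$ then $v$ is a constant $p$ with $A_ip=0$ for all $i$, so $\Lambda$ has a common singular point; moving $p$ to $F_0$, every monomial of every $Q_i$ has $\rho_5$-weight $-2$, the initial $\rho_5$-weights are $(-2,-2,-2)$, and $\Lambda$ is already $\rho_5$-unstable. If $d=1$, write $v=xu_1+yu_2+zu_3$; expanding $Mv\equiv 0$ gives $A_iu_i=0$ and $A_iu_j+A_ju_i=0$, whence a short computation yields $u_i^{T}A_k u_j=0$ for all $i,j,k$. Thus $U:=\langle u_1,u_2,u_3\rangle$ is totally isotropic for the net and $\mathbb{P}(U)$ lies in the base locus, with $\dim U\in\{2,3\}$ (coprimality excludes $\dim U=1$).

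To finish the case $d=1$ I match $\dim U$ to Lemma~\ref{1to5}. If $\dim U=3$, then $\mathbb{P}(U)\supseteq F_2$ after a coordinate change, so $\Lambda$ contains $F_2$ and is not $\rho_3$-stable by part $(3)$. If $\dim U=2$, then $\mathbb{P}(U)=F_1$ lies in the base locus; here the $2\times 4$ matrix $W(r)$ with rows $A_ru_1,A_ru_2$ (which land in $U^{\perp}$) satisfies $\operatorname{rank}W(r)\le 1$ for every $r$, because the identity $A_r v(r)=0$ exhibits a linear dependence between its rows. A linear family of rank-$\le 1$ matrices has either a fixed left factor, which forces $A_k w=0$ for a common $w\in U$ and returns us to the $\rho_5$-unstable case, or a fixed right factor $b_0$, in which case $r\mapsto(A_ru_1,A_ru_2)\in\langle b_0\rangle^{2}$ has a nonzero kernel and some member $Q_{r_0}$ is singular along all of $F_1$; then $\Lambda$ contains $F_1$ and has a member singular along $F_1$, so it is not $\rho_4$-stable by part $(4)$. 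In every case $\Lambda$ fails to be stable.

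The remaining, and genuinely harder, case is $d\ge 2$, where the Steinerian image $Z=\overline{\{[v(r)]\}}$ is a nonlinear curve or surface. One still has $v(r)^{T}A_kv(r)\equiv 0$, so $Z$ lies in the base locus, but now the linear span of $Z$ need not be isotropic---a Veronese-type $Z$ lies on, without filling, an isotropic subspace---so the argument above does not apply verbatim. I would control this by restricting the net to a general line $L\subset\mathbb{P}^2$, i.e. to a general pencil inside $\Lambda$, and invoking Kronecker's classification of the symmetric pencil $M|_{L}$: since $\det M|_{L}\equiv 0$, it has singular (minimal-index) blocks, and the constraints imposed by their odd sizes inside the ambient size $6$ should be shown to force either a common singular point or a linear isotropic subspace carrying a singular member. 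Bounding these minimal indices so as to guarantee a genuinely linear degeneracy is the crux; granting it, every net with $\Delta\equiv 0$ meets one of $\rho_3,\rho_4,\rho_5$ and is therefore not stable.
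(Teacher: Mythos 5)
Your proposal has a genuine gap: the case $d\ge 2$ (Steinerian of degree at least two) is not proved, and you say so yourself --- ``granting it'' is doing all the work. This is not a routine loose end. A net in which every member is singular but the singular points sweep out a nonlinear (e.g.\ Veronese-type) locus is exactly the configuration that resists the linear-algebra argument of your $d\le 1$ cases, and reducing it to a common singular point or a linear isotropic subspace is the substance of the statement. The paper sidesteps this entirely by quoting Wall's classification of nets of singular quadrics (Corollary 1 of \cite{W1}): every such net either has a common singular point, or restricts to a common hyperplane with all members singular along a fixed line, or restricts to a common $3$-plane with all members singular along a fixed plane. From the resulting matrix normal forms the paper reads off destabilizing one-parameter subgroups ($\rho_5$, $\rho_{11}$, and $(3,3,3,-1,-4,-4)$ respectively). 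Your proposed substitute --- restricting to a general pencil and invoking Kronecker's theory of singular symmetric pencils --- is a plausible route toward reproving Wall's result, but as written it is a plan, not a proof.

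There is a second, independent problem: you prove (where you prove anything) only that $\Lambda$ is \emph{not stable}, while the proposition asserts $\Lambda$ is \emph{unstable}, and the paper's proof delivers the stronger conclusion: in each of Wall's three cases every Pl\"ucker coordinate of the net has strictly negative weight for the exhibited $1$-PS. Your justification for downgrading the conclusion --- that a net whose base locus contains a plane meets condition $(3)$ of Lemma~\ref{1to5} yet can be strictly semistable --- is a non sequitur: containing a plane in the base locus does not force $\Delta\equiv 0$ (indeed, when the net contains $F_2$ the paper computes $\Delta=-(\det A)^2$, which is generically nonzero), so such examples do not bear on the sharpness of this proposition. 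In your $d=1$, $\dim U=3$ case you would need to use the full strength of the relations $A_iu_i=0$, $A_iu_j+A_ju_i=0$ (not merely ``the net contains a plane'') to exhibit a $1$-PS with strictly negative weight; as written, ``not $\rho_3$-stable'' via Lemma~\ref{1to5}(3)(a) does not suffice.
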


\begin{proof}
Since a net $\Lambda$ of quadrics in $\mathbb{P}^5$ consists of singular quadrics, we have one of the followings (cf. Corollary 1 in \cite{W1}):

(1) Quadrics in $\Lambda$ have a common singular point; or

(2) Restricted to a common hyperplane, the quadrics in $\Lambda$ are singular along a line; or

(3) Restricted to a common 3-dimensional linear space, the quadrics in $\Lambda$ are singular along a plane.

In case (3), we can take coordinates $x_0, x_1,\cdots , x_5$ such that the common 3-dimensional linear space $T$ is $x_4=x_5=0$ and the plane in $T$ is defined by $x_3=0$. So the matrix with respect to any quadric in $\Lambda$ has a form :
\begin{equation*}
\begin{bmatrix}
0 & 0 & 0 & 0 & \ast & \ast \\
0 & 0 & 0 & 0 & \ast & \ast \\
0 & 0 & 0 & 0 & \ast & \ast \\
0 & 0 & 0 & \ast & \ast & \ast \\
\ast & \ast & \ast & \ast & \ast & \ast \\
\ast & \ast & \ast & \ast & \ast & \ast
\end{bmatrix}
.
\end{equation*}
(Here $0$ denote a zero entry and $\ast$ that no restriction is imposed.)

Then $\Lambda$ is $(3,3,3,-1,-4,-4)$-unstable.

Similarly, in cases (1) and (2), we may assume the matrix with respect to any quadric in $\Lambda$ has a form :
\begin{equation*}
\begin{bmatrix}
0 & 0 & 0 & 0 & 0 & 0 \\
0 & \ast & \ast & \ast & \ast & \ast \\
0 & \ast & \ast & \ast & \ast & \ast \\
0 & \ast & \ast & \ast & \ast & \ast \\
0 & \ast & \ast & \ast & \ast & \ast \\
0 & \ast & \ast & \ast & \ast & \ast
\end{bmatrix}
\ \text{and} \
\begin{bmatrix}
0 & 0 & 0 & 0 & 0 & \ast \\
0 & 0 & 0 & 0 & 0 & \ast \\
0 & 0 & \ast & \ast & \ast & \ast \\
0 & 0 & \ast & \ast & \ast & \ast \\
0 & 0 & \ast & \ast & \ast & \ast \\
\ast & \ast & \ast & \ast & \ast & \ast
\end{bmatrix}
,\ \text{respectively}.
\end{equation*}
(Here $0$ denote a zero entry and $\ast$ that no restriction is imposed.)

And then $\Lambda$ is $\rho_5$-unstable and $\rho_{11}$-unstable, respectively.
\end{proof}

\vskip 2mm
If a net $\Lambda$ defines a complete intersection surface $S$, we will use the net $\Lambda$ and the defining surface $S$ interchangeably. In particular, if a net is stable, then we will use the net and the defining surface interchangeably because a stable net defines a complete intersection surface by Corollary~\ref{ci},.

In this section, we use the instability results of the previous section to discuss some connections between a net of quadrics and the associated discriminant sextic curve.

\begin{theorem}\label{normal}
Suppose a net $\Lambda$ of quadrics in $\mathbb{P}^5$ defines a complete intersection normal surface $S$. If the discriminant $\Delta(S)$ of $S$ is stable then $\Lambda$ is stable.
\end{theorem}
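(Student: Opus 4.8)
The plan is to argue by contraposition: assuming that $S$ is normal and that $\Lambda$ is \emph{not} stable, I will exhibit a nontrivial one-parameter subgroup of $SL(3)$ with respect to which the sextic $\Delta(S)$ fails to be stable. By Theorem~\ref{1-PS} and the Hilbert--Mumford criterion, non-stability of $\Lambda$ means there is a normalized 1-PS $\rho=(a_0,\dots,a_5)$ of $SL(6)$ for which $\Lambda$ is not $\rho$-stable; fixing a $\rho$-adapted basis $(Q_1,Q_2,Q_3)$ with $\rho$-initial monomials of weights $\omega_1\ge\omega_2\ge\omega_3$, this non-stability is exactly the inequality $\omega_1+\omega_2+\omega_3\le 0$. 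After Theorem~\ref{1-PS} I may moreover assume $\rho$ is of one of the thirteen numerical types listed there, so that the possible triples $(\omega_1,\omega_2,\omega_3)$ are constrained by Lemmas~\ref{1to5} and~\ref{6to13}.

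The key computation is a weight estimate for the monomials of the discriminant. Writing $\Delta(S)=\det(xM_1+yM_2+zM_3)$ and expanding the determinant as a sum over permutations, a monomial $x^iy^jz^k$ arises by selecting, for some permutation, the $x$-part $(M_1)_{pq}$ from $i$ of the entries, the $y$-part from $j$, and the $z$-part from $k$. Each selected entry $(M_s)_{pq}$ is nonzero only if $x_px_q$ occurs in $Q_s$, whence its $\rho$-weight $a_p+a_q$ is at most $\omega_s$. Summing the weights of all six selected entries gives $\sum_p a_p+\sum_q a_q=0$ on the one hand and at most $i\omega_1+j\omega_2+k\omega_3$ on the other; therefore every monomial $x^iy^jz^k$ occurring in $\Delta(S)$ satisfies $i\omega_1+j\omega_2+k\omega_3\ge 0$.

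Now I distinguish two cases. If $\omega_1,\omega_2,\omega_3$ are not all equal, let $\chi$ be the 1-PS of $SL(3)$ whose weights are $(-\omega_1,-\omega_2,-\omega_3)$ corrected to trace zero; since the $\omega_i$ are not all equal this is nonzero. By the displayed inequality together with $\omega_1+\omega_2+\omega_3\le0$, the $\chi$-weight of every monomial occurring in $\Delta(S)$ is $-(i\omega_1+j\omega_2+k\omega_3)+2(\omega_1+\omega_2+\omega_3)\le0$, so no coordinate of $\Delta(S)$ has positive $\chi$-weight and $\Delta(S)$ is not stable. This settles every destabilizing type whose initial weight triple is nonconstant, which by Lemmas~\ref{1to5} and~\ref{6to13} is all of them save the constant triples. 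If the triple is constant, say $\omega_1=\omega_2=\omega_3=w$, then $3w\le0$ while the inequality forces $6w\ge0$ on any monomial that actually occurs; hence either $\Delta(S)\equiv0$, which is certainly not stable, or $w=0$.

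The remaining possibility $w=0$ is exactly where normality enters, and where I expect the main difficulty to lie. Here the vanishing of every positive $\rho$-weight entry of $M_1,M_2,M_3$ imposes a block shape: setting $P=\{i:a_i>0\}$, all three quadrics vanish on the linear subspace $F_P=\mathbb{P}(\langle x_i:i\in P\rangle)$, so $F_P\subseteq S$, and along $F_P$ the gradients of $Q_1,Q_2,Q_3$ are supported on the negative-weight coordinates alone. For the types that actually occur this forces $S$ to be non-normal. When $\dim F_P=2$ (the configuration ``$\Lambda\supseteq F_2$'' attached to $\rho_3$) the surface contains a plane, hence is reducible, and being connected it cannot be normal; when $\dim F_P=1$ (the configuration of $\rho_{12}$, where a direct computation gives $\Delta(S)=\pm\det(A)^2\det(B)$) the three gradients lie in a common two-dimensional space at every point of the line $F_P$, so $S$ is singular in codimension one. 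In either case normality is violated, so $w=0$ cannot occur and the previous paragraph applies. The heart of the proof is thus this boundary analysis rather than the routine weight bookkeeping of the generic case: one must show that the degenerate block structure always produces a positive-dimensional singular locus --- equivalently, that a non-reduced discriminant forces a non-normal surface --- and verify this uniformly against the finite list of numerical types in Theorem~\ref{1-PS}.
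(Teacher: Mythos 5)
Your proof is correct, and it takes a genuinely different route from the one in the paper. The paper works through the thirteen numerical types one at a time: for $\rho_1,\rho_2,\rho_3$, for $\rho_6,\rho_{11},\rho_{12}$, and for part of $\rho_4$ it shows that the configuration forced by Lemma~\ref{1to5} or Lemma~\ref{6to13} makes $S$ non-normal (reducible, or singular along a curve or line), while for $\rho_5,\rho_7,\rho_8,\rho_9,\rho_{10},\rho_{13}$ and the remaining part of $\rho_4$ it observes that $\Delta(S)$ lies in the linear span of a non-stable set of monomials and cites Mumford's criterion. Your single inequality $i\omega_1+j\omega_2+k\omega_3\ge 0$ for every monomial of $\det(xA_1+yA_2+zA_3)$ --- obtained by summing entry weights along a permutation and using that every monomial of $Q_s$ has $\rho$-weight at most $\omega_s$ --- packages all of the paper's ``span of monomials'' arguments into one computation, and moreover absorbs most of the cases the paper treats via non-normality (e.g.\ $\rho_1,\rho_2,\rho_6,\rho_{11}$), since any nonconstant initial weight triple with nonpositive sum immediately produces a destabilizing one-parameter subgroup of $SL(3)$. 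Normality is then needed only for the constant triple $(0,0,0)$, which you correctly identify with the $\Lambda\supseteq F_2$ configuration of $\rho_3$ and with $\rho_{12}$, and your treatment of these (reducibility of a connected degree-$8$ surface containing a plane, resp.\ a line of singular points because the three gradients lie in a two-dimensional space) agrees with the paper's. Two small caveats. First, to quote Lemma~\ref{6to13} you should take $i$ minimal among the destabilizing types (the paper does this implicitly too); alternatively, your block-shape analysis disposes of every $(0,0,0)$ configuration among the thirteen types directly, since $F_P$ is always either a plane or a line with at most two negative-weight coordinates. Second, the closing aside that non-reducedness of the discriminant is ``equivalent'' to non-normality of $S$ is not true in general --- that is exactly why Theorem~\ref{reduce} is a separate statement --- but only the concrete verification in the two listed configurations is actually used, and that part is sound.
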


\begin{proof}
Suppose a net $\Lambda$ of quadrics in $\mathbb{P}^5$ defines a complete intersection normal surface $S$. Note that $S$ is irreducible and reduced.

Now suppose $\Lambda$ is not stable. Then by Theorem~\ref{1-PS}, there exist a basis of $V$ such that $\Lambda$ is not $\rho_i$-stable for some $1\le i\le 13$. Then we will show that $\Delta(S)$ is not stable. The non-stability of $\Lambda$ for $\{\rho\}_{1=1}^{5}$ uses Lemma~\ref{1to5} and for $\{\rho\}_{1=6}^{13}$ uses Lemma~\ref{6to13}.

If $\Lambda$ is not $\rho_i$-stable for $i\in \{1, 2, 3\}$, then $S$ is not normal by Lemma ~\ref{1to5} (1)--(3).

\vskip 1mm
If $\Lambda$ is not $\rho_4$-stable, then either $\Lambda$ contains $F_1$ and an element of $\Lambda$ is singular along $F_1$ or a pencil of $\Lambda$ is singular along $F_1$ with respect to some distinguished flag $F_0\subset F_1\subset F_2\subset F_3\subset F_4\subset \mathbb{P}^5$ by Lemma ~\ref{1to5} (4). If $\Lambda$ contains $F_1$, and an element of $\Lambda$ is singular along $F_1$ then $S$ is not normal. Hence, it suffices to consider the case when a pencil of $\Lambda$ is singular along $F_1$. That is $(Q_2,Q_3)\subset (x_2,x_3,x_4,x_5)^2$, where $\{x_0,\cdots , x_5 \}$ a basis of $V$ which define the flag. Then $\Delta(S)$ is contained in a linear span of $\{x^i y^j z^k| i+j+k=6$ and $i\ge 2\}$. Thus $\Delta(S)$ is not stable (cf. 1.9 in \cite{Mumford}).

\vskip 1mm
If $\Lambda$ is not $\rho_i$-stable for $\{5, 7, 8, 9, 10, 13\}$, then $\Delta(S)$ is not stable by the same argument as the latter case of $\rho_4$.

\vskip 1mm
If $\Lambda$ is not $\rho_6$-stable, then by Lemma~\ref{6to13} (6), $(\omega_{\rho_6}(m_1)$,$\omega_{\rho_6}(m_2)$,$\omega_{\rho_6}(m_3))=(4,-2,-2)$ for some basis $\{x_0,\cdots , x_5 \}$ of $V$. Then the equations of $Q_i$ can be written as
\begin{equation*}
\begin{aligned}
&Q_1: q_1(x_0,\cdots , x_5)=0,\\
&Q_2: x_5 l_2(x_0,\cdots , x_5)+q_2(x_3, x_4)=0,\\
&Q_3: x_5 l_3(x_1,\cdots , x_5)+q_3(x_3, x_4)=0.
\end{aligned}
\end{equation*}
where $q_i$ are quadratic polynomials and $l_i$ are linear polynomials. Then $S$ is singular along $C: x_3=x_4=x_5=Q_1=0$ and so is not normal.

\vskip 1mm
Similarly, if $\Lambda$ is not $\rho_i$-stable for $i\in \{11, 12\}$, then $S$ is singular along $L: x_2=x_3=x_4=x_5=0$ for some basis $\{x_0,\cdots , x_5 \}$ of $V$ and so is not normal.
\end{proof}

\begin{theorem}\label{reduce}
Suppose a net $\Lambda$ of quadrics in $\mathbb{P}^5$ defines a complete intersection surface $S$ with the reduced discriminant $\Delta(S)$. If $\Delta(S)$ is stable then $\Lambda$ is stable.
\end{theorem}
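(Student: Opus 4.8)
The plan is to argue by contraposition, running in parallel with the proof of Theorem~\ref{normal}. Suppose $\Lambda$ is not stable. By Theorem~\ref{1-PS} there is a basis of $V$ in which $\Lambda$ fails to be $\rho_i$-stable for some $1\le i\le 13$, and Lemmas~\ref{1to5} and~\ref{6to13} force the symmetric matrices $A_1,A_2,A_3$ of a normalized basis into a prescribed block form. It then suffices to show that in each case the discriminant $\Delta(S)=\det(xA_1+yA_2+zA_3)$ is either not stable or not reduced, contradicting the hypothesis that it is both. That both alternatives are genuinely needed is the essential new point compared with Theorem~\ref{normal}: some degenerate configurations produce a reduced but unstable sextic, others a (possibly) stable but non-reduced one.

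For the numerical types $i\in\{4,5,7,8,9,10,13\}$ the proof of Theorem~\ref{normal} already exhibits a pencil of the net lying in $(x_2,\dots,x_5)^2$, which throws $\Delta(S)$ into the span of the monomials of $x$-degree $\ge 2$; hence $\Delta(S)$ is divisible by $x^2$, so it is at once not stable and not reduced, and these cases transfer verbatim. The work thus concentrates on $i\in\{1,2,3,6,11,12\}$, where Theorem~\ref{normal} concluded only that $S$ is non-normal. For these I would read off the exact vanishing pattern of $A_1,A_2,A_3$ from the normal form and compute $\Delta(S)$ directly, via three determinantal mechanisms.

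First, if a member of the net has rank $\le 2$ — this is what ``an element of $\Lambda$ contains a hyperplane'' ($\rho_1$) and ``an element is singular along $F_3$'' ($\rho_2$) give — then, placing that quadric at $[1:0:0]$, the variable $x$ multiplies a rank-$2$ matrix, so $\det(xA_1+yA_2+zA_3)$ has $x$-degree $\le 2$; thus $\Delta(S)\in(y,z)^4$ has a point of multiplicity $\ge 4$ and is not stable. Second, if a pencil of the net consists of quadrics of rank $\le 4$ — as for the pencils $\langle Q_2,Q_3\rangle\subset(x_4,x_5)$ of $\rho_2$ and $\langle Q_2,Q_3\rangle\subset(x_3,x_4,x_5)$ of $\rho_6$ — then along the corresponding line the matrix has corank $\ge 2$, so that line is a double component of $\Delta(S)$, which is therefore not reduced. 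Third, if the whole net lies in the ideal of the plane $F_2$, so that every $A_k$ has the form $\left(\begin{smallmatrix}0 & B_k\\ B_k^{T} & C_k\end{smallmatrix}\right)$ with a common $3\times 3$ zero corner, then $\det\left(\begin{smallmatrix}0 & B\\ B^{T} & C\end{smallmatrix}\right)=-(\det B)^2$ yields $\Delta(S)=-(\det B)^2$ with $B=xB_1+yB_2+zB_3$, a perfect square; so $\Delta(S)$ is not reduced.

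The hard part is the locus where $S$ is singular along a curve while no single quadric is degenerate enough for the first shortcut and no pencil is visibly of rank $\le 4$: concretely the sub-case of $\rho_3$ in which a quadric is singular along $F_2$ (rank $3$, not $\le 2$) and the types $\rho_{11},\rho_{12}$. Here I would eliminate the invertible $3\times 3$ corner by a Schur complement and translate the singular-along-a-curve condition into a rank drop of the surviving off-diagonal block, which should produce the repeated factor; when instead the sextic is reduced, one must locate the destabilizing one-parameter subgroup. I expect the genuine subtlety to be precisely that this subgroup is not the one centered at the evident high-multiplicity point — for the singular-along-$F_2$ case $\Delta(S)$ is a reduced union of three conics mutually tangent at two points, the subgroup centered at the triple point fails to destabilize, and it is a subgroup adapted to the line through the two tangency points that succeeds. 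Carrying out these block-determinant computations, verifying the exact multiplicities, and checking that every numerical type lands in one of the two alternatives is where the real labor lies.
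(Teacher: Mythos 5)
Your global strategy coincides with the paper's: argue by contraposition, reduce to the thirteen numerical types via Theorem~\ref{1-PS}, and show case by case that $\Delta(S)$ is either non-reduced or non-stable. Several of your mechanisms are sound, and in places cleaner than the paper's: a rank-$\le 2$ member of the net does force $\Delta(S)\in(y,z)^4$ and hence non-stability; a pencil of corank-$\ge 2$ quadrics does make its line a multiple component of $\Delta(S)$ (the adjugate of a rank-$\le 4$ $6\times 6$ matrix vanishes, so both the constant and linear terms in $x$ die); and the common $3\times 3$ zero corner giving $\Delta(S)=-(\det B)^2$ is exactly the paper's computation for the sub-case of $\rho_3$ in which the net contains $F_2$.

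There is nevertheless a genuine gap: the cases you yourself isolate as ``the hard part'' --- the second sub-case of $\rho_3$, and the types $\rho_{11}$ and $\rho_{12}$ --- are only sketched, and the sketch rests on unverified guesses. The paper disposes of them concretely: for $\rho_{12}$ the initial-weight triple $(0,0,0)$ forces all three matrices into a block shape whose determinant factors as $-q_1^2q_2$ with $q_1,q_2$ conics, so $\Delta(S)$ is non-reduced; for $\rho_3$(b) and $\rho_{11}$ (and likewise $\rho_1$, $\rho_2$, $\rho_4$(a), $\rho_6$) one lists the monomials that can occur in $\Delta(S)$ and checks they all have non-positive weight for a suitable one-parameter subgroup of $SL(3)$, whence $\Delta(S)$ is non-stable by Mumford's criterion. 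Your description of the $\rho_3$(b) discriminant as ``a reduced union of three conics mutually tangent at two points,'' and of the destabilizing subgroup as one ``adapted to the line through the two tangency points,'' is supported by no computation and is not what happens in general; and for $\rho_{11}$ your rank-$\le 4$-pencil mechanism does not apply (the normal form $q(x_2,x_3,x_4)+x_5\ell$ allows rank $5$), so you are left with no argument for that type. Finally, the claim that for $i\in\{4,5,7,8,9,10,13\}$ the proof of Theorem~\ref{normal} ``already exhibits a pencil of the net lying in $(x_2,\dots,x_5)^2$'' is inaccurate: for $\rho_4$ this covers only sub-case (b) of Lemma~\ref{1to5}(4), while sub-case (a) (the net contains $F_1$ and a member is singular along it) requires the separate monomial-span argument, and the normal forms for $\rho_7,\rho_8,\rho_9,\rho_{10},\rho_{13}$ are not literally of that shape. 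The conclusions you want are all true, but as written the proposal leaves several numerical types unproved.
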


\begin{proof}
Let a net $\Lambda$ of quadrics in $\mathbb{P}^5$ define a complete intersection surface $S$ with the reduced discriminant $\Delta(S)$.

Now suppose $\Lambda$ is not stable. Then by Theorem~\ref{1-PS} $\Lambda$ is not $\rho_i$-stable for some $1\le i\le 13$. Then we will show that $\Delta(S)$ is not stable. By the proof of Theorem~\ref{normal}, $\Delta(S)$ is not stable if $\Lambda$ is not $\rho_i$-stable for some $i\in\{5,7,8,9,10,13\}$ or a pencil of $\Lambda$ is singular along $F_1$ with respect to some distinguished flag $F_0\subset F_1\subset F_2\subset F_3\subset F_4\subset \mathbb{P}^5$, and so $\Lambda$ is not $\rho_4$-stable. Therefore we consider remaining cases.

\vskip 1mm
If $\Lambda$ is not $\rho_{12}=(1,1,0,0,-1,-1)$-stable, then by Lemma~\ref{6to13} (12), $(\omega_{\rho_{12}}(m_1)$,$\omega_{\rho_{12}}(m_2)$,$\omega_{\rho_{12}}(m_3))$$=(0,0,0)$ for some basis $\{x_0,\cdots , x_5 \}$ of $V$. Then the matrix $A_i$ with respect to each $Q_i$ has the form:
\begin{SMALL}\begin{equation*}
A_1=\begin{bmatrix}
0 & 0 & 0 & 0 & a_{04} & a_{05} \\
0 & 0 & 0 & 0 & a_{14} & a_{15} \\
0 & 0 & 2a_{22} & a_{23} & a_{24} & a_{25} \\
0 & 0 & a_{23} & 2a_{33} & a_{34} & a_{35} \\
a_{04} & a_{14} & a_{23} & a_{33} & 2a_{34} & a_{35} \\
a_{05} & a_{15} & a_{23} & a_{33} & a_{34} & 2a_{35}
\end{bmatrix}
, A_2=\begin{bmatrix}
0 & 0 & 0 & 0 & b_{04} & b_{05} \\
0 & 0 & 0 & 0 & b_{14} & b_{15} \\
0 & 0 & 2b_{22} & b_{23} & b_{24} & b_{25} \\
0 & 0 & b_{23} & 2b_{33} & b_{34} & b_{35} \\
b_{04} & b_{14} & b_{23} & b_{33} & 2b_{34} & b_{35} \\
b_{05} & b_{15} & b_{23} & b_{33} & b_{34} & 2b_{35}
\end{bmatrix}
,
\end{equation*}\end{SMALL}
\begin{SMALL}\begin{equation*}
A_3=\begin{bmatrix}
0 & 0 & 0 & 0 & c_{04} & c_{05} \\
0 & 0 & 0 & 0 & c_{14} & c_{15} \\
0 & 0 & 2c_{22} & c_{23} & c_{24} & c_{25} \\
0 & 0 & c_{23} & 2c_{33} & c_{34} & c_{35} \\
c_{04} & c_{14} & c_{23} & c_{33} & 2c_{34} & c_{35} \\
c_{05} & c_{15} & c_{23} & c_{33} & c_{34} & 2c_{35}
\end{bmatrix}
\end{equation*}\end{SMALL}
and so

\begin{equation*}
\begin{aligned}
&\Delta(S)=det(xA_1+yA_2+zA_3)\\
&=-q_1^2q_2
\end{aligned}
\end{equation*}
where $q_1, q_2$ are quadratic polynomials in $x, y, z$. Thus $\Delta(S)$ is not reduced.

\vskip 1mm
If $\Lambda$ is not $\rho_3$-stable, then either $\Lambda$ contains $F_2$ or a pencil of $\Lambda$ contains $F_2$, and an element of the pencil is singular along $F_2$ with respect to some distinguished flag $F_0\subset F_1\subset F_2\subset F_3\subset F_4\subset \mathbb{P}^5$ by Lemma ~\ref{1to5} (3). First consider the case when $\Lambda$ contains $F_2$. That is $(Q_1,Q_2,Q_3)\subset (x_3,x_4,x_5)$, where $\{x_0,\cdots , x_5 \}$ a basis of $V$ which define the flag. Then the matrix $A_i$ with respect to each $Q_i$ has the form:
\begin{SMALL}\begin{equation*}
A_1=\begin{bmatrix}
0 & 0 & 0 & a_{03} & a_{04} & a_{05} \\
0 & 0 & 0 & a_{13} & a_{14} & a_{15} \\
0 & 0 & 0 & a_{23} & a_{24} & a_{25} \\
a_{03} & a_{13} & a_{23} & 2a_{33} & a_{34} & a_{35} \\
a_{04} & a_{14} & a_{23} & a_{33} & 2a_{34} & a_{35} \\
a_{05} & a_{15} & a_{23} & a_{33} & a_{34} & 2a_{35}
\end{bmatrix}
, A_2=\begin{bmatrix}
0 & 0 & 0 & 0 & b_{04} & b_{05} \\
0 & 0 & 0 & b_{13} & b_{14} & b_{15} \\
0 & 0 & 0 & b_{23} & b_{24} & b_{25} \\
0 & b_{13} & b_{23} & 2b_{33} & b_{34} & b_{35} \\
b_{04} & b_{14} & b_{23} & b_{33} & 2b_{34} & b_{35} \\
b_{05} & b_{15} & b_{23} & b_{33} & b_{34} & 2b_{35}
\end{bmatrix}
,
\end{equation*}\end{SMALL}
\begin{SMALL}\begin{equation*}
A_3=\begin{bmatrix}
0 & 0 & 0 & 0 & 0 & c_{05} \\
0 & 0 & 0 & c_{13} & c_{14} & c_{15} \\
0 & 0 & 0 & c_{23} & c_{24} & c_{25} \\
0 & c_{13} & c_{23} & 2c_{33} & c_{34} & c_{35} \\
0 & c_{14} & c_{23} & c_{33} & 2c_{34} & c_{35} \\
c_{05} & c_{15} & c_{23} & c_{33} & c_{34} & 2c_{35}
\end{bmatrix}
\end{equation*}\end{SMALL}
and so

\begin{equation*}
\begin{aligned}
\Delta(S)&=det(xA_1+yA_2+zA_3)\\
&=det \left( \begin{bmatrix}
O & A \\
A^T & B \\
\end{bmatrix} \right)\\
&=-(det(A))^2
\end{aligned}
\end{equation*}
where
\begin{Small}\begin{equation*}
O=\begin{bmatrix}
0 & 0 & 0 \\
0 & 0 & 0 \\
0 & 0 & 0
\end{bmatrix},
A=\begin{bmatrix}
a_{03}x & a_{04}x+b_{04}y & a_{05}x+b_{05}y+c_{05}z \\
a_{13}x+b_{13}y+c_{13}z & a_{14}x+b_{14}y+c_{14}z & a_{15}x+b_{15}y+c_{15}z \\
a_{23}x+b_{23}y+c_{23}z & a_{24}x+b_{24}y+c_{24}z & a_{25}x+b_{25}y+c_{25}z \\
\end{bmatrix}.
\end{equation*}\end{Small}
Thus $\Delta(S)$ is not reduced.

\vskip 1mm
Now consider the case when a pencil of $\Lambda$ contains $F_2$, and an element of the pencil is singular along $F_2$. That is $(Q_2,Q_3)\subset (x_3,x_4,x_5)$ and $Q_3\in (x_3,x_4,x_5)^2$, where $\{x_0,\cdots , x_5 \}$ a basis of $V$ which define the flag. Then $\Delta(S)$ is contained in a linear span of $x^6$, $x^5y$, $x^5z$, $x^4y^2$, $x^4yz$, $x^4z^2$, $x^3y^3$, $x^3y^2z$, $x^3yz^2$, $x^3z^3$, $x^2y^4$, $x^2y^3z$, $x^2y^2z^2$, $xy^5$, $xy^4z$, $y^6$. Thus $\Delta(S)$ is not stable (cf. 1.9 in \cite{Mumford}).

\vskip 1mm
Finally, if $\Lambda$ is not $\rho_i$-stable for $i\in \{1, 2, 4, 6, 11\}$, then $\Delta(S)$ is not stable by the same argument as the latter case of $\rho_3$.
\end{proof}

Let $S$ be a K3 surface of degree $8$ in $\mathbb{P}^5$, given by the complete intersection of three quadrics $Q_1, Q_2, Q_3$. Associated to $S$ is a K3 surface $S'$ which is a double cover of $\mathbb{P}^2$ ramified over a sextic. Let $\Lambda$ be the net of quadrics spanned by $Q_0, Q_1, Q_2$, and $\psi: X \longrightarrow \mathbb{P}^2$ the double cover of $\mathbb{P}^2$ branched along $\Delta(\Lambda)$. Then $S'$ is also a K3 surface. And there is a dominant rational map $\phi: \mathcal{N}^{ss}//SL(6)\dashrightarrow \mathcal{C}_6^{ss}//SL(3)$ where $\mathcal{N}$ the space of nets of quadrics in $\mathbb{P}^5$ and $\mathcal{C}_6$ the space of plane sextic curves. The degree of this map is equal to the number of non-effective theta characteristics on a general sextic curve which is $2^9(2^{10}+1)$ (cf. \cite{Do} and \cite{Sh}). Our original motivation in this paper is to compare the moduli space of algebraic K3 surfaces with degree $8$ with the moduli space of K3 surfaces as a double cover of $\mathbb{P}^2$, ramified over a sextic curve. It is well known that if $S$ is nonsingular then $\Delta(S)$ is stable \cite{Be2}.

\begin{theorem}\label{simple}
Suppose a net $\Lambda$ of quadrics in $\mathbb{P}^5$ defines a complete intersection surface $S$. If $S$ has simple singularities, then $\Delta(S)$ has simple singularities.
\end{theorem}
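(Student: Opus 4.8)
The plan is to reduce the statement to a local analytic computation at each singular point and to match the local type of $\Delta(S)$ against that of $S$. First I would record the correspondence between the two singular loci. A point $[\lambda]\in\mathbb{P}^2$ lies on $\Delta(S)$ exactly when the quadric $Q_\lambda=xQ_1+yQ_2+zQ_3$ is singular, with vertex $V_\lambda=\mathbb{P}(\ker A_\lambda)$, where $A_\lambda$ is the symmetric matrix of $Q_\lambda$. If $p\in V_\lambda\cap S$ then the gradient of $Q_\lambda$ vanishes at $p$, so $dQ_1,dQ_2,dQ_3$ are dependent at $p$ and $S$ is singular there; conversely every singular point of $S$ arises this way. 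Thus the singular points of $\Delta(S)$ whose vertex meets $S$ correspond to the singular points of $S$, and one must also check that $\Delta(S)$ is smooth at each $[\lambda]$ whose vertex avoids $S$.

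Fix a singular point $p$ of $S$. After a change of basis of $V$ I put $p=[1:0:\cdots:0]$, and after a change of basis of the net I arrange that $Q_3$ is singular at $p$, so that the distinguished point of $\Delta(S)$ is $[0:0:1]$ and $A_3=\begin{pmatrix}0&0\\0&\overline{A}_3\end{pmatrix}$. Since the net is stable (simple singularities force stability by the cited results), no pencil of the net is singular at $p$, so in the chart $x_0=1$ the linear parts $\ell_1,\ell_2$ of $Q_1,Q_2$ at $p$ are independent. Working in the chart $z=1$ and writing $M(x,y)=xA_1+yA_2+A_3$, the decisive observation is that the $(0,0)$-entry of $M$ equals $x\,Q_1(p)+y\,Q_2(p)$, which vanishes identically because $p\in S$ (for a vertex off $S$ this entry is nonzero, forcing smoothness of $\Delta(S)$). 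When $\overline{A}_3$ is invertible (corank one) the Schur complement gives $\Delta(S)=(\text{unit})\cdot N(x,y)$ with $N=-\beta^{T}D^{-1}\beta$ and $\beta(x,y)=x\,a+y\,b$, where $a,b$ are the gradient vectors $\ell_1,\ell_2$; in general the local equation is a unit times the determinant of a symmetric matrix of size equal to the corank, with entries vanishing at the origin. This is the reduction that makes the problem finite.

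The heart of the argument is a tangent-cone comparison. Eliminating two variables by means of $Q_3$, the surface singularity of $S$ at $p$ has tangent cone $q_3|_H$, the restriction of the quadratic part $q_3$ to the tangent $3$-space $H=\{a^{T}y=b^{T}y=0\}$, while the quadratic part of $\Delta(S)$ is the binary form $\Phi(x,y)=-\beta^{T}\overline{A}_3^{-1}\beta$ with Gram matrix $G=\begin{pmatrix}\langle a,a\rangle & \langle a,b\rangle\\ \langle a,b\rangle & \langle b,b\rangle\end{pmatrix}$, where $\langle u,v\rangle=u^{T}\overline{A}_3^{-1}v$. A short duality computation identifying the radical of $q_3|_H$ with $\ker G$ yields $\rank(q_3|_H)=1+\rank G$. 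Hence a node on $S$ ($\rank=3$) forces $\Phi$ nondegenerate, i.e. an ordinary node $A_1$ on $\Delta(S)$; a rank-two surface tangent cone forces $\Phi$ to be a double line, so $\Delta(S)$ has an $A_{\ge2}$ point; and a rank-one surface tangent cone (the $D$ and $E$ cases) forces $\Phi\equiv0$, so $\Delta(S)$ has multiplicity $\ge3$. This already matches the coarse invariants on the two sides.

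To finish I would run the case analysis over the possible coranks, which are bounded since isolated singularities force $\dim(S\cap V_\lambda)=0$ and hence corank at most $3$, and over the $ADE$ types, expanding $N$ to the order needed to read off the plane-curve singularity. The main obstacle is exactly this higher-order bookkeeping: in the double-line case one must show $\Delta(S)$ is an $A_m$ with finite $m$, and in the multiplicity-three case that it is one of $D_k,E_6,E_7,E_8$ rather than a non-simple unimodal singularity such as $x^3+y^6$; here the normal forms of the simple surface singularities enter and the ``complicated computations'' live. One must also treat corank-two and corank-three vertices with care, since several singular points of $S$ may collapse to a single point of $\Delta(S)$ (the induced map on singular loci need not be injective), and confirm that a corank-two vertex disjoint from $S$ still yields only a node. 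Finally, because $S$ has only isolated singularities the discriminant cannot vanish identically (by the earlier Proposition its vanishing would force a non-isolated singular locus), so $\Delta(S)$ is a genuine reduced sextic and the local analysis suffices.
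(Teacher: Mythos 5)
Your setup is sound and is essentially the paper's (and Wall's): the bijection between singular points of $S$ and the relevant points of $\Delta(S)$ via vertices of singular quadrics, the Schur complement reduction $\Delta=(\mathrm{unit})\cdot\det(A-U)$ with $U=B^TC^{-1}B$, the observation that the local type of $S$ at $p$ matches that of $y^T(A-U)y$, the corank bound (corank $\le 3$ since a larger vertex meets the base locus in positive dimension), and the tangent-cone comparison (the tangent cone of $\Delta$ is $\det A$, i.e.\ the discriminant of the net restricted to the vertex). One caveat even at this stage: the bijectivity of $S(V)\to S(B)$ requires that every singular point of $S$ be \emph{tame} (lie on the vertex of only one quadric of the net), which in the paper is deduced from $\rho_5$-stability; your proposal asserts the correspondence without addressing points lying on two vertices.

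The genuine gap is that the proof stops exactly where the theorem's content begins. Your rank formula $\rank(q_3|_H)=1+\rank G$ only controls the $2$-jet of $\Delta$, so it separates $A_1$ from $A_{\ge2}$ from multiplicity $\ge3$, but it cannot decide whether a multiplicity-$3$ point of $\Delta$ is $D_k$, $E_6$, $E_7$, $E_8$, or non-simple (e.g.\ of type $x^3+y^6$), nor whether an $A_{\ge2}$ point has finite $m$. The required implication is: \emph{whenever the higher-order coefficients of $\det(A-U)$ conspire to make $\Delta$ non-simple, the surface singularity is itself non-simple} --- and this is not bookkeeping that follows from the setup. In the paper's corank-$3$ analysis with a triple-point tangent cone, one must expand $U$ to order $3$, identify explicit coefficient conditions (e.g.\ $k=-\alpha t$ and $-2\alpha nk+k^2-\alpha^2u=0$) under which $\Delta$ fails to be $E_6/E_7/E_8$, and then, \emph{on the surface side}, perform a formal splitting $\varphi(F)=x_0^2+x_1^2+G(\mu,\nu)$ and track the $3$-jet of $G$ through the iterated coordinate changes to show it vanishes under exactly those same conditions, so that $V$ has a non-simple corank-$2$ singularity. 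The analogous verification is needed in the corank-$2$ case where the net cuts $\lambda x^2$ on the vertex line. None of this is supplied or reduced to a cited result in your proposal; "expanding $N$ to the order needed" names the task but does not perform the matching of degeneration conditions on the two sides, which is the heart of the proof.
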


We begin with some preparation for the proof of Theorem~\ref{simple}. We refer \cite{W2} for it. And we also refer \cite{GLS} for the singularity theory.

We consider a net $\Lambda$ of quadrics given by the vanishing of
\[ F(\lambda,x)\equiv \sum_{i=1}^3 \sum_{j,k=0}^5 a_{jk}^i \lambda_i x_j x_k=x^T(\sum_{i=1}^3 \lambda_i A_i)x, \]
where the matrices $A_i=(a_{jk}^i)$ are symmetric. Define the \emph{total variety}
\[ V=\{(\lambda,x):F(\lambda,x)=0 \}, \]
the \emph{variety of base points}
\[ B=\{x:F(\lambda,x)=0 \text{ for all } \lambda \}, \]
and the \emph{discriminant}
\[ \Delta:=\Delta(\Lambda)=\{\lambda:det(\sum_{i=1}^3 \lambda_i A_i)=0 \}. \]

Write also for any $\lambda$, $Q_{\lambda}$ for the quadric $\{x: F(\lambda,x)=0\}$, and $Q_i$ for the quadric $x^T A_i x=0$.

For any variety $X$, we write $S(X)$ for the variety of its singular points: we first consider this as a point set. Thus $S(Q_{\lambda})$ is the vertex of the quadric $Q_{\lambda}$.

\begin{lemma}{\rm (\cite{W2},Lemma 1.1.)}
$(\lambda,x)\in S(V)$ if and only if $x\in S(Q_{\lambda})\cap B.$
\end{lemma}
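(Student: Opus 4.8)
The plan is to realize $V$ as a bihomogeneous hypersurface in $\mathbb{P}^2\times\mathbb{P}^5$ and to read off its singular locus from the Jacobian criterion. Setting $M_\lambda=\sum_{i=1}^3\lambda_iA_i$, the total variety is the zero locus of $F(\lambda,x)=x^TM_\lambda x$, a polynomial of bidegree $(1,2)$ in $(\lambda,x)$. Since $\mathbb{P}^2\times\mathbb{P}^5$ is smooth and $V$ is a hypersurface in it, a point $(\lambda,x)\in V$ is singular precisely when the differential of $F$ vanishes there; the first thing I would establish is that, thanks to the two Euler relations $\sum_i\lambda_i\,\partial F/\partial\lambda_i=F$ and $\sum_j x_j\,\partial F/\partial x_j=2F$, this is equivalent to the simultaneous vanishing of all the bihomogeneous partials $\partial F/\partial\lambda_i$ $(i=1,2,3)$ and $\partial F/\partial x_j$ $(j=0,\dots,5)$.

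Next I would compute these two families of partials explicitly. Differentiating $F=x^TM_\lambda x$ in the $\lambda$-directions gives $\partial F/\partial\lambda_i=x^TA_ix$, while differentiating in the $x$-directions gives $\partial F/\partial x_j=2(M_\lambda x)_j$. Thus the singular-point conditions split into two independent blocks, one involving only the net (the forms $x^TA_ix$) and one involving only the chosen member $Q_\lambda$ (the vector $M_\lambda x$).

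Finally I would translate each block geometrically. The vanishing of $x^TA_ix$ for all $i$ says exactly that $F(\lambda',x)=\sum_i\lambda_i'\,x^TA_ix=0$ for every $\lambda'$, i.e. that $x\in B$. The vanishing of $M_\lambda x$ says exactly that $x$ lies in the kernel of the symmetric matrix $M_\lambda$, i.e. that $x\in S(Q_\lambda)$, the vertex of the quadric $Q_\lambda$. Combining the two blocks yields $(\lambda,x)\in S(V)$ if and only if $x\in S(Q_\lambda)\cap B$, as claimed. Consistency is automatic: if $x\in S(Q_\lambda)\cap B$ then $x^TM_\lambda x=0$, so $(\lambda,x)$ does lie on $V$.

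I expect the only genuine subtlety — hence the main point to get right — to be the first step: justifying that singularity of the bihomogeneous hypersurface $V$ is detected by the vanishing of all the listed partials, since the partials with respect to the two chart-normalizing variables are not affine derivatives and must be recovered from the Euler relations together with the equation $F=0$ holding on $V$. Once this multiprojective Jacobian criterion is in place, the two remaining identifications are immediate.
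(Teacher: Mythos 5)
Your argument is correct and is exactly the standard (and Wall's original) proof: the paper itself states this lemma with only a citation to \cite{W2}, where it is proved precisely by computing $\partial F/\partial\lambda_i=x^TA_ix$ and $\partial F/\partial x=2M_\lambda x$ and using the Euler relations to reduce the multiprojective Jacobian criterion to these partials. Nothing to add.
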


\begin{lemma}{\rm (\cite{W2},Lemma 1.2.)}
$x\in S(B)$ if and only if there exists $\lambda \in \Delta$ with $x\in S(Q_{\lambda}).$
\end{lemma}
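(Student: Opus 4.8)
The plan is to study $\Delta=\Delta(\Lambda)$ near each of its singular points and to show that the simplicity of the singularities of $S=B$ forces every such point to be an ADE singularity. Fix $\lambda_0\in\Delta$ with $Q_{\lambda_0}$ singular of corank $k$, and choose coordinates on $\mathbb{P}^5$ so that the vertex $S(Q_{\lambda_0})$ is the span of $x_0,\dots,x_{k-1}$. Writing $M(\lambda)=\sum_i\lambda_iA_i$ in the corresponding block form, with upper-left $k\times k$ block $P$, off-diagonal block $R$, and lower-right block $M'$ invertible near $\lambda_0$, the Schur complement presents $\Delta$ locally as $\{\det\widetilde P(\lambda)=0\}$ with $\widetilde P=P-R\,M'^{-1}R^{T}$ a symmetric $k\times k$ matrix vanishing at $\lambda_0$. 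The problem thus becomes local: to identify this symmetric determinantal germ and to match it against the germ of $S$ at the base points lying in the vertex $S(Q_{\lambda_0})$.

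First I would bound the corank. If some $Q_{\lambda_0}$ had corank $k\geq 4$, its vertex would be a linear space of dimension $\geq 3$; restricting the net there (where $Q_{\lambda_0}$ becomes $0$) gives a pencil of quadrics on a $\mathbb{P}^{\geq 3}$ with positive-dimensional base locus, and every base point lies in $S(Q_{\lambda_0})\cap B$, hence in $S(B)$ by Lemma~1.2 of \cite{W2}. This would make $S(B)=\operatorname{Sing}(S)$ positive-dimensional, contradicting that simple singularities are isolated; so no quadric has corank $\geq 4$. A variant of the argument controls reducedness: were the locus of corank-$\geq2$ quadrics positive-dimensional, the sweep of their vertices would give $S$ a cone over a positive-genus curve, i.e.\ a non-rational and hence non-simple surface singularity. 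So that locus is finite, $\det M(\lambda)$ is square-free, and $\Delta$ is a reduced sextic with finitely many singular points, each carried by a quadric of corank $\leq 3$.

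The core is a local correspondence set up through the Schur complement and an elementary duality. In the corank-$1$ case the vertex is a single point $v$, the block $P=\sum_i\lambda_i(A_i)_{00}$ vanishes identically precisely because $v\in B$ (so $(A_i)_{00}=v^{T}A_iv=0$), and $\Delta$ is locally $\{q=0\}$ with $q=-\ell\,M'^{-1}\ell^{T}$, where $\ell(\lambda)$ is the off-diagonal block of linear forms; the adjugate identity $\partial_i\det M(\lambda_0)=c\,v^{T}A_iv$ (with $c\neq0$) confirms that $\lambda_0$ is singular on $\Delta$ exactly when $v\in B$, i.e.\ exactly over a point of $S(B)$. Meanwhile $S$ near $v$ reduces, after using the quadrics with independent linear parts to eliminate variables, to the hypersurface germ $g=Q_{\lambda_0}|_{\{Q_1=Q_2=0\}}$. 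The key lemma is that the Hessians of $q$ and of $g$ share the same corank $\dim(U\cap U^{\perp})$, where $U$ is spanned by the vectors $M'(\lambda_0)^{-1}\ell_i^{T}$ and $\perp$ denotes orthogonality for $M'(\lambda_0)$; this common corank separates the $A$-regime (corank $\leq1$) from the $D,E$-regime (corank $2$), so the ADE type of $\Delta$ at $\lambda_0$ is forced to mirror that of $S$ at $v$.

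The hard part, which I expect to be the main obstacle, is to upgrade this equality of tangent cones to a statement at the level of ADE type --- or at least of simplicity --- uniformly over coranks $1,2,3$ and over the degenerate positions of the $\ell_i$ (where $v$ is the vertex of an entire pencil and several base points collide into one point of $\Delta$). Concretely, at a corank-$3$ point one must show that the simplicity of $S$ at the finitely many base points in the vertex $\mathbb{P}^2$ forces the symmetric $3\times3$ determinant $\{\det\widetilde P=0\}$ to be of type $D_n$ or $E_6,E_7,E_8$ and never a non-simple triple point such as $x^3+y^6$. This is precisely what Wall's local normal forms for nets of quadrics \cite{W2} supply; combined with the ADE classification of \cite{GLS} they give that every singular point of $\Delta$ is simple, which proves the theorem.
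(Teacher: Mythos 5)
Your proposal does not address the statement at hand. The lemma in question is Wall's Lemma~1.2, a pointwise linear-algebra fact characterizing the singular points of the base locus $B$: a point $x\in B$ is singular on $B$ exactly when it is a vertex point of some singular quadric of the net. The paper offers no proof at all --- it simply quotes \cite{W2} --- and the content is the Jacobian criterion: for $x\in B$, $x\in S(B)$ iff the differentials of the $Q_i$ at $x$, i.e.\ the three linear forms $y\mapsto x^{T}A_iy$, are linearly dependent, iff there exists $\lambda\neq 0$ with $\bigl(\sum_i\lambda_iA_i\bigr)x=0$; this says precisely $x\in S(Q_\lambda)$, and such a $\lambda$ automatically lies in $\Delta$ because $\sum_i\lambda_iA_i$ then has nontrivial kernel, hence vanishing determinant. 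Conversely, if $x\in B$ lies in $S(Q_\lambda)$ for some $\lambda\in\Delta$, the same dependence of differentials shows $x\in S(B)$. Nothing about coranks, Schur complements, normal forms, or the ADE classification enters.

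What you have written is instead an outline of the proof of Theorem~\ref{simple} (simple singularities of $S$ force simple singularities of $\Delta(S)$), a much later and much harder result in the paper. Worse, the outline is circular relative to the task: in your corank-bounding step you explicitly invoke ``Lemma~1.2 of \cite{W2}'' --- the very statement you were asked to prove --- to place base points in the vertex of a high-corank quadric inside $S(B)$. So read as a proof of the lemma, your argument assumes its conclusion; read as what it actually is, it proves a different theorem. The fix is to discard the ADE machinery entirely and give the short Jacobian-rank argument above, taking care that the equivalence is quantified over points $x$ of $B$ (a vertex of a singular quadric need not lie in the base locus, so the ``if'' direction is only meaningful for $x\in B$).
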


Combining these lemmas, we see that the image of $S(V)$ under the projection on $\mathbb{P}^5$ defined by $x$ is precisely $S(B)$. This projection often gives a bijection $S(V)\to S(B)$. Indeed given $x\in B$, we again consider the tangent planes $x^T A_i y=0$. In general these are independent, spanning a $3$-dimensional vector space. We have $x\in S(B)$ when they are dependent: call $x$ \emph{tame} if they span a $2$-dimensional space. In this case there is a unique linear relation (up to scalar multiples), hence a unique $\lambda \in \mathbb{P}^2$ with $x\in S(Q_{\lambda})$ and so $(\lambda,x)\in S(V)$. Therefore $S(V)\to S(B)$ is bijective if all points in $S(B)$ are tame.

\begin{lemma}
For a stable net $\Lambda$, all points of $S(B)$ are tame.
\end{lemma}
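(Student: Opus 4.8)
The plan is to reinterpret tameness as a statement in linear algebra about the rows of the matrices $A_i$, and then to show that a \emph{non}-tame point of $S(B)$ forces the net into the degenerate configuration of Lemma~\ref{1to5}(5), so that $\Lambda$ would fail to be $\rho_5$-stable, contradicting the hypothesis that $\Lambda$ is stable.

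First I would unwind the definitions. A base point $x\in B$ lies in $S(B)$ precisely when the three covectors $x^T A_1, x^T A_2, x^T A_3$ are linearly dependent, and it is tame exactly when they span a $2$-dimensional space. Thus a non-tame point of $S(B)$ is a base point $x$ at which these three covectors span a space of dimension $\le 1$, and it is exactly such points that must be excluded. I would normalize coordinates so that $x=F_0=[1:0:\cdots:0]$. Since $x\in B$, evaluating $F(\lambda,x)$ shows $a_{00}^i=0$ for $i=1,2,3$, so each covector $x^T A_i=\sum_{j\ge 1}a_{0j}^i\,x_j$ lies in $\langle x_1,\dots,x_5\rangle$. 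I then split into two cases according to the dimension of their span.

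If the span is $0$, every zeroth row of each $A_i$ vanishes, hence by symmetry so does every zeroth column; therefore all three quadrics are singular at $F_0$ while $F_0$ remains a base point, which is the configuration of Lemma~\ref{1to5}(5), so $\Lambda$ is $\rho_5$-unstable. If instead the span is a line $\langle\ell\rangle$ with $\ell\in\langle x_1,\dots,x_5\rangle$, I would apply a coordinate change fixing $F_0$ and acting on $x_1,\dots,x_5$ to arrange $\ell=x_5$, so that $a_{0j}^i=0$ for $j\le 4$ and $a_{05}^i=c_i$ with $(c_1,c_2,c_3)\neq 0$. Choosing a new basis of the net (a $\GL(3)$-change, which does not alter $\Lambda$ as a point of the Grassmannian) I can isolate the unique $x_0x_5$ term in a single quadric $Q_1$, so that $Q_2,Q_3$ involve no $x_0$ and are therefore singular at $F_0$, while $F_0$ remains a base point. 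Again this is precisely the configuration of Lemma~\ref{1to5}(5), so $\Lambda$ is $\rho_5$-unstable. Since stability is invariant under projective equivalence, in either case $\Lambda$ is not stable, a contradiction; hence every point of $S(B)$ is tame.

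The main obstacle, and really the only delicate point, is the bookkeeping in the dimension-one case: I must check that the coordinate normalization can be carried out while keeping $F_0$ both fixed and a base point, and that after isolating the $x_0x_5$ term the resulting data matches condition (5)(a) of Lemma~\ref{1to5} exactly, namely that the \emph{full} pencil $\langle Q_2,Q_3\rangle$ is singular at $F_0$ together with $F_0\in B$, rather than some weaker degeneration that would not by itself yield $\rho_5$-instability.
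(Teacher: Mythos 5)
Your proposal is correct and follows essentially the same route as the paper: both arguments show that a non-tame point of $S(B)$ is a base point at which a whole pencil of quadrics of $\Lambda$ is singular, which is exactly configuration (5)(a) of Lemma~\ref{1to5} and hence contradicts $\rho_5$-stability. The paper phrases this more briefly (two distinct $\lambda$'s with $x$ in the vertex of $Q_\lambda$ span such a pencil), while you carry out the same reduction explicitly in coordinates; the extra bookkeeping is sound but not a different method.
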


\begin{proof}
Suppose that $x\in S(B)$ is not tame. Then there are $\lambda_1\neq \lambda_2$ such that $x$ is on the vertex of $Q_{\lambda_1}, Q_{\lambda_2}$and therefore $\Lambda$ is not $\rho_5$-stable by Lemma ~\ref{1to5} (5).
\end{proof}

Say that two isolated hypersurface singularities are \emph{the same type} if either they are analytically equivalent or they can be reduced by analytic equivalence to hypersurfaces defined by two functions
\[ f(z_1,\cdots ,z_k)=0 \text{ in } \mathbb{C}^k \]
\[ f(z_1,\cdots ,z_k)+\sum_{i=1}^t z_{k+i}^2=0 \text{ in } \mathbb{C}^{k+t}. \]

Here of course, $\sum_{i=1}^t z_{k+i}^2$ could be replaced by any nonsingular quadratic form.

\begin{prop}{\rm (\cite{W2},Proposition 1.3.)}
Let $(\lambda,x)\in S(V)$ where $x$ is tame in $S(B)$. Then the two singular points have the same type.
\end{prop}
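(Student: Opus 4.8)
The plan is to identify the two singularities in the statement — the hypersurface singularity of the base surface $B$ at $x$ and the plane-curve singularity of the discriminant $\Delta$ at $\lambda$ — as two different \emph{suspensions} (stabilizations by nondegenerate quadratic forms) of one and the same singularity, namely that of the total variety $V=\{F=0\}$ at the point $(\lambda,x)\in S(V)$. Since ``same type'' was defined just above as equivalence after adding a nondegenerate quadratic form in auxiliary variables, exhibiting $V$ as a common suspension of both singularities finishes the proof. The entire argument is local and analytic near $(\lambda,x)$, and the only tools are the splitting lemma (parametrized Morse lemma, see \cite{GLS}) together with the geometric meaning of tameness.

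First I would fix coordinates. Put $x=[1:0:\cdots:0]=e_0$ and let $\lambda=[0:0:1]$ be the unique parameter with $A_\lambda x=0$ furnished by tameness. Since $x\in B$ we have $(A_i)_{00}=0$ for all $i$, so in the affine chart $x_0=1$, $\lambda=(s,t,1)$ the defining equation of $V$ becomes
\[ G(s,t,\tilde x)=F=s\,Q_1+t\,Q_2+Q_3=2(s x_1+t x_2)+\tilde x^{\mathsf T} M_\lambda\,\tilde x, \]
where $\tilde x=(x_1,\dots,x_5)$ and $M_\lambda$ is the lower $5\times 5$ block of $A_\lambda$. Here I have used tameness to normalize the two independent tangent vectors $A_1x,A_2x$ to $e_1,e_2$ (so the linear parts of $Q_1,Q_2$ are $2x_1,2x_2$) and the relation $A_3x=0$ (so $Q_3=\tilde x^{\mathsf T}M\,\tilde x$ with $M=M_\lambda|_{s=t=0}$).

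From this single normal form I would run the splitting lemma in two complementary ways. Completing the square in the fibre variables $\tilde x$, which is legitimate because the block $M$ is nondegenerate, gives $G\sim \sum_{i=1}^{5} y_i^2-D_0(s,t)$ with $D_0(s,t)=b^{\mathsf T}M_\lambda^{-1}b$ and $b=(s,t,0,0,0)$; the Schur-complement identity $\det A_\lambda=-\det(M_\lambda)\,D_0$ shows that $D_0=0$ is the local equation of $\Delta$ up to a unit, so the singularity of $V$ at $(\lambda,x)$ is the suspension by five squares of that of $\Delta$ at $\lambda$. Dually, since $dQ_1,dQ_2$ are independent at $x$ (again tameness), I would use $u_1=Q_1,\,u_2=Q_2,\,x_3,x_4,x_5$ as local coordinates, so that $G=su_1+tu_2+Q_3$, whose quadratic part contains the nondegenerate hyperbolic pairing $su_1+tu_2$. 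Splitting this off eliminates $s,t,u_1,u_2$ and leaves the residual $Q_3|_{\{u_1=u_2=0\}}=Q_3|_{\{Q_1=Q_2=0\}}$, which is exactly the local hypersurface equation of $B$ inside the smooth threefold $\{Q_1=Q_2=0\}\cong \mathbb{C}^3$. Hence the singularity of $V$ is equally the suspension by a rank-four form of that of $B$ at $x$. As both named singularities are suspensions of the same $V$-singularity, they are analytically equivalent after adding suitable sums of squares, i.e.\ they have the same type.

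The step that carries the real weight is the square-completion in $\tilde x$: it requires that $M$ be nondegenerate, equivalently that the quadric $Q_\lambda$ be of corank exactly one, for only then does the elimination produce the \emph{plane curve} $\Delta$ as a genuine suspension rather than something with extra vanishing in the kernel directions of $M$. Securing this corank-one fact from the tameness hypothesis (and, in the intended application, from the stability of $\Lambda$ together with the isolatedness of the singularities of $S$) is the delicate point; once it is in place, the parametrized Morse lemma and the bookkeeping of the two suspensions — matching the five squares on the $\Delta$-side against the rank-four hyperbolic form on the $B$-side — are entirely routine.
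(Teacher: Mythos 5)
Your proposal misreads which two singular points the proposition compares. ``The two singular points'' are the singular point $(\lambda,x)$ of the total variety $V$ and the singular point $x$ of the base locus $B$ --- not, as you assume, $x\in S(B)$ and $\lambda\in\Delta$. This is how Wall states Proposition 1.3 in \cite{W2} (which the paper cites without reproducing a proof), and it is the only reading consistent with how the paper uses the proposition: immediately afterwards the paper says that to prove Theorem~\ref{simple} ``it is enough to show that if $V$ has simple singularities, then $\Delta$ has simple singularities,'' and then carries out a long case analysis over the corank of $Q_\lambda$. If the proposition asserted what you set out to prove --- that the singularities of $B$ at $x$ and of $\Delta$ at $\lambda$ have the same type at every tame point --- then Theorem~\ref{simple} would be an immediate corollary and that entire analysis would be pointless.

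Moreover, the stronger statement you aim at is false under the stated hypotheses, so the gap you flag at the end cannot be filled. Your first suspension (completing the square in all five fibre variables) needs $M=M_\lambda|_{s=t=0}$ nondegenerate, i.e.\ $\mathrm{corank}(A_\lambda)=1$. Tameness does not give this: it constrains the span of $\{A_1x,A_2x,A_3x\}$ to be $2$-dimensional, but says nothing about $\ker A_\lambda$, which may have dimension $2$ or $3$. Such tame points do occur for stable nets defining surfaces with simple singularities --- they are exactly the cases $k=1,2$ in the paper's proof of Theorem~\ref{simple} --- and there the types genuinely differ: for corank $3$ with a generic pencil of conics on the vertex plane, $B$ has ordinary nodes $A_1$ at the base points while $\Delta$ has a $D_4$ at $\lambda$. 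By contrast, your second suspension argument is correct and uses only tameness: writing $Q_3$ in the coordinates $(u_1,u_2,x_3,x_4,x_5)$ as $g(x_3,x_4,x_5)+u_1a_1+u_2a_2$ and making the triangular change $s\mapsto s+a_1$, $t\mapsto t+a_2$ gives $F=u_1s'+u_2t'+g$, exhibiting $V$ as a rank-$4$ suspension of the local equation of $B$. That argument alone proves the proposition as actually stated (and is essentially Wall's proof). The paper's own block computation following the proposition, $F=z'^TCz'+y^T(A-U)y$ with $\Delta=\{\det(A-U)=0\}$, is the corank-sensitive substitute for your first suspension: it identifies the types of $V$ and $\Delta$ only when $k=0$, which is precisely why the higher-corank cases require separate treatment.
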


So for proving Theorem~\ref{simple} it is enough to show that if $V$ has simple singularities, then $\Delta$ has simple singularities because that if $S$ has simple singularities then $\Lambda$ is stable (\cite{OSS},\cite{Odaka1},\cite{Odaka2} and \cite{LiTian}).

\vskip 2mm
Now take coordinates such that the point $\lambda$ under investigation is at $\Lambda_1=(1,0,0)$ and $Q_1$(of corank ($k+1$)) has equation
\[ \sum_{i=k+1}^5 x_i^2=0. \]
We can take affine coordinates by setting $\lambda_1=1$, but for now we retain all the $x_i$. We partition all the matrices into blocks, separating the first $(k+1)$ rows and columns from the remaining $(5-k)$; thus
\[ M=A_1+\lambda_2 A_2+\lambda_3 A_3=
\begin{bmatrix}
A & B^T \\
B & C
\end{bmatrix}. \]
Here $A,B$ and $C-I$ are homogeneous linear in the $\lambda_2,\lambda_3.$ So $C$ is invertible at, and hence near $\Lambda_1.$ We use the identity
\[ \begin{bmatrix}
A & B^T \\
B & C
\end{bmatrix}
\begin{bmatrix}
I & 0 \\
-C^{-1}B & I
\end{bmatrix}
=\begin{bmatrix}
A-B^TC^{-1}B & B^T \\
0 & C
\end{bmatrix} \]
to compute the determinant of $M$. We find
\[ \text{det}M=\text{det}C\text{ det}(A-B^TC^{-1}B).\]
so the equation of $\Delta$ can be written $0=\text{det}(A-U)$, where $U=B^TC^{-1}B.$

To study $V$, we partition the coordinate vector $x^T=(y^T,z^T)$ correspondingly. Then
\[ F(\lambda,x)=x^T M x=y^T A y+2y^T B^T z+z^T C z;\]
for fixed $y$ (and small $\lambda$) this is a nonsingular quadratic in $z$, with centre $x=-C^{-1}By.$ Setting $z=z'-C^{-1}By$, we have
\[ F(\lambda,x)=z'^T C z'+y^T A y-y^T B^T C^{-1} B y=z'^T C z'+y^T (A-U) y, \]
and the singularity of this has the same type as that of $y^T(A-U)y.$

In all, the singularity of $\Delta$ is given by $0=\text{det}(A-U)$, those of $V$ have the same type as those of $0=y^T(A-U)y$.

\begin{rmk}
While $A$ is homogeneous linear in $\lambda_2,\lambda_3$, all the terms in $U$ have order $\ge 2$. Indeed, the terms of degree $2$ in $U$ are given by $B^T B$. The tangent cone of $\Delta$ at $\Lambda_1$ is given by the terms of lowest degree in det$(A-U)$, i.e. by det$(A)$.
\end{rmk}

We can now begin the proof of Theorem~\ref{simple}.

\vskip 2mm
\begin{proof}[Proof of Theorem~\ref{simple}]
Consider the projection $S(V)\to \Delta$.
If $\Lambda_1$ is an isolated singularity of $\Delta$, the corank $(k+1)$ of $Q_1$ must satisfy $k\le 2$. The results for $k=0$ are contained in Theorem 1.4. of \cite{W2}.

Now let $k=1$. The net cut on the vertex $S(Q_1)$ (a projective line) is spanned by two forms, so can be reduced to one of the normal forms
\[ {\rm(1)} \ \lambda x^2+\mu y^2 \ \ \ {\rm(2)} \ 2\lambda xy+\mu x^2 \ \ \ {\rm(3)} \ 2\lambda xy \ \ \ {\rm(4)} \ \lambda x^2 \ \ \ {\rm(5)} \ 0, \]
with respective discriminants
\[ {\rm(1)} \ \lambda \mu \ \ \ {\rm(2)} \ -\lambda^2 \ \ \ {\rm(3)} \ -\lambda^2 \ \ \ {\rm(4)} \ 0 \ \ \ {\rm(5)} \ 0. \]

The corresponding singularities of $V$ (or $B$) are the base points of this system. For case (5) we get the whole line as a non-isolated singularity: this we will not discuss further. In case (2),(4) we get one point $Y$ $(x=0)$; in case (3) two points $X$ $(y=0)$ and $Y$, and in case (1) no singularities.

Since the above discriminant is the tangent cone to $\Delta$ at $\Lambda_1$, we have a singularity of type $A_1$ in case (1), a higher double point $A_n$ in cases (2) and (3) (cf. Theorem 3.2. and Theorem 3.3 in \cite{W2}).

In case (4), we can normalize
\[ A= \begin{bmatrix}
\lambda & 0 \\
0 & 0
\end{bmatrix};\]
we also write
\[ U= \begin{bmatrix}
u & v \\
v & w
\end{bmatrix}.\]
Then $\Delta$ is given (locally) by
\[ (u-\lambda)w=v^2\]
and we may consider $V$ as given (near $Y$) by
\[ (u-\lambda)x^2+2vx+w=0 \]
in affine coordinates $(y=1)$.

Here we must first look at the terms of degree $2$ in $w$, giving a homogeneous quadratic $q$ in $\lambda,\mu$. If $q$ has distinct factors then $V$ has a singularity of corank $1$ and $\Delta$ one belonging to the $D$-series. If $q$ has a repeated root which is not $\lambda$, then $\Delta$ has a singularity of type $D_m$ for some $m$. If $q$ is a multiple of $\lambda^2$, $\Delta$ still has a triple point (of higher type) and $V$ a corank $2$ singularity. But the singularity of $V$ is a simple (actually $D_m$) only if the coefficient of $\mu^2$ in $v$ does not vanish. And in this case $\Delta$ has a singularity of type $E_6$.

Finally, let $k=2$. The vertex of the quadric $Q_1$ is a (projective) plane, and the net cuts a pencil of conics (defined by $Q_2$ and $Q_3$) in this plane. The corresponding points of $S(V)$ are the intersections of these conics: we wish these to be isolated. We may thus have a singular pencil of type $(x^2, y^2)$ or one of the five types of nonsingular pencil.

The tangent cone to $\Delta$ at $\Lambda_1$ is given by the discriminant of the pencil. If this has three distinct factors, $\Lambda_1$ has type $D_4$.

If the discriminant of the pencil has one repeated point and one other, $\Lambda_1$ has type $D_n$ for some $n$. We can take the pencil in the form
\[\mu(x_1^2+2\alpha x_0x_1)+\nu(2x_0x_2). \]
i.e. \[ A= \begin{bmatrix}
0 & \alpha \mu & \nu \\
\alpha \mu & \mu & 0 \\
\nu & 0 & 0
\end{bmatrix}.\]
And we write
\[ U= \begin{bmatrix}
a & b & c \\
b & d & e \\
c & e & f
\end{bmatrix}.\]
Then $\Delta$ is given (locally) by
\begin{equation*}
\begin{aligned}
0=&-cd^2+2bce-ae^2-b^2f+adf+c^2\mu-2\alpha ce\mu-af\mu+2\alpha bf\mu \\
&+2cd\nu-2be\nu-\alpha^2f\mu^2-2c\mu\nu+2\alpha e\mu\nu-d\nu^2+\mu\nu^2.
\end{aligned}
\end{equation*}
Since the 3-jet of $\Delta$ is $\mu\nu^2$, $\Lambda_1$ has type $D_n$ for some $n$.

If the discriminant of the pencil has a threefold point, we take the pencil as
\[ \mu(x_1^2-x_0x_2)+\nu(x_0^2+2\alpha x_0x_1). \]
i.e. \[ A= \begin{bmatrix}
\nu & \alpha \nu & \frac{-\mu}{2} \\
\alpha \nu & \mu & 0 \\
\frac{-\mu}{2} & 0 & 0
\end{bmatrix}.\]
And we write
\[ U= \begin{bmatrix}
a & b & c \\
b & d & e \\
c & e & f
\end{bmatrix}.\]
Then the corresponding singular points on $V$ are $(x_0,x_1,x_2)=(0,0,1)$ and $(4\alpha^2,-2\alpha,1)$ (if $\alpha \neq 0$).

$\Delta$ is given (locally) by
\begin{equation*}
\begin{aligned}
0=&-c^2d+2bce-ae^2-b^2f+adf+c^2\mu-cd\mu+be\mu-af\mu-2\alpha ce\nu \\
&+e^2\nu+2\alpha bf\nu-df\nu+c\mu^2-\frac{1}{4}d\mu^2-\alpha e\mu\nu+f\mu\nu-\alpha^2f\nu^2+\frac{1}{4}\mu^3.
\end{aligned}
\end{equation*}
and we may consider $V$ as given by
\[ 0=(a-\nu)x_0^2+2(b-\alpha\nu)x_0x_1+2(c+\frac{\mu}{2})x_0+(d-\mu)x_1^2+2ex_1+f \]
in affine coordinates $(x_2=1)$.

Let $F=(a-\nu)x_0^2+2(b-\alpha\nu)x_0x_1+2(c+\frac{\mu}{2})x_0+(d-\mu)x_1^2+2ex_1+f$ and write
\[\begin{array}{l}
c=l\mu^2+m\mu\nu+n\nu^2+c_{\ge3}\\
e=i\mu^2+j\mu\nu+k\nu^2+e_{\ge3}\\
f=s\mu^2+t\mu\nu+r\nu^2+u\nu^3+f_{\ge3}
\end{array}\]

Suppose $\alpha\neq 0$. We consider the following cases:
\begin{equation*}
\begin{aligned}
&\rm{I}. \ && r\neq 0 ; \\
&\rm{II}. \ && r=0 \text{ and } t\neq 0; \\
&\rm{III}. \ && r=0 \text{ and } t=0.
\end{aligned}
\end{equation*}

$\odot$\textbf{Case I:} $r\neq 0$.
Then the coefficient of $\nu^4$ comes from $-\alpha^2f\nu^2$ and is nonzero, so $\Lambda_1$ has $E_6$ type.

$\odot$\textbf{Case II:} $r=0$ and $t\neq 0$.
Then the coefficient of $\mu\nu^3$ comes from $-\alpha e\mu\nu-\alpha^2f\nu^2$ and is $-\alpha k-\alpha^2t$. So $\Lambda_1$ has $E_7$ type if $k\neq -\alpha t$.
If $k=-\alpha t$, The coefficient of $\nu^5$ comes from $-2\alpha ce\nu-e^2\nu-\alpha^2f\nu^2$ and is $-2\alpha nk+k^2-\alpha^2u$. So $\Lambda_1$ has $E_8$ type if $-2\alpha nk+k^2-\alpha^2u \neq 0$.

Now let $k=-\alpha t$ and $-2\alpha nk+k^2-\alpha^2u=0$. Then $\Lambda_1$ is not simple. So we observe the singularity of $V$.

There exists an automorphism $\varphi$ of $\mathbb{C}[[x_0,x_1,\mu,\nu]]$ such that $\varphi(F)=x_0^2+x_1^2+G(\mu,\nu)$ with $G\in <\mu,\nu>^3$. Transform $F$ by
\begin{equation*}
T=
\left\{\begin{array}{rl}
\begin{bmatrix}
0 & -2\sqrt{-s} & -t & 0 \\
0 & 0 & 0 & 1 \\
\frac{1}{\sqrt{s}} & -\frac{1}{\sqrt{-s}} & 0 & 0 \\
0 & 0 & 1 & 0
\end{bmatrix} \text{ if } s\neq 0\\
\begin{bmatrix}
1 & -\sqrt{-1} & -t & 0 \\
0 & 0 & 0 & 1 \\
1 & \sqrt{-1} & 0 & 0 \\
0 & 0 & 1 & 0
\end{bmatrix} \text{ if } s=0
\end{array}\right.
\end{equation*}
Then we can assume that
\[ F=x_0^2+x_1^2+F_{\ge3}(\mu,\nu)+\sum_{i=0,1}x_iG_i(x_0,x_1,\mu,\nu) \]
with $G_i\in <x_0,x_1,\mu,\nu>^2$. The coordinate change $x_i\mapsto x_i-\frac{1}{2}G_i$ for $i=0,1$, $\mu\mapsto\mu$ and $\nu\mapsto\nu$ yields
\[ F=x_0^2+x_1^2+F_{\ge3}(\mu,\nu)+F_{\ge4}(\mu,\nu)+\sum_{i=0,1}x_iH_i(x_0,x_1,\mu,\nu) \]
with $H_i\in <x_0,x_1,\mu,\nu>^3$. Continuing with $H_i$ instead of $G_i$ in the same manner, the last sum will be of arbitrary high order, hence $0$ in limit.

The $3$-jet of $G(\mu,\nu)$ is
\[\begin{array}{l}
-t^2\mu^3+2\alpha t\mu^2\nu-2nt\mu^3+2k\mu^2\nu+u\mu^3\\
=(-t^2-2nt+u)\mu^3+(2\alpha t+2k)\mu^2\nu \\
=0 \ \ (\text{because } k=-\alpha t \text{ and } -2\alpha nk+k^2-\alpha^2u=0).
\end{array}\]
So the singular point of $V$ is not simple.

$\odot$\textbf{Case III:} $r=0$ and $t=0$.
Then the coefficient of $\mu\nu^3$ comes from $-\alpha e\mu\nu$ and is $-\alpha k$. So $\Lambda_1$ has $E_7$ type if $k \neq 0$.
If $k=0$, The coefficient of $\nu^5$ comes from $-\alpha^2f\nu^2$ and is $-\alpha^2u$. So $\Lambda_1$ has $E_8$ type if $u \neq 0$.

Now let $k=0$ and $u=0$. Then $\Lambda_1$ is not simple. So we observe the singularity of $V$.

Same way as above, there exists an automorphism $\varphi$ of $\mathbb{C}[[x_0,x_1,\mu,\nu]]$ such that $\varphi(F)=x_0^2+x_1^2+G(\mu,\nu)$ with $G\in <\mu,\nu>^3$ and the $3$-jet of $G(\mu,\nu)$ is zero. Thus the singular point of $V$ is not simple.

\vskip 1mm
Now suppose $\alpha=0$. We consider the following cases:
\begin{equation*}
\begin{aligned}
&\rm{I}. \ && r\neq 0 ; \\
&\rm{II}. \ && r=0 \text{ and } k\neq 0; \\
&\rm{III}. \ && r=0 \text{ and } k=0.
\end{aligned}
\end{equation*}

$\odot$\textbf{Case I:} $r\neq 0$.
Then the coefficient of $\mu\nu^3$ comes from $f\mu\nu$ and is nonzero, so $\Lambda_1$ has $E_7$ type.

$\odot$\textbf{Case II:} $r=0$ and $k\neq 0$.
Then the coefficient of $\nu^5$ comes from $e^2\nu$ and is $k^2$. So $\Lambda_1$ has $E_8$.

$\odot$\textbf{Case III:} $r=0$ and $k=0$.
Then $\Lambda_1$ is not simple. So we observe the singularity of $V$. Again there exists an automorphism $\varphi$ of $\mathbb{C}[[x_0,x_1,\mu,\nu]]$ such that $\varphi(F)=x_0^2+x_1^2+G(\mu,\nu)$ with $G\in <\mu,\nu>^3$. We will show $G\in <\mu,\nu^2>^3$, then the singular point of $V$ is not simple.

After transform $F$ by $T$ we can assume that
\[ F=x_0^2+x_1^2+F_{\ge3}(\mu,\nu)+\sum_{i=0,1}x_iG_i(x_0,x_1,\mu,\nu) \]
with $G_i\in <x_0,x_1,\mu,\nu>^2$. The coordinate change $x_i\mapsto x_i-\frac{1}{2}G_i$ for $i=0,1$, $\mu\mapsto\mu$ and $\nu\mapsto\nu$ yields
$$
\begin{aligned}
F=&\;x_0^2+x_1^2+F_{\ge3}(\mu,\nu)-x_0G_0-x_1G_1+\frac{1}{4}G_0^2+\frac{1}{4}G_1^2\\
&+(x_0-\frac{1}{2}G_0)\cdot G_0(x_0-\frac{1}{2}G_0,x_1-\frac{1}{2}G_1,\mu,\nu)\\
&+(x_1-\frac{1}{2}G_1)\cdot G_1(x_0-\frac{1}{2}G_0,x_1-\frac{1}{2}G_1,\mu,\nu)
\end{aligned}
$$
Let
\[\begin{array}{l}
G_0:=G_0(x_0,x_1,\mu,\nu)=g_1+x_0g_2+x_1g_3+q+x_0^2g_4+x_0x_1g_5+x_1^2g_6,\\
G_1:=G_1(x_0,x_1,\mu,\nu)=h_1+x_0h_2+x_1h_3+q'+x_0^2h_4+x_0x_1h_5+x_1^2h_6
\end{array}\]
with $g_1,h_1\in <\mu,\nu>^2,g_2,g_3,h_2,h_3\in <\mu,\nu>,q,q'$ is quadratic in $x_0,x_1$ and $g_4,g_5,g_6,h_4,h_5,h_6\in <x_0,x_1,\mu,\nu>$.

Then
\[\begin{array}{l}
(x_0-\frac{1}{2}G_0)\cdot G_0(x_0-\frac{1}{2}G_0,x_1-\frac{1}{2}G_1,\mu,\nu)\\
=x_0G_0-\frac{1}{2}G_0^2+\frac{1}{4}G_0^2g_2+\frac{1}{4}G_0G_1g_3+x_0g'+x_1g''
\end{array}\]
and
\[\begin{array}{l}
(x_1-\frac{1}{2}G_1)\cdot G_1(x_0-\frac{1}{2}G_0,x_1-\frac{1}{2}G_1,\mu,\nu)\\
=x_1G_1-\frac{1}{2}G_1^2+\frac{1}{4}G_0G_1h_2+\frac{1}{4}G_1^2h_3+x_0h'+x_1h''
\end{array}\]
with $g',g'',h',h''\in <x_0,x_1,\mu,\nu>^3$. So
\[\begin{aligned}
F=&\;x_0^2+x_1^2+F_{\ge3}-\frac{1}{4}G_0^2-\frac{1}{4}G_1^2+\frac{1}{4}G_0^2g_2+\frac{1}{4}G_0G_1g_3+\frac{1}{4}G_0G_1h_2\\
&+\frac{1}{4}G_1^2h_3+F_{\ge6}+x_0\tilde{G_0}+x_1\tilde{G_1}
\end{aligned}\]
with $F_{\ge6}\in <x_0,x_1,\mu,\nu>^6$ and $\tilde{G_0},\tilde{G_1}\in <x_0,x_1,\mu,\nu>^3$.

\vskip 1mm
Continuing this process, we can obtain
\[
F=x_0^2+x_1^2+F_{\ge3}-\frac{1}{4}g_1^2-\frac{1}{4}h_1^2+\frac{1}{4}g_1^2g_2+\frac{1}{4}g_1h_1g_3+\frac{1}{4}g_1h_1h_2+\frac{1}{4}h_1^2h_3+\tilde{F}_{\ge6},
\]
where $\tilde{F}_{\ge6}\in <x_0,x_1,\mu,\nu>^6$.

\vskip 1mm
Thus the $3$-jet comes from
$$
F_{\ge3},
$$
$\nu^4$ term and $\mu \nu^3$ term comes from
$$
F_{\ge3}-\frac{1}{4}g_1^2-\frac{1}{4}h_1^2
$$
and $\nu^5$ term comes from
$$
F_{\ge3}-\frac{1}{4}g_1^2-\frac{1}{4}h_1^2+\frac{1}{4}g_1^2g_2+\frac{1}{4}g_1h_1g_3+\frac{1}{4}g_1h_1h_2+\frac{1}{4}h_1^2h_3.
$$

\vskip 2mm
Now, recall that $V$ is given by
\[ F=(a-\nu)x_0^2+2b-x_0x_1+2(c+\frac{\mu}{2})x_0+(d-\mu)x_1^2+2ex_1+f \]
in affine coordinates $(x_2=1)$.
Transform $F$ by $T$ and consider $3$-jet, $\nu^4$ term, $\mu \nu^3$ term and $\nu^5$ term.

If $s\ne 0$, by using
\begin{equation*}
T=\begin{bmatrix}
0 & -2\sqrt{-s} & -t & 0 \\
0 & 0 & 0 & 1 \\
\frac{1}{\sqrt{s}} & -\frac{1}{\sqrt{-s}} & 0 & 0 \\
0 & 0 & 1 & 0
\end{bmatrix}
\end{equation*}
the $3$-jet of $F_{\ge3}$ is
$(-t^2-2nt+u)\mu^3$ and so $F_{\ge3}\in <\mu,\nu^2>^3$.

Also we can obtain the followings:

$x_0\nu^2$ term of $x_0g_1$ is $-\frac{1}{\sqrt{s}}x_0\nu^2$ and so $\nu^2$ term of $g_1$ is $-\frac{1}{\sqrt{s}}\nu^2$,

$x_1\nu^2$ term of $x_1h_1$ is $\frac{1}{\sqrt{-s}}x_1\nu^2$ and so $\nu^2$ term of $h_1$ is $\frac{1}{\sqrt{-s}}\nu^2$,

$x_0\mu \nu$ term of $x_0g_1$ is $\frac{j}{\sqrt{s}}x_0\mu \nu$ and so $\mu \nu$ term of $g_1$ is $\frac{j}{\sqrt{s}}\mu \nu$,

$x_1\mu \nu$ term of $x_1h_1$ is $-\frac{j}{\sqrt{-s}}x_1\mu \nu$ and so $\mu \nu$ term of $h_1$ is $-\frac{j}{\sqrt{-s}}\mu \nu$,

$x_0^2\nu$ term of $x_0^2g_2$ is $\frac{i}{3}x_0^2\nu$ and so $\nu$ term of $g_2$ is $\frac{i}{3}\nu$,

$x_1^2\nu$ term of $x_1^2h_3$ is $-\frac{i}{3}x_1^2\nu$ and so $\nu$ term of $h_3$ is $-\frac{i}{3}\nu$,

$x_0x_1\nu$ term of $x_0x_1(g_3+h_2)$ is $-\frac{2i}{3\sqrt{-1}}x_0x_1\nu$ and so $\nu$ term of $g_3+h_2$ is $-\frac{2i}{3\sqrt{-1}}\nu$.

Thus $\nu^4$ term, $\mu \nu^3$ term and $\nu^5$ term are $0$.
In all, $G\in <\mu,\nu^2>^3$.

In case $s=0$, by using
\begin{equation*}
T=\begin{bmatrix}
1 & -\sqrt{-1} & -t & 0 \\
0 & 0 & 0 & 1 \\
1 & \sqrt{-1} & 0 & 0 \\
0 & 0 & 1 & 0
\end{bmatrix},
\end{equation*}
we can similarly show $G\in <\mu,\nu^2>^3$.
\end{proof}

\medskip

{\em Acknowledgements}. This work will be contained in part of my Ph.D. thesis. I would like to thank my advisor Yongnam Lee, for his advice, encouragement and teaching and the author is grateful to Brendan Hassett and Yuji Odaka for useful comments. And this work was supported by Basic Science Program through the National Research Foundation of Korea funded by the Korea government(MSIP)(No.2013006431).

\bigskip

\begin{small}\end{small}

\end{document}